\documentclass [10pt]{amsart}

\usepackage{amsmath}
\usepackage{amsfonts}
\usepackage{amstext}
\usepackage{amsbsy}
\usepackage{amsopn}
\usepackage{amsxtra}
\usepackage{upref}
\usepackage{amsthm}
\usepackage{amsmath}
\usepackage{amssymb}
\usepackage{tikz}
\usetikzlibrary{automata,positioning}
\usepackage{enumitem}
\usepackage{epsfig,verbatim,ifthen,graphicx}
\usepackage[all]{xy}
\usepackage{epsfig,verbatim,ifthen,graphicx}





\newtheorem{prop}{Proposition}[section]
\newtheorem{lemma}{Lemma}[section]
\newtheorem{thm}{Theorem}[section]
\newtheorem{ex}{Example}[section]
\newtheorem{coro}{Corollary}[section]

\theoremstyle{definition}
\newtheorem{defi}{Definition}[section]
\newtheorem{rem}{Remark}[section]
\usepackage{enumitem}


\setlength{\baselineskip}{13pt}

\def\R{{\mathbb R}}

\def\N{{\mathbb N}}

\def\F{{\mathcal F}}

\def\G{{\mathcal G}}
\def\H{{\mathcal H}}

\begin{document}
\title
{On Gibbs measures for almost additive sequences associated to some relative pressure functions} 

\subjclass[2000]{37D35, 37B10}
\keywords{Symbolic dynamical systems, thermodynamic formalism, relative pressure,  almost additive potentials, 
Gibbs measures}

\author{Yuki Yayama}
\address{Centro de Ciencias Exactas, 
Departamento de Ciencias B\'{a}sicas,  Universidad del B\'{i}o-B\'{i}o, Avenida Andr\'{e}s Bello 720, Casilla 447, Chill\'{a}n, Chile}

\begin{abstract}
Given a 
weakly almost additive sequence of continuous functions with bounded variation $\F=\{\log f_n\}_{n=1}^{\infty}$ on a subshift $X$ over finitely many symbols, we study properties of a function $f$ on $X$ such that $\lim_{n\to\infty}\frac{1}{n}\int \log f_n d\mu$
\\$=\int f d\mu$  for every invariant measure $\mu$ on $X$. Under some conditions we construct a function $f$ on $X$ explicitly and study a relation between the property of $\F$ and some particular types of $f$.  As applications we study images of Gibbs measures for continuous functions under one-block factor maps. 
We  investigate a relation between the almost additivity of the sequences associated to relative pressure functions and the fiber-wise sub-positive mixing property of a factor map.
For a special type of one-block factor maps between shifts of finite type, we study  necessary and sufficient conditions for the image of a one-step Markov measure to be a Gibbs measure for a continuous function.\\




\end{abstract}
\maketitle
\section{Introduction}\label{intro}
The thermodynamic formalism for sequences of continuous functions has been developed
in connection to the study of dimension problems. 
In particular, 
Barreira \cite{b2} and Mummert \cite {m} introduced almost additive sequences which generalize continuous functions and developed the formalism for such sequences. More generally, asymptotically additive sequences which generalize almost additive sequences were introduced by Feng and Huang \cite{FH}. Cuneo \cite{Cu} showed that if $\F=\{\log f_n\}_{n=1}^{\infty}$ is asymptotically additive (see Section \ref{back} for the definition), then an equilibrium state for $\F$ is an equilibrium state for a continuous function.

Let $(X, \sigma_X)$ be a subshift over finitely many symbols.
If an almost additive sequence of continuous functions $\F$ on $X$ with the weak specification property has bounded variation, then it has a unique equilibrium state which is also a unique invariant Gibbs measure for $\F$  \cite{b2, m}.  Since an almost additive sequence $\F=\{\log f_n\}_{n=1}^{\infty}$  on $X$ is asymptotically additive, there exists $\hat f\in C(X)$ such that  
\begin{equation}\label{cuneothm1}
\lim_{n\rightarrow \infty} \frac{1}{n} 
\lVert \log f_{n}-S_n \hat f \rVert_{\infty}=0,
\end{equation}
where $(S_n \hat f)(x):=\sum_{i=0}^{n-1}\hat f(\sigma_X^{i}(x))$ and $\vert\vert \cdot\vert \vert_{\infty}$ is the supremum norm \cite{Cu}. Let $M(X, \sigma_X)$ the set of all $\sigma_X$-invariant Borel probability measures on $X$. Then 
\begin{equation} \label{propas}
\lim_{n\to \infty} \frac{1}{n}\int \log f_n dm=\int \hat f dm \text{ for every } m\in M(X, \sigma_X) \end{equation}
and $\F$ and $\hat f$ have the same equilibrium states.
If an asymptotically additive sequence $\F$ has an equilibrium state $\mu$ which is Gibbs for $\F$ and $\mu$ is also Gibbs for $\hat f\in C(X)$ in (\ref {cuneothm1}) 
then the sequence $\F$ is almost additive. 
The question we pose is as follows. 

[Q1] What form does $\hat f$ take when $\F$ is (weakly) almost additive?  What are the properties of $\hat f$? We note that such an $\hat f$ is not unique. 

For this purpose, we first investigate a natural form of a Borel measurable function $\hat f$ on $X$ satisfying (\ref{cuneothm1}). We extend the notion of the (weak) Gibbs measure for a continuous function to a Borel measurable function (see Section \ref{back}). 

In Theorem \ref{go1}, under certain conditions we construct a Borel measurable function  $\hat f$  satisfying (\ref{cuneothm1}) explicitly and characterize the Gibbs and weak Gibbs properties of equilibrium states for such functions $\hat f$ by some properties of the sequences $\F$. 
For an almost additive sequence $\F$ with bounded variation with a unique equilibrium state $\mu$, under certain conditions, 
we find a function $\hat f$ satisfying (\ref{cuneothm1}) for which  $\mu$ is  a unique invariant Gibbs measure. 
If $\F$ is merely weakly almost additive and has an equilibrium state $\mu$ which is Gibbs for $\F$ (such a case does exist, see Remark \ref{obs}), then we find a function $\hat f$ satisfying (\ref{cuneothm1}) such that $\mu$ is an equilibrium state  for $\hat f$ and is an invariant weak Gibbs measure for $\hat f$.
In Theorem  \ref{cando1}, by studying a unique invariant Gibbs measure for a (weakly) almost additive additive sequence we obtain a similar result. 

In Section \ref{apli}, 
we study questions on factors of Gibbs measures. Given a one-block factor map  $\pi: X\to  Y$  between subshifts and  a Gibbs measure $\mu\in M(X,\sigma_X)$ for $f\in C(X)$,  we are interested in  studying  some properties of the push-forward measure $\pi\mu$ (which we call an image or a factor of $\mu$). Many studies have been conducted on the question on factors of Gibbs measures (see for example, \cite{CU1, CU2, PK, K, Yo, Pi, Pi2}).  In particular, Yoo \cite{Yo} showed that the fiber-wise sub-positive mixing property of factor maps  (Definition \ref{subp}) 
is a sufficient condition for a factor map to send all Markov measures on a topologically mixing shift of finite type to Gibbs measures and the result was extended by Piraino \cite{Pi2} to the Gibbs measures associated to continuous functions. However, 
the fiber-wise sub-positive mixing property is not a necessary condition 
(see Proposition \ref{aafiber}).

In this paper, we study
necessary and sufficient conditions for  $\pi\mu$  to be Gibbs for a continuous (or more generally Borel measurable) function under a particular setting, by an explicit construction of a function.  We also consider the case when the image is weak Gibbs for a continuous function.
To this end, we use the non-additive thermodynamic formalism, studying  subadditive sequences $\G=\{\log g_n\}_{n=1}^{\infty}$ associated to relative pressure functions (see for example \cite{Fe1, Y2, Y3}) and apply Theorem \ref{go1}. 

 Let $(X, \sigma_X), (Y, \sigma_Y)$ be subshifts and  
$\pi:X\rightarrow Y$ be a factor map.
Let $f\in C(X)$, $n\in \N$ and $\delta>0$. For each $y\in Y$, define
\begin{eqnarray}\label{relatived}
\begin{split}
P_n(\sigma_X, \pi, f, \delta)(y)&=\sup\{\sum_{x\in E}e^{(S_nf)(x)}:E \text { is an } (n, \delta) \text{ separated subset of } \pi^{-1}(\{y\})\},
\\
&P(\sigma_X, \pi, f, \delta)(y)=\limsup_{n\rightarrow \infty}\frac{1}{n}\log P_{n}(\sigma_X, \pi, f,\delta)(y),\\
&P(\sigma_X, \pi, f)(y)=\lim_{\delta\rightarrow 0}P(\sigma_X, \pi,f, \delta)(y).
\end{split}
\end{eqnarray}
The function $P(\sigma_X, \pi, f):Y\rightarrow \R$, introduced by Ledrappier and Walters \cite{LW}, 
is the {\em relative pressure} function of $f\in C(X)$ with respect to $\pi$. 
In general it is merely Borel measurable. 
We have
$P(\sigma_X, \pi, f) =\lim_{n\rightarrow\infty}(1/n)\log g_n$ 
almost everywhere with respect to every $\mu \in M(Y, \sigma_Y)$, 
where $g_n$ is defined in (\ref{beq}). 
The sequence $\G=\{\log g_n\}_{n=1}^{\infty}$ on $Y$  is subadditive in general and is the {\em sequence associated to the relative pressure function $P(\sigma_X, \pi, f)$}.

[Q1] and questions concerning factors of Gibbs measures are related.
Section \ref{apli} and the rest of the paper is devoted to finding an explicit form for $\hat f$ satisfying (\ref{cuneothm1}) when $\F$ is $\G$. 
In Section \ref{fiberaaR} we consider the following type of factor maps to study the equivalent conditions for an images of the Gibbs measure associated to a continuous function to be Gibbs for a continuous function. Let $\pi: X\to Y$ be a one-block factor map between irreducible shifts of finite type where 
$X\subseteq \{1,2,3\}^{\N}$ and $Y\subseteq\{1,2\}^{\N}$ such that $Y$ is the full shift or has the transition matrix $\begin{pmatrix}
0 & 1 \\
1 & 1 
\end{pmatrix}$, satisfying $\pi^{-1}\{1\}=\{1\}$. The images of measures of maximal entropy under the factor maps of this type were originally studied in \cite{S1, Y1}. In this setting, by Theorem \ref{go1} with some computations, we can find an explicit form for the desired function.
For  a function of two coordinates $f\in C(X)$, 
in Theorem \ref{phd1} and Corollary \ref{phd2}, we study necessary and sufficient conditions for the image of a
Gibbs measure $\mu$ associated to $f$ to be a Gibbs measure for a continuous function  by using the almost additivity of the sequence $\G$ associated to 
$P(\sigma_X, \pi, f)$.  Hence the results are applied to Markov measures $\mu$.  
This enables us to answer [Q1] for some cases. 
Given a factor map $\pi$ and $f\in C(X)$ described above, set $\F=\G$ and $f_n=g_n$ in (\ref{cuneothm1}). 
A function  $\hat f$ obtained from our approach is continuous on $Y$ except for a particular case which 
is left for future study (see Corollary \ref{best} and Lemma \ref{exception}).

Section 5 consists of the proof of Theorem \ref{phd1}. We apply some ideas from \cite{CU1, Yo} to connect $\G$ with the sum of all entries of product of certain matrices. 
To study the sum, we use real Jordan canonical forms of submatrices. 
In Section 6, we give some examples of factor maps from  \cite{S1, Y1} which illustrate Theorem \ref{go1} (Corollary \ref{aplrelative}) and Theorem \ref{phd1}. Finally, we are left with a question: Can we generalize the results (Theorems \ref{phd1}) to the images of Gibbs measures for continuous functions under a more general setting? 

We remark that in \cite{CE} given an almost additive $\F$ with bounded variation some properties of a function $\hat f$ satisfying (\ref{cuneothm1}) were studied. 
An explicit form for $\hat f$ was not studied.




\section{Background}\label{back}
\subsection{Shift spaces}
We give a brief summary of the basic definitions in symbolic dynamics. 
$(X, \sigma_X)$ is a {\em one-sided subshift} over $\{1,\dots, k\}$ if $X$ is a closed
shift-invariant subset of $\{1,\dots, k\}^{\N}$ for some $k\geq
1$, i.e., $\sigma_X(X)\subseteq X$,  where the shift 
$\sigma_X:X\rightarrow X$
is defined by $(\sigma_X(x))_{i}=x_{i+1}$ for all $i\in \N$, $x=(x_n)^{\infty}_{n=1} \in X.$
Define a metric $d$ on $X$ by $d(x,x')={1}/{2^{k}}$ if
$x_i=x'_i$ for all $1\leq i\leq k$ and $x_{k+1}\neq {x'}_{k+1}$, $d(x,x')=1$ if $x_1\neq x'_1$,
and $d(x,x')=0$ otherwise. 
Define a cylinder set $[x_1 \dots x_{n}]$ of length $n$ in $X$  by 
$[x_1\dots x_n]=\{(z_i)_{i=1}^{\infty} \in X:  z_i=x_i \text{ for all }1\leq i\leq n\}.$  For each $n \in \N,$ denote by
$B_n(X)$ the set of all $n$-blocks that appear in points in $X$.
Define $B_{0}(X)=\{\epsilon\},$ where  $\epsilon$ is the empty word of length $0$.
The language of $X$ is the set $B(X)=\cup_{n=0}^{\infty}B_n(X)$. 
A subshift $(X,\sigma_X)$ is {\it irreducible} if for any allowable words $u, v\in B(X)$, there exists  $w\in B(X)$ such that $uwv \in B(X)$, and has the {\it weak specification property} if there exists $p\in\N$ such that for any allowable words $u, v \in B(X)$, there exist  $0\leq k\leq p$ and $w\in B_{k}(X)$ such that $uwv\in B(X)$. 
A point $x\in X$ is a periodic point  of $\sigma_X$ if there exists $p\in\N$ such that $\sigma_X^{p}(x)=x$. Let 
$A=(a_{ij})$ be a $k\times k$ matrix of zeros and ones. Define $X_A$ by 
$$X_A= \left\{ (x_n)_{n =1}^{\infty} \in \{1,\dots, k\}^{\N}
: a_{x_{n}, x_{n+1}}=1  \text{ for every } n \in \N \right\}.$$
Then  $(X_A, \sigma_{X_{A}})$ is a {\em one-sided shift of finite type} and it is a subshift over $\{1,\dots, k\}$.  
It is topologically mixing if there exists $p\in \N$ such that $A^p>0$.

Let  $(X, \sigma_X)$ be a subshift over a finite set  $S_1$ and $(Y, \sigma_Y)$ be a subshift over a finite set  $S_2$. 
A map $\pi:X\rightarrow Y$ is a {\it factor map} if it is continuous, surjective and satisfies $\pi \circ \sigma_{X} = \sigma_Y\circ \pi$.  
A one-block code is a map $\pi : X\to Y$ for which there exists a function $\tilde \pi:S_1(X) \rightarrow S_2(Y)$  such that $(\pi (x))_i = \tilde \pi (x_i)$,  for all $i \in \N$.
 
 \subsection{Sequences of continuous functions.}\label{seqmany}
  Given a subshift $(X, \sigma_X)$,  for each $n\in\N$, let $f_n: X\rightarrow \R^{+}$ be a continuous function. Then $\F=\{\log f_n\}_{n=1}^{\infty}$ is a sequence of continuous functions on $X$.
A sequence $\F$ is {\em almost additive} on $X$ if there exists a constant $C> 0$ 
such that 
\begin{equation}\label{aa1}
 f_{n+m}(x) \leq e^{C}f_n(x) f_{m}(\sigma^n_X x)
\end{equation}
and 
\begin{equation}\label{aa2}
f_{n+m}(x) \geq e^{-C}f_n(x) f_{m}(\sigma^n_X x) 
\end{equation}
for every $x\in X$, $n,m\in\N$. 
More generally, a sequence $\F$ is {\em weakly  almost additive} if there exists a sequence of positive real numbers $\{C_n\}_{n=1}^{\infty}$ satisfying $\lim_{n\to \infty}(1/n)C_n=0$ such that (\ref{aa1}) and (\ref{aa2}) hold if we replace $C$
 by $C_n$ \cite {CJ}.
A sequence $\mathcal{F}$ is {\it subadditive} if $\F$ satisfies (\ref{aa1}) with $C=0$ \cite{CFH}. 
Asymptotically additive sequences were introduced in \cite{FH} and we use the following definition (see \cite{Cu}).  A sequence $\F$ is {\em asymptotically additive} on $X$ if there exists $\hat f\in C(X)$ such that (\ref{cuneothm1}) holds.
 Weakly almost additive sequences are asymptotically additive \cite[Lemma 6.2]{CJ}. 
A sequence $\mathcal{F}= \{ \log f_n \}_{n=1}^{\infty}$ has {\it bounded variation}  if there exists $M \in \R^{+}$ such that
 $\sup \{ M_n : n \in \N\} \leq M$ where
 $M_n= \sup \{ {f_n(x)}/{f_n(y)} : x,y  \in X, x_i=y_i \textrm{ for }$\\
 $1 \leq i \leq n\}.$ Given a sequence of continuous functions $\F=\{\log f_n\}_{n=1}^{\infty}$ on $X$, 
a measure $\mu\in M(X, \sigma_X)$ is an {\em equilibrium state} for $\F$ if 
\begin{equation}
h_{\mu}(\sigma_X)+\lim_{n\rightarrow\infty}\frac{1}{n}\int\log f_nd\mu
=\sup_{m\in M(X,\sigma_X)}\{h_{m}(\sigma_X)+\lim_{n\rightarrow\infty}\frac{1}{n}\int\log f_ndm\}.
\end{equation} 
A measure $\mu\in M(X, \sigma_X)$ is a {\em weak Gibbs measure} for 
$\F$ if there exist $P\in \R$ and $C_n>0$ satisfying $\lim_{n\rightarrow\infty}(1/n)\log C_n=0$ such that 
\begin{equation}\label{gibbsd}
\frac{1}{C_n}\leq \frac{\mu[x_1\dots x_n]}{e^{-nP}f_n(x)}\leq C_n
\end{equation}
for every $x\in X$ and $n\in \N$. If there exists $C>0$ such that $C_n=C$ for all $n\in\N$, then  $\mu$ is a  {\em Gibbs measure} for $\F$. 
If $\F$ is almost additive with bounded variation on a subshift with the weak specification property,  then $\F$ has a unique equilibrium state and it is the unique invariant Gibbs measure for $\F$ (see \cite{b2, m, Fe1}).  We note that if $\mu$ is a weak Gibbs measure for an asymptotically additive sequence $\F$, then the variational principle of topological pressure holds and $P$ in (\ref{gibbsd}) is given by the topological pressure of $\F$ (see for example \cite{FH}).
Similarly, given a Borel measurable function $g$ on $X$, a measure $\mu\in M(X, \sigma_X)$ is an {\em equilibrium state} for $g$ if 
$h_{\mu}(X)+\int g d\mu= \sup_{m\in M(X,\sigma_X)}\{h_{m}(\sigma_X)+\int gdm\}.$ 
A measure $\mu\in M(X, \sigma_X)$ is a {\em weak Gibbs measure} for $g$ if there exist $P\in \R$ and $C_n>0$ such that 
(\ref{gibbsd}) holds if we replace $f_n(x)$ by $e^{(S_ng)(x)}$  and it is a {\em Gibbs measure} for $g$ if $C_n=C$ for all $n\in \N$ for some $C>0.$ 

\section{Gibbs measures for weakly almost additive sequences}\label{resu}
In this section, given an almost additive sequence $\F$ of continuous functions on a subshift with the weak specification property, we construct a Borel measurable function $\hat f$ satisfying (\ref{cuneothm1}) under certain conditions and study the Gibbs property of the unique equilibrium state for $\hat f$ when $\F$ has bounded variation. We also consider the case when  $\F$ is weakly almost additive. 
We first start with a simple lemma. 

\begin{lemma} \label{bounded}
Let $(X,\sigma_X)$ be a subshift with the weak specification property and $\F=\{\log f_n\}_{n=1}^{\infty}$ be a sequence of continuous functions on $X$. 
Suppose that there exist a Borel measurable function $f$ on $X$ and  a sequence of positive real numbers $\{A_k\}_{k=1}^{\infty}$ satisfying $\lim_{k\to \infty}(1/k)\log A_k=0$ such that 
\begin{equation}\label{simpimp}
\frac{1}{A_k}\leq \frac{f_k(x)}{e^{(S_kf)(x)}}\leq A_k \text{ for every } x\in X, k\in \N.
\end{equation}
Then  the following hold.
\begin{enumerate}[label=(\roman*)]
\item \label{wp1} If $\mu$  is a Gibbs measure for $\F$, then it is a weak Gibbs measure for $f$.
\item \label{sp2} If there exists a constant $A>0$ such that $A_k=A$ for every $k\in \N$, then 
 $\mu$  is a Gibbs measure for $\F$ if and only if it  is  a Gibbs measure for $f$. \end{enumerate}
\end{lemma}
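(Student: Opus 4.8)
The plan is to treat both parts as a direct chaining of the comparison hypothesis (\ref{simpimp}) with the defining inequalities for the (weak) Gibbs property, keeping the pressure constant $P$ unchanged throughout. The key observation is that (\ref{simpimp}) is equivalent, after taking reciprocals, to the two-sided bound $1/A_k \le e^{(S_kf)(x)}/f_k(x) \le A_k$, so that both the ratio $f_n(x)/e^{(S_nf)(x)}$ and its reciprocal are squeezed between $1/A_n$ and $A_n$. Multiplying the Gibbs ratio for $\F$ by one of these factors converts it into the corresponding ratio for $f$, and conversely.

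For part \ref{wp1}, I would start from the assumption that $\mu$ is a Gibbs measure for $\F$, so there exist $P\in\R$ and $C>0$ with $1/C \le \mu[x_1\dots x_n]/(e^{-nP}f_n(x)) \le C$ for all $x$ and $n$. Writing
\[
\frac{\mu[x_1\dots x_n]}{e^{-nP}e^{(S_nf)(x)}}=\frac{\mu[x_1\dots x_n]}{e^{-nP}f_n(x)}\cdot\frac{f_n(x)}{e^{(S_nf)(x)}}
\]
and bounding the first factor by $C$ (resp.\ $1/C$) and the second by $A_n$ (resp.\ $1/A_n$) via (\ref{simpimp}) yields $1/(CA_n)\le \mu[x_1\dots x_n]/(e^{-nP}e^{(S_nf)(x)}) \le CA_n$. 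Setting $D_n:=CA_n$ gives the weak Gibbs inequality for $f$ with the same $P$, and $\lim_{n\to\infty}(1/n)\log D_n = \lim_{n\to\infty}(1/n)(\log C+\log A_n)=0$, since the first term vanishes and the second does by the hypothesis on $\{A_k\}$.

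For part \ref{sp2}, when $A_k\equiv A$ the computation above produces the \emph{constant} bound $D_n=CA$, so $\mu$ being Gibbs for $\F$ forces it to be Gibbs for $f$. For the converse I would run the same decomposition in the other direction: assuming $\mu$ is Gibbs for $f$ with constant $C$, the identity
\[
\frac{\mu[x_1\dots x_n]}{e^{-nP}f_n(x)}=\frac{\mu[x_1\dots x_n]}{e^{-nP}e^{(S_nf)(x)}}\cdot\frac{e^{(S_nf)(x)}}{f_n(x)}
\]
together with the reciprocal bound $1/A\le e^{(S_nf)(x)}/f_n(x)\le A$ gives $1/(CA)\le \mu[x_1\dots x_n]/(e^{-nP}f_n(x))\le CA$, which is exactly the Gibbs inequality for $\F$ with constant $CA$.

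There is no substantive obstacle here: the statement is purely a consequence of the two-sided estimate (\ref{simpimp}), and the only points to verify are that the new constant sequence retains the required subexponential growth (immediate in part \ref{wp1}, and automatic in part \ref{sp2} since the constant stays constant) and that the same $P$ may be used on both sides. In particular, the weak specification hypothesis is not actually invoked in this argument; it is inherited from the ambient setting of the section, where it guarantees, elsewhere, the existence and uniqueness of the relevant Gibbs measures.
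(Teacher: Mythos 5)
Your proof is correct and is essentially identical to the paper's: the paper also chains the Gibbs inequality for $\F$ with the two-sided bound (\ref{simpimp}), replacing the constants $C_n$ and $f_n$ in (\ref{gibbsd}) by $A_nC_0$ and $e^{S_nf}$ to get the weak Gibbs property, and obtains part \ref{sp2} by the same argument with $A_k$ replaced by $A$. Your observation that weak specification is never invoked is also consistent with the paper's proof, which makes no use of it.
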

\begin{proof}
If $\mu$ is a  Gibbs measure for $\F$, then 
there exist $P\in \R, C_0>0$ such that 
\begin{equation*}
\frac{1}{C_0}\leq \frac{\mu[x_1\dots x_n]}{e^{-nP}f_n(x)}\leq C_0 \textnormal{ for every  }x\in X, n\in \N.
\end{equation*}
Replacing $C_n$ and $f_n$ in (\ref{gibbsd}) by $A_nC_0$ and  $e^{S_nf}$ respectively, $\mu$ is weak Gibbs for $f$. The second statement follows by similar arguments,  replacing $A_k$ by $A$ in (\ref{simpimp}). 
\end{proof}


\noindent \textbf{[Setting (A)] }Let $(X,\sigma_X)$ be a subshift with the weak specification property and $\F=\{\log f_n\}_{n=1}^{\infty}$ be a sequence of continuous functions on $X$. 
Define for $x \in X$ $$\underline{f}(x):=\liminf_{n\to \infty}\log \bigg(\frac{f_n(x)}{f_{n-1}(\sigma_X(x))}\bigg) \text{ and } 
\overline{f}(x):=\limsup_{n\to \infty}\log \bigg(\frac{f_n(x)}{f_{n-1}(\sigma_X(x))}\bigg).$$ 

Then $\underline{f}$ and $\overline{f}$ are Borel measurable functions on $X$. In the next theorem we construct a Borel measurable function $\hat f$ satisfying (\ref{cuneothm1}) under certain conditions and study the properties of equilibrium states of $\hat f$. 

\begin{thm} \label{go1}
Under Setting (A), suppose that $\underline{f}(x)=\overline{f}(x)$ for every $x\in X$ and define  $\hat f(x):= \lim_{n\to \infty}\log \bigg(\frac{f_n(x)}{f_{n-1}(\sigma_X(x))}\bigg)$. 
Then the following hold. 
\begin{enumerate}[label=(\roman*)]
\item \label{v1}
If $\F$ is almost additive on $X$, then $f$ is a bounded  Borel measurable function satisfying (\ref{cuneothm1})
and (\ref{propas}). 
If, in addition, $\F$ has  bounded variation on $X$,  then the unique equilibrium state for $\F$ which is also the unique invariant Gibbs measure for $\F$ is the unique equilibrium state for $\hat f$ and it is the unique invariant Gibbs measure for $\hat f$.\\
\item \label{v2}
If $\F$ is merely weakly almost additive on $X$, then $\hat f$ is a bounded  Borel measurable function satisfying (\ref{cuneothm1}) and (\ref{propas}).
 If an equilibrium state for $\F$ is Gibbs for $\F$,  then it is an equilibrium state for $\hat f$ and it is also an invariant weak Gibbs measure for  $\hat f$.
\end{enumerate}  
\end{thm}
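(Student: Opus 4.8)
The plan is to distill from the hypotheses a single uniform estimate comparing $\log f_n$ with the Birkhoff sum $S_n\hat f$, and then to read off every assertion from it together with Lemma \ref{bounded}. First, since $\underline f=\overline f$, the function $\hat f$ is a pointwise limit of the continuous functions $x\mapsto \log\big(f_k(x)/f_{k-1}(\sigma_X x)\big)$ and is therefore Borel measurable. Applying (\ref{aa1})--(\ref{aa2}) to the splitting of a block of length $k$ into a first block of length $1$ and a second block of length $k-1$ gives $\big|\log\big(f_k(x)/f_{k-1}(\sigma_X x)\big)-\log f_1(x)\big|\le C$ (with $C$ replaced by $C_1$ in the weakly almost additive case); letting $k\to\infty$ yields $\lVert \hat f-\log f_1\rVert_\infty\le C$, and since $f_1$ is continuous on the compact space $X$, $\hat f$ is bounded.

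The heart of the matter is the following estimate. Fix $n$ and evaluate $S_n\hat f$ along a diagonal: for each $0\le i\le n-1$ we have $\hat f(\sigma_X^i x)=\lim_{m\to\infty}\log\big(f_{m+n-i}(\sigma_X^i x)/f_{m+n-i-1}(\sigma_X^{i+1}x)\big)$, and because the sum over $i$ is finite the limit may be taken outside it, so that after telescoping
\[ S_n\hat f(x)=\lim_{m\to\infty}\sum_{i=0}^{n-1}\log\frac{f_{m+n-i}(\sigma_X^i x)}{f_{m+n-i-1}(\sigma_X^{i+1}x)}=\lim_{m\to\infty}\big(\log f_{m+n}(x)-\log f_m(\sigma_X^n x)\big). \]
Now (\ref{aa1})--(\ref{aa2}) applied to the splitting $m+n=n+m$ give $\big|\log f_{m+n}(x)-\log f_n(x)-\log f_m(\sigma_X^n x)\big|\le C$ for every $m$; in the weakly almost additive case the constant is $C_n$, which depends only on the first-block length $n$ and is therefore fixed as $m\to\infty$. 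Letting $m\to\infty$ gives the uniform bound
\[ \lVert \log f_n-S_n\hat f\rVert_\infty\le C \qquad(\text{resp. }\le C_n). \]

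From this everything follows. Dividing by $n$ gives (\ref{cuneothm1}) in both cases (using $C_n/n\to 0$), and integrating against any $m\in M(X,\sigma_X)$ and using $\sigma_X$-invariance gives (\ref{propas}). The displayed bound is precisely hypothesis (\ref{simpimp}) of Lemma \ref{bounded}, with $A_k=e^{C}$ constant in the almost additive case and $A_k=e^{C_k}$ in the weakly almost additive case. Hence Lemma \ref{bounded}\ref{sp2} shows that an invariant measure is Gibbs for $\F$ if and only if it is Gibbs for $\hat f$ (almost additive case), while Lemma \ref{bounded}\ref{wp1} shows that a Gibbs measure for $\F$ is a weak Gibbs measure for $\hat f$ (weakly almost additive case). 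Moreover (\ref{propas}) identifies the functional $m\mapsto h_m(\sigma_X)+\int\hat f\,dm$ with $m\mapsto h_m(\sigma_X)+\lim_n\frac1n\int\log f_n\,dm$, so $\F$ and $\hat f$ have the same equilibrium states. If, in the almost additive case, $\F$ also has bounded variation, then it has a unique equilibrium state $\mu$ which is its unique invariant Gibbs measure \cite{b2,m,Fe1}; by the equivalence just noted $\mu$ is the unique equilibrium state of $\hat f$, and any invariant Gibbs measure for $\hat f$ is Gibbs for $\F$ and hence equals $\mu$. In the weakly almost additive case, if an equilibrium state $\mu$ of $\F$ happens to be Gibbs for $\F$, then it is an equilibrium state of $\hat f$ and, by the above, an invariant weak Gibbs measure for $\hat f$.

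The one delicate point is the estimate of the second paragraph. The naive approach --- telescoping $\log f_n(x)=\sum_{i=0}^{n-1}\log\big(f_{n-i}(\sigma_X^i x)/f_{n-i-1}(\sigma_X^{i+1}x)\big)$ and comparing it termwise with $S_n\hat f$ --- would require $\log\big(f_k/f_{k-1}\circ\sigma_X\big)\to\hat f$ \emph{uniformly}, which in general fails, the convergence being only pointwise. The diagonal evaluation sidesteps this by tying $S_n\hat f$ directly to $\log f_n$ through a single use of almost additivity, giving a bound uniform in $x$; the interchange of the limit with the finite sum and the telescoping are then routine, and the only role of the distinction between the two hypotheses is the size of the constant ($C$ versus $C_n$).
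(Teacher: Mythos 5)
Your proof is correct and takes essentially the same approach as the paper: your key uniform estimate $\lVert \log f_n - S_n\hat f\rVert_\infty \le C$ (resp.\ $\le C_n$) is precisely what the paper establishes in Lemma \ref{prego1} — there via one-sided telescoping of $S_k\underline{f}$ and $S_k\overline{f}$ with super/subadditivity of liminf/limsup, whereas you exploit the existence of the full limit to telescope and sandwich directly, a purely presentational difference. All remaining conclusions (measurability, boundedness, (\ref{cuneothm1}), (\ref{propas}), coincidence of equilibrium states, and the Gibbs/weak Gibbs statements via Lemma \ref{bounded}) are then derived exactly as in the paper's proof.
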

\begin{rem}\label{obs}
A weakly almost additive sequence $\F$ on $X$ has an equilibrium state. See Example  \ref{ex3} for the case when such an $\F$ has a unique equilibrium state with the Gibbs property. \end{rem}

We apply the following lemma to show Theorem \ref{go1}. 

\begin{lemma}\label{prego1}
Under Setting (A), the following hold. 
\begin{enumerate}[label=(\roman*)]
\item \label{co2} If  $\F$ satisfies (\ref{aa1}), 
then $\underline{f}$ is bounded from above and $f_k(x)/e^{(S_k \underline{f})(x)}\geq e^{-C}$ for every 
$x\in X$ and $k\in \N$.\\
\item \label{co1} If $\F$ satisfies (\ref{aa2}), then $\overline{f}$ is bounded from below and $f_k(x)/e^{(S_k\overline{f})(x)}\leq e^C$ for every $ x\in X$ and $k\in \N.$\\
\end{enumerate}
\end{lemma}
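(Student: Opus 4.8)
The plan is to reduce both statements to a single telescoping identity combined with the sub/superadditivity of $\liminf$ and $\limsup$. Write $a_n(x):=\log\bigl(f_n(x)/f_{n-1}(\sigma_X(x))\bigr)$, so that $\underline{f}(x)=\liminf_{n\to\infty}a_n(x)$ and $\overline{f}(x)=\limsup_{n\to\infty}a_n(x)$. The computation I would record first is the exact telescoping identity
\begin{equation*}
\log\frac{f_n(x)}{f_{n-k}(\sigma_X^k x)}=\sum_{i=0}^{k-1}a_{n-i}(\sigma_X^i x),\qquad n>k,
\end{equation*}
which holds because each factor $f_{n-i}(\sigma_X^i x)/f_{n-i-1}(\sigma_X^{i+1}x)$ equals $e^{a_{n-i}(\sigma_X^i x)}$ and the product over $i=0,\dots,k-1$ collapses.

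For \ref{co2} I would first treat boundedness from above: taking the two indices in (\ref{aa1}) to be $1$ and $n-1$ gives $f_n(x)\le e^{C}f_1(x)f_{n-1}(\sigma_X x)$, hence $a_n(x)\le C+\log f_1(x)$; since $f_1$ is continuous and positive on the compact space $X$, $\log f_1$ is bounded above, so $\underline f$ is bounded above. For the main inequality, taking the two indices to be $k$ and $n-k$ gives $f_n(x)\le e^C f_k(x) f_{n-k}(\sigma_X^k x)$, so by the telescoping identity
\begin{equation*}
\log f_k(x)+C\ge \log\frac{f_n(x)}{f_{n-k}(\sigma_X^k x)}=\sum_{i=0}^{k-1}a_{n-i}(\sigma_X^i x)
\end{equation*}
for all $n>k$. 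The left side is independent of $n$, so it dominates the $\limsup$ over $n$ of the right side, which in turn dominates the $\liminf$. Using superadditivity of $\liminf$ on the finite sum, $\liminf_{n}\sum_{i=0}^{k-1}a_{n-i}(\sigma_X^i x)\ge\sum_{i=0}^{k-1}\liminf_n a_{n-i}(\sigma_X^i x)=\sum_{i=0}^{k-1}\underline f(\sigma_X^i x)=(S_k\underline f)(x)$, where the middle equality holds because $n-i\to\infty$ for each fixed $i$. Combining yields $\log f_k(x)+C\ge (S_k\underline f)(x)$, i.e. $f_k(x)/e^{(S_k\underline f)(x)}\ge e^{-C}$.

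For \ref{co1} I would run the dual argument with (\ref{aa2}) replacing (\ref{aa1}): indices $1$ and $n-1$ now give $a_n(x)\ge -C+\log f_1(x)$, so $\overline f$ is bounded below; and indices $k$ and $n-k$ give $\log f_k(x)-C\le\sum_{i=0}^{k-1}a_{n-i}(\sigma_X^i x)$ for all $n>k$, so the constant left side is a lower bound for the varying side and therefore does not exceed its $\liminf$, which is at most its $\limsup$. Subadditivity of $\limsup$ on the finite sum then gives $\limsup_n\sum_{i=0}^{k-1}a_{n-i}(\sigma_X^i x)\le (S_k\overline f)(x)$, whence $f_k(x)/e^{(S_k\overline f)(x)}\le e^{C}$.

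The computations are elementary; the one point requiring care—and the step I would flag as the main obstacle—is keeping the directions of the inequalities straight when passing to the limit in $n$. One must use that a bound valid for all large $n$ controls the $\limsup$ (respectively $\liminf$) of the side that varies with $n$, and then match this with the correct inequality for the limit of a finite sum: superadditivity of $\liminf$ for the lower estimate in \ref{co2}, and subadditivity of $\limsup$ for the upper estimate in \ref{co1}. Everything else—the telescoping collapse and the continuity/compactness bound on $f_1$—is routine.
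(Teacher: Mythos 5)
Your proof is correct and follows essentially the same route as the paper's: both rest on the telescoping collapse of $\log\bigl(f_n(x)/f_{n-k}(\sigma_X^k x)\bigr)$ into a sum of the increments $a_{n-i}(\sigma_X^i x)$, superadditivity of $\liminf$ (resp.\ subadditivity of $\limsup$) applied to that finite sum, and the bound $\log\bigl(f_n(x)/f_{n-k}(\sigma_X^k x)\bigr)\le \log(e^{C}f_k(x))$ (resp.\ $\ge \log(e^{-C}f_k(x))$) from (\ref{aa1}) (resp.\ (\ref{aa2})), together with the index-$1$ case for the boundedness of $\underline{f}$ and $\overline{f}$. The only difference is presentational: the paper starts from $(S_k\underline{f})(x)$ written as a sum of reindexed $\liminf$s and bounds upward in one display, while you isolate the telescoping identity first and then pass to the limit; the content is identical.
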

\begin{proof}
We first  we show \ref{co2}. Since $\F$ satisfies (\ref{aa1}), $\underline{f}$ is bounded from above by $C+\max_{x\in X}\log f_1(x)$. Fix $k\in\N$. Then 
\begin{equation*}
\begin{split}
(S_k\underline{f})(x)&=\liminf_{n\to \infty} \log \bigg(\frac{f_n(x)}{f_{n-1}(\sigma_X(x))} \bigg)+
\liminf_{n\to \infty} \log \bigg(\frac{f_{n-1}(\sigma_X(x))}{f_{n-2}(\sigma_X^2(x))}\bigg)\\
&+\dots  
+\liminf_{n\to \infty} \log \bigg(\frac{f_{n-k+1}(\sigma_X^{k-1} (x))}{f_{n-k}(\sigma_X^k(x))}\bigg)\\
&
\leq \liminf_{n\to \infty} \log \bigg(\frac{f_n(x)f_{n-1}(\sigma_X(x))\dots f_{n-k+1}(\sigma_X^{k-1}(x))}{
f_{n-1}(\sigma_X (x))f_{n-2}(\sigma_X^2 (x))\dots f_{n-k}(\sigma_X^{k}(x))}\bigg)\\ &
=\liminf_{n\to \infty} \log \bigg(\frac{f_n(x)}{f_{n-k}(\sigma_X^k (x))}\bigg)\leq \log (e^{C}f_k(x)),
\end{split}
\end{equation*}
where the last inequality holds by (\ref{aa1}). 
We next show \ref{co1}. Since $\F$ satisfies (\ref{aa2}), $\overline{f}$ is bounded from below by $-C+\min_{x\in X} \log f_1(x)$. Fix $k\in\N$. Then 
\begin{equation*}
\begin{split}
(S_k\overline{f})(x)&=\limsup_{n\to \infty} \log \bigg( \frac{f_n(x)}{f_{n-1}(\sigma_X(x))}\bigg) +
\limsup_{n\to \infty}\log \bigg( \frac{f_{n-1}(\sigma_X (x))}{f_{n-2}(\sigma_X^2(x))}\bigg)\\
&+\dots  
+\limsup_{n\to \infty} \log  \bigg(\frac{f_{n-k+1}(\sigma_X^{k-1}(x))}{f_{n-k}(\sigma_X^k(x))}\bigg)\\
&
\geq \limsup_{n\to \infty}  \log \bigg( \frac{f_n(x)f_{n-1}(\sigma_X (x))\dots f_{n-k+1}(\sigma_X^{k-1}(x))}{
f_{n-1}(\sigma_X(x))f_{n-2}(\sigma_X^2 (x))\dots f_{n-k}(\sigma_X^{k}(x))}\bigg)\\ &
=\limsup_{n\to \infty} \log \bigg(\frac{f_n(x)}{f_{n-k}(\sigma_X^k (x))}\bigg)\geq \log (e^{-C}f_k(x)),
\end{split}
\end{equation*}
 where the last inequality holds by (\ref{aa2}). This proves \ref{co1}.
\end{proof}

\noindent \textbf{[Proof of Theorem \ref{go1}]}
We first show \ref{v1}. Lemma \ref{prego1} implies that $\hat f$ is a measurable  bounded function satisfying (\ref{cuneothm1}) and hence  (\ref{propas}) holds. 
By  (\ref{propas}), the unique equilibrium state $\mu$ for $\F$ which is also a unique invariant Gibbs measure for $\F$ is the unique equilibrium state for $\hat f$. By Lemma \ref{bounded}\ref{sp2}, $\mu$ is an invariant Gibbs measure for $\hat f$. 
To show that it is a unique invariant Gibbs measure for $\hat f$, suppose that there is another invariant Gibbs measure $\tilde \mu\neq \mu$ for $\hat f$. Then by Lemma \ref{bounded}\ref{sp2}
$\tilde \mu$ is also an invariant Gibbs measure for $\F$, which is a contradiction.
 Next we show  \ref{v2}. Since $\F$ is weakly almost  additive, we replace $C$ of $(\ref{aa1})$ and $(\ref{aa2})$ by $C_n>0$ such that  $\lim_{n\to \infty}(1/n)C_n=0$. 
 Then 
$-C_1+\min_{x\in X}\log f_1(x)\leq \hat f(x)\leq -C_1+\max_{x\in X}\log f_1(x)$ for all $x\in X$.
 The inequalities in Lemma \ref{prego1} \ref{co2} \ref{co1} replacing $C$ by $C_k$ hold. Hence  (\ref{cuneothm1}) and (\ref{propas}) hold. Since an equilibrium state for $\F$ is an equilibrium state for $\hat f$,  Lemma \ref{bounded} \ref{wp1} implies the last statement. \\


Next, assuming the existence of an invariant Gibbs measure for a sequence $\F$, we construct a Borel measurable function $\hat f$ satisfying (\ref{cuneothm1}).\\

\noindent \textbf{[Setting (B)]} Let $(X,\sigma_X)$ be a subshift with the weak specification property and $\F=\{\log f_n\}_{n=1}^{\infty}$ be a sequence of continuous functions on $X$. Suppose that there exists an invariant Gibbs measure $\nu$ for $\F$, that is, there exist $P\in \R, C_0>0$ such that 
\begin{equation}\label{GibbsF}
\frac{1}{C_0}\leq \frac{\nu[x_1\dots x_n]}{e^{-nP}f_n(x)}\leq C_0  \text {  for every } x\in X, n\in \N. 
\end{equation}

Note that (\ref{GibbsF}) implies that $\F$ has bounded variation. Define for $x\in  X$ $$\underline{r}(x):=\liminf_{n\to \infty}\log \bigg(\frac{\nu[x_1\dots x_n]}{\nu[x_2\dots x_n]}\bigg)+P 
\text{ and } 
\overline{r}(x):=\limsup_{n\to \infty}\log \bigg(\frac{\nu[x_1\dots x_n]}{\nu[x_2\dots x_n]}\bigg) +P.$$ 
Then $\underline{r}$ and $\overline{r}$ are Borel measurable functions on $X$. 

\begin{thm}\label{cando1}
Under Setting (B), suppose that $\underline{r}(x)=\overline{r}(x)$ for every $x\in X$ and define  $r(x):=\lim_{n\to \infty}\log \bigg(\frac{\nu[x_1\dots x_n]}{\nu[x_2\dots x_n]}\bigg)+P$. 
\begin{enumerate}[label=(\roman*)]
\item \label{f1}If $\F$ is almost additive on $X$, then $r$ is  bounded and Borel measurable  on $X$ and  
\begin{equation}\label{simi1}
\lim_{n\to \infty}\frac{1}{n}\int\log f_n  d\mu  =\lim_{n\to \infty}\frac{1}{n}\int\log (\nu[x_1\dots x_n]e^{np})  d\mu=\int r d\mu
\end{equation}
 for every  $\mu\in M(X,\sigma_X)$.
Then the unique equilibrium state $\nu$ for $\F$ is the unique equilibrium state for $r$ which is also the unique invariant Gibbs measure for $r$. 
 \item \label{f2}If $\F$ is merely weakly additive, then $r$ is bounded and Borel measurable, and (\ref{simi1}) holds. The measure $\nu$ is an equilibrium state for $r$ which is an invariant weak Gibbs measure for $r$. 
 \end {enumerate}
\end{thm}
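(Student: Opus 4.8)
The plan is to follow the architecture of the proof of Theorem \ref{go1}, but with the cylinder ratios $\nu[x_1\dots x_n]/\nu[x_2\dots x_n]$ taking over the role played there by $f_n(x)/f_{n-1}(\sigma_X(x))$. The whole argument rests on one uniform comparison between the Birkhoff sums $(S_k r)(x)$ and $\log f_k(x)$, valid for all $x\in X$ and all $k\in\N$, which then feeds directly into Lemma \ref{bounded}. So the real work is to produce condition (\ref{simpimp}) for $r$ and $\F$, after which the equilibrium and Gibbs conclusions come essentially for free.

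First I would establish a telescoping identity for $(S_k r)$. Writing $r(\sigma_X^j x)=\lim_{n\to\infty}\big[\log(\nu[x_{j+1}\dots x_{j+n}]/\nu[x_{j+2}\dots x_{j+n}])+P\big]$ and reindexing the $k$ limits by $m=n+j$ so that every cylinder terminates at the common position $m$, each individual limit exists by the hypothesis $\underline r=\overline r$ applied at the point $\sigma_X^j x$; hence I may interchange the finite sum over $j$ with the limit, and the resulting product telescopes to give
\[
(S_k r)(x)=\lim_{m\to\infty}\Big[\log\frac{\nu[x_1\dots x_m]}{\nu[x_{k+1}\dots x_m]}+kP\Big].
\]

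Next I would insert the Gibbs inequality (\ref{GibbsF}) into this identity. Applying (\ref{GibbsF}) to $x$ at length $m$ and to $\sigma_X^k x$ at length $m-k$ (noting that $[x_{k+1}\dots x_m]$ is precisely the length-$(m-k)$ cylinder of $\sigma_X^k x$), the ratio inside the limit equals $e^{-kP}f_m(x)/f_{m-k}(\sigma_X^k x)$ up to a factor $C_0^{\pm 2}$; almost additivity (\ref{aa1})--(\ref{aa2}) then traps $f_m(x)/f_{m-k}(\sigma_X^k x)$ between $e^{\mp C}f_k(x)$. Letting $m\to\infty$ yields the key estimate $|(S_k r)(x)-\log f_k(x)|\le 2\log C_0+C$ for all $x,k$, i.e. (\ref{simpimp}) with the constant $A=C_0^2 e^{C}$. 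Taking $k=1$ together with continuity of $\log f_1$ gives boundedness of $r$, while $r$ is Borel measurable as a pointwise limit of the measurable ratios; and the uniform estimate delivers (\ref{simi1}), since $\int (S_k r)\,d\mu=k\int r\,d\mu$ by invariance and $|\log f_k-\log(\nu[x_1\dots x_k]e^{kP})|\le\log C_0$ forces the three limits to agree.

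The conclusions then follow from Lemma \ref{bounded} and the structure theory of $\F$. For part \ref{f1}, since $A$ is constant, Lemma \ref{bounded}\ref{sp2} makes the invariant Gibbs measures for $\F$ and for $r$ coincide; as $\F$ is almost additive with bounded variation (automatic from (\ref{GibbsF})) on a space with weak specification, $\nu$ is its unique equilibrium state and unique invariant Gibbs measure, so (\ref{simi1}) identifies the two variational functionals and the contradiction argument of Theorem \ref{go1}\ref{v1} transfers both uniqueness statements to $r$. For part \ref{f2}, I would replace $C$ by the block-length constant $C_k$ throughout, obtaining $A_k=C_0^2 e^{C_k}$ with $\lim_k(1/k)\log A_k=0$; then Lemma \ref{bounded}\ref{wp1} gives that $\nu$ is an invariant weak Gibbs measure for $r$, and since a Gibbs measure for the asymptotically additive $\F$ is an equilibrium state, (\ref{simi1}) shows $\nu$ is an equilibrium state for $r$. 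I expect the main obstacle to be bookkeeping rather than conceptual: justifying the interchange of the $k$ independent limits so the telescoping is legitimate, and carefully tracking that it is $C_k$ (not some other index) controlling $f_m(x)/f_{m-k}(\sigma_X^k x)$ in the weakly almost additive case, so that the error $2\log C_0+C_k$ stays subexponential in $k$.
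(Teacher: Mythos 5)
Your proof is correct, and it reaches the conclusions by a genuinely different organization than the paper's own proof. The paper never compares $r$ to $\F$ directly: it introduces the auxiliary sequence $r_n(x):=\nu[x_1\dots x_n]e^{nP}$, uses (\ref{GibbsF}) together with (\ref{aa1})--(\ref{aa2}) to show $(C_0^3e^{C})^{-1}r_n(x)r_m(\sigma_X^n x)\le r_{n+m}(x)\le C_0^3e^{C}r_n(x)r_m(\sigma_X^n x)$ (with $e^{C_n}$ in the weak case), so that $\{\log r_n\}_{n=1}^{\infty}$ is itself (weakly) almost additive with bounded variation; since $r_n(x)/r_{n-1}(\sigma_X x)=e^{P}\,\nu[x_1\dots x_n]/\nu[x_2\dots x_n]$, Setting (B) is literally Setting (A) for this sequence, and since $\nu[x_1\dots x_n]/(e^{-nP}r_n(x))=1$ the measure $\nu$ is trivially an invariant Gibbs measure for it, so Theorem \ref{go1} is then cited as a black box and its conclusions are transferred between $\{\log r_n\}$ and $\F$ through the Gibbs property. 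You instead inline the mechanism: you redo the telescoping of Lemma \ref{prego1} for $r$ (legitimate here, because the hypothesis $\underline{r}=\overline{r}$ makes every limit exist, and the Gibbs plus almost additive bounds make them finite), and feed (\ref{GibbsF}) and (\ref{aa1})--(\ref{aa2}) into it simultaneously to obtain the single uniform estimate $\vert (S_kr)(x)-\log f_k(x)\vert\le 2\log C_0+C$ (resp. $2\log C_0+C_k$), which is exactly condition (\ref{simpimp}) for $r$ against the original sequence $\F$; Lemma \ref{bounded}, the identification of the two variational functionals via (\ref{simi1}), and the uniqueness-by-contradiction step then close both parts just as in Theorem \ref{go1}. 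What the paper's route buys is economy and the structural point that Theorem \ref{cando1} is an instance of Theorem \ref{go1}; what yours buys is a self-contained argument with an explicit constant $A=C_0^2e^{C}$ and a one-step proof that $r$ witnesses (\ref{cuneothm1}) for $\F$ itself (the paper obtains this by concatenating two comparisons, $S_nr$ against $\log r_n$ and $\log r_n$ against $\log f_n$). You also correctly isolate the only delicate indexing point in part \ref{f2}: the weak-almost-additivity constant controlling $f_m(x)/f_{m-k}(\sigma_X^kx)$ is $C_k$, attached to the retained block of length $k$, which is what keeps the error subexponential in $k$.
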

\begin{proof} 
We first notice that the first equality of (\ref{simi1}) follows from  (\ref{GibbsF}). 
Now we show \ref{f1}.
For $x\in X, n\in \N$, define $r_n(x):=\nu[x_1\dots x_n]e^{nP}$. If $\F$ is almost additive satisfying (\ref{aa1}) and (\ref{aa2}), then
using (\ref{GibbsF}), a simple computation shows that
 $$(C^3_{0}e^C)^{-1}r_{n}(x)r_{m}(\sigma^nx)\leq r_{n+m}(x)\leq r_{n}(x)r_{m}(\sigma^nx)C^3_{0}e^C$$ for every $x\in X, n, m \in \N$. 
 Hence $\{\log r_n\}_{n=1}^{\infty}$ is almost additive and bounded variation. 
 Since $\underline{r}=\overline{r}$, 
applying Theorem \ref{go1}\ref{v1} to $\{\log r_n\}_{n=1}^{\infty}$, we obtain the second equality of (\ref{simi1}) and the first part of \ref{f1}. Notice that  $\nu$ is the unique equilibrium state for $\F$ and 
$\nu$ is Gibbs for $\{\log r_n\}_{n=1}^{\infty}$ because
$\nu[x_1\dots x_n]/(e^{-nP}r_n(x))=1$. Hence it is the unique equilibrium state for $\{\log r_n\}_{n=1}^{\infty}$. Applying Theorem \ref{go1}\ref{v1}, we obtain \ref{f1}.
We next show \ref{f2} in the similar manner. If $\F$ is weakly almost additive, then (\ref{GibbsF}) implies that there exists $C_n>0$ such that $\lim_{n\to \infty}C_n/n=0$ satisfying
$$(C^3_{0}e^{C_n})^{-1}r_{n}(x)r_{m}(\sigma^nx)\leq r_{n+m}(x)\leq r_{n}(x)r_{m}(\sigma^nx)C^3_{0}e^{C_n}$$
for every $x\in X, n, m\in \N$. Hence $\{\log r_n\}_{n=1}^{\infty}$  is weakly almost additive. Theorem \ref{go1}\ref{v2} implies the second equality of (\ref{simi1}) and the first part of \ref{f2}. Since $\nu$ satisfies the Gibbs property where $P$ is given by the topological pressure of $\{\log r_n\}_{n=1}^{\infty}$, it is an equilibrium state for $\{\log r_n\}_{n=1}^{\infty}$ and Theorem \ref{go1}\ref{v2} implies \ref{f2}.
\end{proof}

\section{Applications}\label{apli}
\subsection{Relative pressure functions and images of Gibbs measures for continuous functions}\label{a1}
We apply Theorem \ref{go1} to the sequences $\G$ associated to relative pressure functions (see (\ref{relatived})). Corollaries \ref{aplrelative} and \ref{basic0} connect $\G$ with images of Gibbs measures. 
For a survey of the study of images of Gibbs measures, see the paper by Boyle and Petersen \cite{BP}. 
Given a one-block factor map $\pi: X\rightarrow Y$ between subshifts and 
an invariant measure $\mu$ on $X$, define the image $\pi\mu\in M(Y, \sigma_Y)$ by 
$\pi\mu(B)= \mu(\pi^{-1}B)$ { for a Borel set  $B$ of $Y$}. Let $ f\in C(X)$.
For $y=(y_1, \dots, y_n, \dots)\in Y$,
define $E_n(y)$ to be a set consisting of exactly one point from each cylinder $[x_1\dots x_n]$ in $X$ such that $\pi(x_1\dots x_n)=y_1\dots y_n$. For $n\in\N$, 
\begin{equation}\label{beq}
g_n(y):=\sup_{E_n(y)}\{\sum_{x\in E_n(y)}e^{(S_nf)(x)}\}.
\end{equation}
Then 
\begin{equation*}\label{pesh}
P(\sigma_X, \pi, f)(y)=\limsup _{n\rightarrow\infty} \frac{1}{n} \log {g_n}(y),  
\mu\text{-almost everywhere for any } \mu\in M(Y,\sigma_Y)
\end{equation*}
(see \cite{PS, Fe1}). 
It is clear that $\G=\{\log g_n\}_{n=1}^{\infty}$ has bounded variation. 

 \begin{coro}\label{aplrelative}
Let $\pi: X\to Y$ be a one-block factor map between subshifts where $X$ is a subshift with the weak specification property. Given $f\in C(X)$, let $\G=\{\log g_n\}_{n=1}^{\infty}$ be  the sequence on $Y$ associated to $P(\sigma_X, \pi, f)$.
For $y\in Y$, define a function $g: X\to \R$ by 
\begin{equation}\label{defineg}
g(y)=\lim_{n\to \infty}\log \left(\frac{g_n(y)}{g_{n-1}(\sigma_Y y)}\right)
\end{equation}
if the limit exists at every point $y\in Y$.
Then the following hold. 
\begin{enumerate} [label=(\roman*)]
\item \label{almostcase}If the sequence $\G$ is almost additive on $Y$, 
then $g$ is bounded and Borel measurable on $Y$ and 
\begin{equation}\label{G1}
\int P(\sigma_X, \pi, f)d\mu
=\int g d\mu \text{ for every } \mu\in M(Y, \sigma_Y).
\end{equation}
Then 
the unique equilibrium state for $\G$ which is also the unique invariant Gibbs measure for $\G$ is the unique equilibrium state for $g$  and it is the unique  invariant Gibbs measure for $g$.
\item \label{walmostcase} If $\G$ is weakly almost additive on $Y$, then $g$ is bounded and Borel measurable on $Y$  and (\ref{G1}) holds. If,  in addition, $\G$ has an equilibrium state which is also an invariant Gibbs measure for $\G$, then it  is an equilibrium state for  $g$  which is also an invariant weak Gibbs measure for $g$.

\end{enumerate}
\end{coro}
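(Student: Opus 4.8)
The plan is to recognize this corollary as a direct instance of Theorem \ref{go1} applied to the target subshift $(Y,\sigma_Y)$ and the sequence $\G=\{\log g_n\}_{n=1}^{\infty}$, with the function $g$ of (\ref{defineg}) playing the role of $\hat f$. Comparing (\ref{defineg}) with the definition of $\hat f$ in Theorem \ref{go1} (replace $f_n$ by $g_n$ and $\sigma_X$ by $\sigma_Y$), the standing hypothesis that the limit in (\ref{defineg}) exists at every $y\in Y$ is exactly the condition $\underline{g}=\overline{g}$ required there. So almost all of the conclusion should follow once I verify that $(Y,\sigma_Y)$ together with $\G$ satisfies Setting (A), and once I translate the statement (\ref{propas}) for $\G$ into the relative-pressure identity (\ref{G1}).

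First I would check that Setting (A) applies to $(Y,\sigma_Y)$ and $\G$. There are three ingredients. (a) $Y$ has the weak specification property: since $\pi:X\to Y$ is a one-block factor map and $X$ has weak specification, any two allowable words $u',v'\in B(Y)$ lift to allowable words $u,v\in B(X)$, a connecting word $w$ of length at most $p$ is supplied in $X$ by weak specification, and its image $\pi(w)$ connects $u'$ and $v'$ in $Y$; hence the same constant works on $Y$. (b) $\G$ is a sequence of continuous functions on $Y$: from (\ref{beq}) the value $g_n(y)=\sum_{w}\sup_{x\in[w]}e^{(S_nf)(x)}$, where $w$ ranges over the words of $B_n(X)$ projecting to $y_1\dots y_n$, depends only on $y_1\dots y_n$, so each $g_n$ is locally constant and hence continuous, while $\G$ has bounded variation as already noted before the corollary. (c) $\underline{g}=\overline{g}$ by hypothesis, and both are Borel measurable.

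With Setting (A) in force I would invoke Theorem \ref{go1}. In case \ref{almostcase}, $\G$ almost additive yields by Theorem \ref{go1}\ref{v1} that $g$ is bounded and Borel measurable and satisfies (\ref{cuneothm1}) and (\ref{propas}) with $f_n=g_n$; since $\G$ has bounded variation and $Y$ has weak specification, the cited uniqueness results identify the unique equilibrium state for $\G$ with the unique invariant Gibbs measure for $\G$, and Theorem \ref{go1}\ref{v1} then makes this measure the unique equilibrium state and the unique invariant Gibbs measure for $g$. In case \ref{walmostcase}, $\G$ weakly almost additive gives the boundedness, measurability, (\ref{cuneothm1}) and (\ref{propas}) from Theorem \ref{go1}\ref{v2}, and if $\G$ admits an equilibrium state that is Gibbs for $\G$, the same theorem makes it an equilibrium state and an invariant weak Gibbs measure for $g$.

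The one step that is not pure bookkeeping, and which I expect to be the crux, is deriving (\ref{G1}) from (\ref{propas}). Fix $\mu\in M(Y,\sigma_Y)$; then (\ref{propas}) for $\G$ reads $\lim_{n\to\infty}\tfrac1n\int\log g_n\,d\mu=\int g\,d\mu$. On the other hand $\G$ is subadditive and $\tfrac1n\log g_n$ is uniformly bounded (because $f$ is bounded), so by Kingman's subadditive ergodic theorem $\tfrac1n\log g_n$ converges $\mu$-a.e. to a limit whose integral equals $\lim_{n\to\infty}\tfrac1n\int\log g_n\,d\mu$; and as recorded before (\ref{beq}), this pointwise limit equals $P(\sigma_X,\pi,f)$ $\mu$-a.e. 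Combining the two gives $\int P(\sigma_X,\pi,f)\,d\mu=\lim_{n\to\infty}\tfrac1n\int\log g_n\,d\mu=\int g\,d\mu$, which is (\ref{G1}). The delicate point is justifying the interchange of limit and integral for the subadditive sequence, and the uniform bound on $\tfrac1n\log g_n$ together with Kingman's theorem is precisely what supplies it.
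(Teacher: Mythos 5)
Your proposal is correct and follows exactly the paper's route: the paper's entire proof is the single line ``Set $f_n=g_n$ in Theorem \ref{go1}.'' The details you supply---that weak specification passes from $X$ to $Y$ under a one-block factor map, that each $g_n$ is locally constant hence continuous, and that (\ref{G1}) follows from (\ref{propas}) via Kingman's subadditive ergodic theorem and the $\mu$-a.e.\ identification of $P(\sigma_X,\pi,f)$ with $\lim_n \frac{1}{n}\log g_n$---are precisely the justifications the paper leaves implicit, and they are all sound.
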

\begin{rem}
The function $g$ is similar to  the function $u$ defined 
in \cite{PK, K}. 
\end{rem}
\begin{proof}
Set $f_n=g_n$ in Theorem \ref{go1}. 
\end{proof}

The next lemma connects relative pressure functions with images of Gibbs measures.
\begin{lemma} \label{fact1} (See \cite{Fe1,Y3})
Let $\pi: X\to Y$ be a one-block factor map between subshifts where $X$ is a subshift with the weak specification property. If $\mu$ is an invariant  Gibbs measure for $f\in C(X)$, then $\pi\mu$ is  the unique equilibrium state for the sequence 
$\G$ on $Y$ associated to $P(\sigma_X, \pi, f)$ and it is the unique invariant Gibbs measure for $\G$. 
\end{lemma}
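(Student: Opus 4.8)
The plan is to decouple the two assertions---that $\pi\mu$ is an invariant Gibbs measure for $\G$, and that it is simultaneously the unique equilibrium state and the unique invariant Gibbs measure---handling the Gibbs bound by an elementary counting argument and importing the relative variational principle of Ledrappier--Walters \cite{LW} for the uniqueness statements.

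\textbf{Step 1 (Gibbs bound for $\pi\mu$).} First I would use that $\mu$ is Gibbs for $f$: there are $C_0>0$ and a constant $P:=P(f)$ (the topological pressure of $f$) with
\[
C_0^{-1}\le \frac{\mu[x_1\dots x_n]}{e^{-nP}e^{(S_nf)(x)}}\le C_0\qquad(x\in X,\ n\in\N).
\]
Two consequences are immediate. For $x,x'$ in the same $n$-cylinder of $X$ one gets $e^{(S_nf)(x)}/e^{(S_nf)(x')}\le C_0^2$, so $\{e^{S_nf}\}$ has bounded $n$-variation (and thus the additive sequence $\{S_nf\}$ has bounded variation). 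And, since $\pi$ is one-block, $\pi\mu[y_1\dots y_n]=\sum_{\pi(x_1\dots x_n)=y_1\dots y_n}\mu[x_1\dots x_n]$. Choosing one representative $x^{(i)}$ in each preimage cylinder and summing the Gibbs inequality shows $\pi\mu[y_1\dots y_n]\asymp e^{-nP}\sum_i e^{(S_nf)(x^{(i)})}$; the bounded $n$-variation combined with (\ref{beq}) then gives $\sum_i e^{(S_nf)(x^{(i)})}\asymp g_n(y)$. Hence $(C_0^3)^{-1}\le \pi\mu[y_1\dots y_n]\big/(e^{-nP}g_n(y))\le C_0^3$ for all $y\in Y$, $n\in\N$, so $\pi\mu$ is an invariant Gibbs measure for $\G$ with constant $P$.

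\textbf{Step 2 (the constant is the pressure, and $\pi\mu$ is an equilibrium state).} Next I would identify $P$ with the topological pressure of $\G$. Using $P(\sigma_X,\pi,f)=\lim_n\frac1n\log g_n$ $m$-a.e.\ together with Kingman's theorem, $\int P(\sigma_X,\pi,f)\,dm=\lim_n\frac1n\int\log g_n\,dm$ for every $m\in M(Y,\sigma_Y)$. Feeding this into the relative variational principle \cite{LW},
\[
h_m(\sigma_Y)+\lim_n\tfrac1n\int\log g_n\,dm=\sup_{\nu:\pi\nu=m}\{h_\nu(\sigma_X)+\textstyle\int f\,d\nu\},
\]
and taking the supremum over $m$ identifies the common value with $P(f)$; by the variational principle for $\G$ this value is $P(\G)$, so $P(\G)=P(f)=P$. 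Applying the displayed identity to $m=\pi\mu$, and noting that $\mu$ realizes $P(f)$ on the fiber over $\pi\mu$, yields $h_{\pi\mu}(\sigma_Y)+\lim_n\frac1n\int\log g_n\,d(\pi\mu)=P(\G)$, i.e.\ $\pi\mu$ is an equilibrium state for $\G$.

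\textbf{Step 3 (uniqueness).} Let $m$ be any equilibrium state for $\G$. The relative variational principle gives $\sup_{\nu:\pi\nu=m}\{h_\nu+\int f\,d\nu\}=P(f)$; since $X$ is a subshift over a finite alphabet the entropy map is upper semicontinuous and the fiber is compact, so the supremum is attained at some $\nu$ with $\pi\nu=m$, and this $\nu$ is an equilibrium state for $f$. As $\{S_nf\}$ has bounded variation (Step 1) and $X$ has the weak specification property, $\mu$ is the unique equilibrium state for $f$ by the result quoted in Section \ref{back}, whence $\nu=\mu$ and $m=\pi\mu$. For uniqueness of the invariant Gibbs measure, let $\eta$ be invariant and Gibbs for $\G$; summing its Gibbs inequality over the $n$-cylinders of $Y$ forces its constant to be $\lim_n\frac1n\log\sum_{w}g_n=P(\G)$, and the Shannon--McMillan--Breiman identity $\int\log\eta[y_1\dots y_n]\,d\eta=-H_\eta(\bigvee_{i=0}^{n-1}\sigma_Y^{-i}\alpha)$ for the symbol partition $\alpha$ gives $\lim_n\frac1n\int\log g_n\,d\eta=P(\G)-h_\eta(\sigma_Y)$; thus $\eta$ is an equilibrium state and, by the previous sentence, $\eta=\pi\mu$. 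The hard part is Step 2: everything rests on the relative variational principle of \cite{LW} and on upgrading the $m$-a.e.\ identity $P(\sigma_X,\pi,f)=\lim_n\frac1n\log g_n$ to an identity of integrals (both available in \cite{Fe1,Y3}), whereas Steps 1 and 3 are elementary once these inputs are in hand.
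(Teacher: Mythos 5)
Your proposal is correct. Note, however, that the paper itself offers no proof of Lemma \ref{fact1}: it is quoted as known, with the proof deferred to \cite{Fe1,Y3}, so there is no internal argument to compare against line by line. Your reconstruction follows the same standard route as those references: (1) a counting/summation argument showing that the one-block image of the Gibbs measure satisfies the Gibbs inequality for $\G$ (using that the Gibbs property of $\mu$ forces bounded variation of $\{S_nf\}$, so that any choice of representatives in (\ref{beq}) is comparable to the optimal one); (2) Kingman's theorem plus the a.e.\ identity $P(\sigma_X,\pi,f)=\lim_n\frac1n\log g_n$ to pass to integrals, and the Ledrappier--Walters relative variational principle to identify $\sup_m\{h_m(\sigma_Y)+\lim_n\frac1n\int\log g_n\,dm\}$ with $P(f)$ and to conclude that $\pi\mu$ attains it; (3) upper semicontinuity of entropy and compactness of fibers to lift any equilibrium state of $\G$ to an equilibrium state of $f$, whose uniqueness (bounded variation plus weak specification, the result quoted in Section \ref{back}) forces $m=\pi\mu$, with the Gibbs-implies-equilibrium argument via Shannon--McMillan--Breiman handling uniqueness among invariant Gibbs measures. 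One presentational point: Step 2 already uses that $h_\mu(\sigma_X)+\int f\,d\mu=P(f)$, i.e.\ that the invariant Gibbs measure $\mu$ is an equilibrium state for $f$, but you only justify this in Step 3; logically that justification should be promoted to Step 1 or 2 (it is exactly where bounded variation and weak specification enter), though this is an ordering issue, not a gap.
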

The next corollary is useful to study the images of Gibbs measures under factor maps and applied in Section \ref{nicephd}.  
\begin{coro}\label{basic0}
Under the assumption of Corollary \ref{aplrelative}, suppose $f\in C(X)$ has an invariant Gibbs measure $\mu$ and let $g$ be defined as in Corollary \ref{aplrelative}. If $\G$ is almost additive (resp. weakly almost additive) on $Y$, then $\pi\mu$ is the unique invariant Gibbs (resp. weak Gibbs) measure for $g$ and it is the unique equilibrium state for $g$. 
\end{coro}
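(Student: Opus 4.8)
The plan is to read Corollary \ref{basic0} as a synthesis of Lemma \ref{fact1} and Corollary \ref{aplrelative}, handling the almost additive and weakly almost additive cases separately. In both cases the engine is Lemma \ref{fact1}, which already tells us that $\pi\mu$ is at once the unique equilibrium state for $\G$ and the unique invariant Gibbs measure for $\G$; the remaining task is to transport these two uniqueness statements from $\G$ to the function $g$ produced in Corollary \ref{aplrelative}.

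In the almost additive case nothing further is needed. Corollary \ref{aplrelative}\ref{almostcase} asserts that the measure which is the unique equilibrium state for $\G$ (equivalently, the unique invariant Gibbs measure for $\G$) is exactly the unique equilibrium state for $g$ and the unique invariant Gibbs measure for $g$. Since Lemma \ref{fact1} identifies that measure as $\pi\mu$, the conclusion is immediate.

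The weakly almost additive case separates into existence and uniqueness. For existence I would observe that Lemma \ref{fact1} furnishes an equilibrium state for $\G$, namely $\pi\mu$, which is also an invariant Gibbs measure for $\G$; this is precisely the extra hypothesis in Corollary \ref{aplrelative}\ref{walmostcase}, which then gives that $\pi\mu$ is an equilibrium state for $g$ and an invariant weak Gibbs measure for $g$. Uniqueness of the equilibrium state is cheap: Corollary \ref{aplrelative} supplies, via (\ref{G1}), the identity $\lim_{n\to\infty}\frac{1}{n}\int \log g_n\, dm = \int g\, dm$ for every $m\in M(Y,\sigma_Y)$, so the functionals $m\mapsto h_m(\sigma_Y)+\int g\, dm$ and $m\mapsto h_m(\sigma_Y)+\lim_{n\to\infty}\frac{1}{n}\int \log g_n\, dm$ coincide on $M(Y,\sigma_Y)$; hence $g$ and $\G$ have the same equilibrium states, and $\pi\mu$, being the unique one for $\G$, is the unique one for $g$.

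The main obstacle is uniqueness of the invariant weak Gibbs measure, since Corollary \ref{aplrelative}\ref{walmostcase} produces $\pi\mu$ as only \emph{one} weak Gibbs measure for $g$. I would close this by showing that every invariant weak Gibbs measure $\nu$ for $g$ is automatically an equilibrium state for $g$, forcing $\nu=\pi\mu$ by the uniqueness just proved. For this, recall that for a weak Gibbs measure of the (asymptotically additive) function $g$ the constant $P$ in (\ref{gibbsd}) equals the topological pressure of $g$ (see \cite{FH}). Taking logarithms in the weak Gibbs inequality gives $-\frac{1}{n}\log\nu[y_1\dots y_n] = P - \frac{1}{n}(S_n g)(y) + \varepsilon_n(y)$ with $\sup_{y}|\varepsilon_n(y)|\le \frac{1}{n}\log C_n\to 0$; integrating against $\nu$, using $\sigma_Y$-invariance to replace $\frac{1}{n}\int(S_n g)\,d\nu$ by $\int g\,d\nu$, and letting $n\to\infty$ identifies the left-hand side with $h_\nu(\sigma_Y)$ through the Kolmogorov--Sinai limit $h_\nu(\sigma_Y)=\lim_{n\to\infty}-\frac{1}{n}\int\log\nu[y_1\dots y_n]\,d\nu$ (the partition into symbols being a generator on the subshift $Y$). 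This yields $h_\nu(\sigma_Y)+\int g\,d\nu=P$, the topological pressure of $g$, so $\nu$ is an equilibrium state, as needed. I would deliberately run this through the integrated Kolmogorov--Sinai identity rather than a pointwise Shannon--McMillan--Breiman argument, precisely to sidestep the question of whether the weak Gibbs property survives ergodic decomposition of $\nu$.
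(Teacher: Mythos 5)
Your proposal is correct and follows the paper's route: the paper's entire proof of Corollary \ref{basic0} is the single sentence ``Corollary \ref{aplrelative} and Lemma \ref{fact1} imply the result,'' which is exactly the synthesis you describe, and in the almost additive case your reading matches it word for word. Where you go beyond the paper is the weakly almost additive case: you correctly notice that Corollary \ref{aplrelative}\ref{walmostcase} delivers only \emph{existence} of an invariant weak Gibbs measure for $g$ (namely $\pi\mu$), not uniqueness, so the paper's one-line citation leaves the uniqueness claim implicit. Your closing argument --- integrate the weak Gibbs inequality, use the Kolmogorov--Sinai identity $h_\nu(\sigma_Y)=\lim_{n\to\infty}-\frac{1}{n}\int\log\nu[y_1\dots y_n]\,d\nu$ for the generating partition, and identify the constant $P$ with the pressure so that any invariant weak Gibbs measure for $g$ is an equilibrium state, hence equals $\pi\mu$ --- is sound and is precisely the kind of argument needed to make the uniqueness assertion rigorous. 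One small point of hygiene: $g$ is only Borel measurable, so invoking \cite{FH} directly ``for the function $g$'' is an abuse; the clean bridge is that $\lVert \log g_n - S_n g\rVert_\infty$ grows sublinearly (Lemma \ref{prego1} with constants $C_n$), so any weak Gibbs measure for $g$ is weak Gibbs for the asymptotically additive sequence $\G$ of continuous functions, to which the paper's remark on the variational principle and the value of $P$ applies, and then (\ref{G1}) converts the pressure of $\G$ into $\sup_m\{h_m(\sigma_Y)+\int g\,dm\}$. With that rerouting, your argument is complete and in fact supplies a justification the paper omits.
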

\begin{proof}
Corollary \ref{aplrelative} and  Lemma \ref{fact1} imply the result.
\end{proof}

\subsection{The fiber-wise sub-positive mixing  property of factor maps and almost additivity of subadditive sequences associated to relative pressure functions} \label{fiberaaR}

The fiber-wise sub-positive mixing property of factor maps is a sufficient condition for a factor map to send all Markov measures to Gibbs measures \cite{Yo}. This property is assumed in various papers in order to study images of Gibbs measures (for the most general results, see \cite{Pi2}). 

In this section, noting that the fiber-wise sub-positive mixing property is not a necessary condition for the image to be Gibbs 
(see Proposition \ref{aafiber}),
 we characterize the Gibbs property of the images of Markov measures in terms of almost additivity of the sequences associated to relative pressure functions, for  a special type of factor maps (Theorem \ref{phd1} and Corollary \ref{phd2}).  Theorem \ref{phd1} gives an answer to [Q1] from Section 1 under a particular setting (Corollary  \ref{phd2}).

\begin{defi}
\label{subp} \cite{Pi2, Yo} 
Let $\pi: X \to Y$ be a one-block factor map between subshifts where $X$ is a shift of finite type. 
Then $\pi: X\to Y$  is {\em fiber-wise sub-positive mixing}  if there exists $k\in \N$ such that  for any 
$w\in B_k(Y)$, $u_1\dots u_k \in B_k(X),  v_1\dots v_k\in B_k(X)$ satisfying
$\pi(u_1\dots u_k)=\pi(v_1\dots v_k)=w$, there exists $ a_1\dots a_k\in B_k(X)$ with 
$a_1=u_1$ and $a_k=v_k$ such that $\pi(a_1\dots a_k)=w$.
\end{defi}

\begin{thm}\cite{Pi2}\label{bestP}
Let $\pi: X\to Y$ be a one-block factor map between subshifts, where $X$ is a topologically mixing shift of finite type, satisfying the fiber-wise sub-positive mixing property. If $\mu$ is an invariant Gibbs measure for $f\in C(X)$, then  $\pi\mu$ is an invariant Gibbs measure for  $\psi \in C(Y)$ where $\psi$ is defined by
$\psi(y)=\lim_{n\to \infty}\log ({\pi\mu[y_1\dots y_n]}/{\pi\mu[y_2\dots y_n]})$.
\end{thm}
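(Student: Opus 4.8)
The plan is to run everything through the subadditive sequence $\G=\{\log g_n\}$ associated to the relative pressure $P(\sigma_X,\pi,f)$, the object on which Lemma \ref{fact1} and Theorem \ref{cando1} act. Since $X$ is a topologically mixing shift of finite type it has the weak specification property, and this property passes to its factor $Y$; moreover Lemma \ref{fact1} applies, so $\pi\mu$ is the unique invariant Gibbs measure for $\G$, say with pressure constant $P$ and uniform constant $C_0$ witnessing $C_0^{-1}\le \pi\mu[y_1\dots y_n]/(e^{-nP}g_n(y))\le C_0$. Hence Setting (B) holds on $Y$ with $\nu=\pi\mu$ and $\F=\G$, and the function produced by Theorem \ref{cando1} is exactly $r(y)=\psi(y)+P$, where $\psi$ is the target of the statement. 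Since $r$ and $\psi$ differ by the constant $P$, being Gibbs for $r$ coincides with being Gibbs for $\psi$; it therefore suffices to verify the two hypotheses of Theorem \ref{cando1}\ref{f1} --- almost additivity of $\G$ and the everywhere-existence of the limit defining $r$ --- and then to upgrade the conclusion to continuity of $\psi$.

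The first hypothesis is where the fibre-wise sub-positive mixing property enters. Subadditivity of $\G$, i.e. $g_{n+m}(y)\le g_n(y)\,g_m(\sigma_Y^n y)$, is automatic from concatenating fibre cylinders. For the matching lower bound $g_{n+m}(y)\ge e^{-C}g_n(y)\,g_m(\sigma_Y^n y)$ I would take near-optimal separated families over the fibres of $y_1\dots y_n$ and of $(\sigma_Y^ny)_1\dots(\sigma_Y^ny)_m$ and glue them: Definition \ref{subp} guarantees, for each pair of fibre words with prescribed endpoints, an admissible bridging word of bounded length in $X$ projecting correctly, so the two families assemble into a single family over the fibre of $y_1\dots y_{n+m}$, with the multiplicative loss controlled uniformly by the bounded variation of $S_nf$. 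This yields the almost additivity of $\G$, which is the sufficiency direction of the fibre-mixing/almost-additivity relationship studied in Section \ref{fiberaaR}.

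The crux is the second hypothesis together with the continuity claim: I must show that $\pi\mu[y_1\dots y_n]/\pi\mu[y_2\dots y_n]$ converges, uniformly in $y$, to a continuous limit. The device is a fibre transfer-operator (equivalently, matrix-cocycle) representation of the push-forward measure: after normalising the potential of $f$, the cylinder measures $\pi\mu[y_1\dots y_n]$ are realised, up to the uniform Gibbs constant, as $\mathcal{L}_{y_1}\cdots\mathcal{L}_{y_n}$ applied to the constant function and summed over the fibre, so the above ratio is the action of a projective map determined by $y_1$. Fibre-wise sub-positive mixing is precisely the combinatorial condition making every length-$k$ product of these nonnegative operators strictly positive; by Birkhoff's theorem this forces a uniform contraction in the Hilbert projective metric, hence exponential convergence of the normalised products to a limit whose modulus of continuity is inherited from that of $f$. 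This simultaneously gives $\underline r=\overline r$ everywhere and $\psi\in C(Y)$.

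With both hypotheses in hand, Theorem \ref{cando1}\ref{f1} applies: $\pi\mu$ is the unique equilibrium state for $r$ and its unique invariant Gibbs measure, and since $r=\psi+P$ the same holds for $\psi$, while the third step supplies its continuity. The main obstacle is plainly that third step: almost additivity alone does not force the limit defining $\psi$ to exist (Theorem \ref{cando1} merely assumes $\underline r=\overline r$ and, even then, yields only Borel measurability), so the cone-contraction argument --- and in particular running it for a merely continuous rather than H\"older $f$, where the finite matrices must be replaced by positive operators on an appropriate cone --- carries the genuine weight of the theorem.
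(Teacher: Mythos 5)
This statement is quoted by the paper from Piraino \cite{Pi2}; the paper contains no proof of it (on the contrary, it \emph{uses} it, e.g.\ to prove Proposition \ref{aafiber}\ref{simple1}), so your proposal can only be measured on its own terms. Your overall reduction is sensible: Lemma \ref{fact1} places $\pi\mu$ in Setting (B) with $\F=\G$, so Theorem \ref{cando1}\ref{f1} would conclude once you verify (a) almost additivity of $\G$ and (b) $\underline r=\overline r$ at every point with a continuous limit; and your gluing argument for (a) is essentially correct (with the usual care about extendability of fiber words and bounded multiplicity of the gluing map). Note that for (a) you indeed must argue directly, as you do: you could not instead cite Proposition \ref{aafiber}\ref{simple1}, since the paper proves that proposition by invoking the very theorem you are trying to prove.

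The genuine gap is step (b), which you correctly identify as the crux but do not prove. The sketch offered for it fails as stated when $f$ is merely continuous: the representation of $\pi\mu[y_1\dots y_n]$ as $\mathcal{L}_{y_1}\cdots\mathcal{L}_{y_n}$ applied to a fixed function and summed over the fiber is \emph{exact} only when $f$ admits a Ruelle--Perron--Frobenius theory (H\"older, or at least a Walters-type condition); for a general continuous $f$ possessing a Gibbs measure one gets this representation only up to the uniform Gibbs constant $C_0$. A multiplicative error confined to $[C_0^{-1},C_0]$ is harmless for Gibbs-type inequalities, but it is fatal here, because it does not cancel in the ratio $\pi\mu[y_1\dots y_n]/\pi\mu[y_2\dots y_n]$: Birkhoff contraction in the Hilbert projective metric then yields convergence of the normalized \emph{operator} expression, while the true ratio can still oscillate by a factor as large as $C_0^{2}$, so neither the existence of the limit $\psi(y)$ nor its continuity follows. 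This cannot be waved away, since the existence of that limit is part of the assertion of Theorem \ref{bestP}, whereas Theorem \ref{cando1} simply \emph{assumes} $\underline r=\overline r$ (and even then returns only a Borel measurable $r$, not a continuous one). Closing this gap --- for instance by first observing that the uniform Gibbs inequality forces $\sup_n \mathrm{var}_n(S_nf)\le 2\log C_0$ (so $f$ lies in the Bowen class) and then running an inhomogeneous contraction argument on the genuine conditional measures rather than on an approximating cocycle --- is precisely the content of Piraino's paper, and it is the part your proposal leaves open.
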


Corollary  \ref{basic0} enables us to consider conditions of images of Gibbs measures to be Gibbs for continuous functions regardless the existence of the fiber-wise sub-positive mixing property.
 \begin{prop} \label{aafiber}
Let $\pi: X\to Y$ be a one-block factor map where $X$ is a topologically mixing shift of finite type and $Y$ is a subshift.
Let $\mu$ be the unique invariant Gibbs measure for $f\in C(X)$ and $\G$ be the subadditive sequence associated to the relative pressure function $P(\sigma_X, \pi, f)$. 
\begin{enumerate}[label=(\roman*)]
\item \label{simple1}
If $\pi$ is fiber-wise sub-positive mixing, then $\G$ is almost additive on $Y$.
\item \label{simple2} 
The almost additivity of $\G$ does not imply the fiber-wise sub-positive mixing property of a factor map. There exist a one-block factor map not fiber-wise sub-positive mixing and $f\in C(X)$ such that the image $\pi\mu$ is 
an invariant Gibbs measure for some $g\in C(Y)$ where $ \G$ is almost additive on $Y$. 
\end{enumerate}
\end{prop}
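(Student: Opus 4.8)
The plan is to treat the two parts by quite different routes: part \ref{simple1} follows abstractly from results already available, whereas part \ref{simple2} requires exhibiting an explicit witnessing example. For \ref{simple1} I would avoid a direct combinatorial estimate on the fibers and instead exploit the fact that $\pi\mu$ carries two Gibbs descriptions. First, since $X$ is a topologically mixing shift of finite type and $\pi$ is fiber-wise sub-positive mixing, Theorem \ref{bestP} provides a function $\psi\in C(Y)$ for which $\pi\mu$ is an invariant Gibbs measure, so there are $P'\in\R$ and $C'>0$ with $(C')^{-1}\le \pi\mu[y_1\dots y_n]/(e^{-nP'}e^{(S_n\psi)(y)})\le C'$ for all $y,n$. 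Second, by Lemma \ref{fact1} the same measure $\pi\mu$ is the unique invariant Gibbs measure for $\G$, giving $P\in\R$ and $C>0$ with $C^{-1}\le \pi\mu[y_1\dots y_n]/(e^{-nP}g_n(y))\le C$. Dividing the two chains eliminates $\pi\mu[y_1\dots y_n]$ and shows that $g_n(y)$ and $e^{(S_n\psi)(y)}e^{n(P-P')}$ are comparable uniformly in $y$ and $n$.

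The next point is that the discrepancy in normalizations can be absorbed into the potential. Setting $\tilde\psi:=\psi+(P-P')\in C(Y)$, the previous step yields a single constant $D\ge 1$ with $D^{-1}e^{(S_n\tilde\psi)(y)}\le g_n(y)\le D\,e^{(S_n\tilde\psi)(y)}$ for every $y\in Y$ and $n\in\N$. Since $\{S_n\tilde\psi\}$ is additive, i.e.\ $e^{(S_{n+m}\tilde\psi)(y)}=e^{(S_n\tilde\psi)(y)}e^{(S_m\tilde\psi)(\sigma_Y^n y)}$, inserting this comparison into $g_{n+m}(y)$ and applying the two-sided bound in each factor gives $D^{-3}g_n(y)g_m(\sigma_Y^n y)\le g_{n+m}(y)\le D^{3}g_n(y)g_m(\sigma_Y^n y)$, which is exactly almost additivity of $\G$ with constant $C=3\log D$. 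Note that no separate verification of the pressure equality $P=P'$ is needed, precisely because of this absorption.

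For \ref{simple2} the strategy is to produce an example of the special type studied in Section \ref{fiberaaR} and in \cite{S1, Y1}: a one-block map $\pi:X\to Y$ with $X\subseteq\{1,2,3\}^{\N}$ a topologically mixing shift of finite type, $Y\subseteq\{1,2\}^{\N}$, and $\pi^{-1}\{1\}=\{1\}$, $\tilde\pi(2)=\tilde\pi(3)=2$. I would choose the transition rule so that the two lifts $2$ and $3$ of the symbol $2$ lie in separate ``tracks'' that communicate only through the symbol $1$ (for instance, by forbidding the transitions $2\leftrightarrow3$). Then for the $Y$-word $w=2\cdots2\in B_k(Y)$ the preimages $2\cdots2$ and $3\cdots3$ cannot be joined into a preimage of $w$ beginning with $2$ and ending with $3$, so $\pi$ fails fiber-wise sub-positive mixing for every $k$; this is a direct check on the allowed words. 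Taking $f$ constant, so that $\mu$ is the measure of maximal entropy and an invariant Gibbs measure for $f$, it remains to verify that $\pi\mu$ is an invariant Gibbs measure for some $g\in C(Y)$ and that $\G$ is almost additive. By Corollary \ref{basic0} (with $g$ as in Corollary \ref{aplrelative}) these two assertions are equivalent once the defining limit for $g$ exists and is continuous, so the task reduces to the matrix computation of Section \ref{nicephd}: express $g_n(y)$, up to the bounded-variation constant of $\G$, as a sum of entries of a product of nonnegative matrices indexed by $y_1,\dots,y_n$, and check that this product is quasi-multiplicative, equivalently that $\pi\mu$ is Markov.

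I expect the main obstacle to be \ref{simple2}, since its two requirements pull in opposite directions: fiber-wise sub-positive mixing must be destroyed, so the fibers over runs of the symbol $2$ must be disconnected, yet the weighted fiber counts must remain quasi-multiplicative so that the image stays Gibbs for a continuous function. Balancing these is exactly the content of the explicit examples, and rather than attempt an ad hoc estimate I would certify the almost additivity of $\G$ through the two-coordinate analysis of Section \ref{nicephd} (as in Theorem \ref{phd1}) together with the computations of \cite{S1, Y1}.
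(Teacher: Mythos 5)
Your proposal is correct and follows essentially the same route as the paper: for part (i) you combine Theorem \ref{bestP} with Lemma \ref{fact1} and deduce almost additivity of $\G$ from the two Gibbs descriptions of $\pi\mu$ — the paper cites \cite[Corollary 6.11 (iii)]{Y4} for exactly this last step, which you instead verify directly by dividing the two Gibbs inequalities and absorbing the pressure difference into the potential. For part (ii) your sketched example (lifts $2$ and $3$ with the transitions $2\leftrightarrow 3$ forbidden, so the fibers over $2^k$ are disconnected, together with $f=0$ and the fiber-count/matrix verification of almost additivity feeding into Theorem \ref{phd1}) is precisely the paper's Example \ref{simpleE}.
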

\begin{proof}
If $\pi$ is fiber-wise sub-positive mixing, by Theorem  \ref {bestP}  there exists $g\in C(Y)$ for which $\pi\mu$ is Gibbs.  It is easy to see that $\F$ is almost additive because $\pi\mu$ is Gibbs for $g$ and $\G$  \cite[Corollary 6.11 (iii)]{Y4}.  
Example \ref{simpleE} in Section \ref{examplesagain} proves \ref{simple2}.
 \end{proof}
 
 In the rest of the paper, we study necessary and sufficient conditions for images of the Gibbs measures for continuous functions to be Gibbs by an explicit construction, under the setting below.\\

\noindent \textbf{[Setting (C)]}. Let $X\subseteq \{1,2,3\}^{\N}$ and $Y\subseteq\{1,2\}^{\N}$ be irreducible shifts of finite type such that $Y$ is the full shift or has the transition matrix $\begin{pmatrix}
0 & 1 \\
1 & 1 
\end{pmatrix}$. Let $\pi: X\rightarrow Y$ be a one-block factor map such that $\pi(1)=1, 
\pi(2)=\pi(3)=2$. Let $f$ be a function of two coordinates and define $f[ij]:=f(x)$ if $x\in [ij]$.
Let $\G$ be the sequence associated to the relative pressure function $P(\sigma_X, \pi, f)$. 
\\

Under Setting (C), let $A=(a_{ij})_{3\times 3}$ be the transition matrix of $X$ and define 
the $3\times 3$ nonnegative matrix $M$ with entries $M(i,j)$ given by 
$M(i,j) =e^{f[ij]}a_{ij}$ for $i,j=1,2,3$. 
Given each $b_1b_2\in B_2(Y)$, let $M_{b_1b_2}$ be the $3 \times 3$ nonnegative  matrix
  with entries  $M_{b_1b_2}(i,j)$ defined by 
 \begin{equation}
    M_{b_1b_2}(i,j) =
    \begin{cases}
      e^{f[ij]}  & \text{if  $ij\in B_2(X)$ and $\pi(ij)=b_1b_2$} \\
      0       & \text{otherwise.}
    \end{cases}
    \end{equation}
 Define the submatrix  $M_{22}\vert_{\pi^{-1}(22)}:=(M_{22}(i,j))_{i, j\in \{2,3\}}$.

\begin{thm}\label{phd1}
Under Setting (C), let $\mu$ be an invariant Gibbs measure for $f$. 
Then the following hold.
\begin{enumerate}[label=(\roman*)]
\item \label{g1}
 The sequence $\G$ is almost additive on $Y$ if and only if there exists a Borel measurable function $\hat g$ on $Y$ for which $\pi\mu$ is a unique invariant Gibbs measure. If  $\G$ is merely weakly almost additive on $Y$, then there exists a Borel measurable  function $\hat g$ on $Y$ for which $\pi\mu$ is a unique invariant weak Gibbs measure.

\item \label{revise1}If $M_{22}\vert_{\pi^{-1}(22)}\neq \begin{pmatrix}
0 & a_1 \\
a_2 & 0 
\end{pmatrix}$,  $a_1, a_2> 0$, then \ref{g1} holds for a function $\hat g\in C(Y)$.

\item \label{revise2}If $M_{22}\vert_{\pi^{-1}(22)}= \begin{pmatrix}
0 & a_1 \\
a_1 & 0 
\end{pmatrix}$, $a_1>0$, and $M$ satisfies  $M_{21}=M_{23}$ or $M_{12}=M_{13}$, then  \ref{g1} holds for a function $\hat g\in C(Y)$.
 
\end{enumerate}
\end {thm}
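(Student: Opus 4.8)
The plan is to reduce the whole statement to the asymptotics of products of the finitely many nonnegative matrices $M_{b_1b_2}$, $b_1b_2\in B_2(Y)$, exactly as indicated in Section 1. First I would record the matrix form of $\G$. Because $f$ depends on two coordinates, for $y=y_1y_2\dots\in Y$ one checks, following \cite{CU1, Yo}, that
\[ g_n(y)\asymp \mathbf 1^{T}M_{y_1y_2}M_{y_2y_3}\cdots M_{y_{n-1}y_n}\mathbf 1, \]
the sum of all entries of $\Pi_n(y):=M_{y_1y_2}\cdots M_{y_{n-1}y_n}$; the multiplicative error hidden in $\asymp$ comes from the supremum and the right-hand boundary term in (\ref{beq}) and is uniformly bounded because $\G$ has bounded variation. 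Writing $\Pi_n(y)=M_{y_1y_2}\Pi_n'(y)$ with $\Pi_n'(y)=M_{y_2y_3}\cdots M_{y_{n-1}y_n}$ and $c_n:=\Pi_n'(y)\mathbf 1$, the increment defining $g$ in (\ref{defineg}) becomes
\[ \frac{g_n(y)}{g_{n-1}(\sigma_Y y)}\asymp\frac{\mathbf 1^{T}M_{y_1y_2}c_n}{\mathbf 1^{T}c_n}. \]
Thus $g(y)$ exists precisely when the normalized columns $c_n/\lVert c_n\rVert$ converge to a direction $\hat c(\sigma_Y y)$, and then $g(y)=\log\big(\mathbf 1^{T}M_{y_1y_2}\hat c(\sigma_Y y)\big)-\log\big(\mathbf 1^{T}\hat c(\sigma_Y y)\big)$, with $\hat c$ satisfying the pullback relation $\hat c(z)\propto M_{z_1z_2}\hat c(\sigma_Y z)$. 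Since $\pi^{-1}\{1\}=\{1\}$, every symbol-$1$ transition collapses to the scalar $M(1,1)$ and resets the direction on $\{2,3\}$ through the entry vector $\mathbf p=(M(1,2),M(1,3))$ and exit vector $\mathbf q=(M(2,1),M(3,1))$, whereas along a run of $2$'s the direction propagates by powers of $B:=M_{22}\vert_{\pi^{-1}(22)}$. Hence everything is controlled by $B$, and I would analyze $B^{\ell}$ through its real Jordan canonical form.

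For part \ref{g1} I would argue from the machinery already in place. By Lemma \ref{fact1}, $\pi\mu$ is the unique invariant Gibbs measure for $\G$. If $\G$ is almost (resp. weakly almost) additive, then the spectral data of the products $\Pi_n$ (the leading eigenvalue and eigendirection extracted from the Jordan form) produce a Borel function $\hat g$ with $e^{(S_n\hat g)(y)}\asymp g_n(y)$ uniformly; equivalently, whenever the increment limit exists one takes $\hat g=g$ from Corollary \ref{aplrelative} and invokes Corollary \ref{basic0} to conclude that $\pi\mu$ is the unique invariant Gibbs (resp. weak Gibbs) measure for $\hat g$. This gives the forward implication and the weak statement. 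For the converse, suppose $\pi\mu$ is a unique invariant Gibbs measure for some Borel $\hat g$. Comparing its two Gibbs estimates — one for $\G$, one for $\hat g$ — yields $g_n(y)\asymp e^{n(P-P')}e^{(S_n\hat g)(y)}$ with uniform constants; since $\{S_n\hat g\}$ is additive for any Borel $\hat g$, this comparability forces (\ref{aa1}) and (\ref{aa2}) with a single constant, i.e. $\G$ is almost additive. This is the content behind \cite[Corollary 6.11]{Y4} and settles \ref{g1}.

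The substance is in \ref{revise1} and \ref{revise2}, which by the reduction above amount to showing that $g$ exists everywhere and is continuous on $Y$; once this is known, Corollary \ref{basic0} upgrades $\hat g$ in \ref{g1} from Borel to continuous. In case \ref{revise1}, $B$ is not anti-diagonal, so by its real Jordan form $B$ is either primitive (a positive diagonal entry together with positive off-diagonal entries) or reducible/triangular; in every such case $B^{\ell}/\rho(B)^{\ell}$ converges, as $\ell\to\infty$, to a rank-one matrix along a single Perron direction (here $\rho(B)$ denotes the spectral radius), the products $M_{y_1y_2}M_{y_2y_3}\cdots$ contract projectively (Birkhoff), and $\hat c$ exists and depends continuously — indeed H\"older-continuously — on the tail $\sigma_Y y$, so $g\in C(Y)$. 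In case \ref{revise2}, $B=a_1\left(\begin{smallmatrix}0&1\\1&0\end{smallmatrix}\right)$ is a scaled coordinate swap: its powers alternate between $a_1^{\ell}I$ and $a_1^{\ell}$(swap), so $\hat c$ genuinely oscillates with the parity of the adjacent $2$-run length, alternating between $\mathbf q$ and its coordinate swap. Here the denominator $\mathbf 1^{T}\hat c$ is automatically parity-free because $\mathbf 1$ is swap-invariant, and the symmetry hypothesis ($M_{12}=M_{13}$ or $M_{21}=M_{23}$) supplies a coordinate-swap symmetry of the entry (resp. exit) boundary data that is exactly what makes the numerator $\mathbf 1^{T}M_{y_1y_2}\hat c$ parity-free as well. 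Consequently the two parities give the same value in (\ref{defineg}), the limit exists everywhere, and $g\in C(Y)$; combining with \ref{g1} and Corollary \ref{basic0} yields $\hat g\in C(Y)$.

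I expect the main obstacle to be the period-$2$ analysis of case \ref{revise2}: one must control the parity oscillation of $\hat c$ uniformly over all run lengths and verify that the swap symmetry cancels it exactly in the ratio (\ref{defineg}), while checking that the right-hand boundary term and the two admissible forms of $Y$ (the full shift, where runs of $1$'s carry powers of the scalar $M(1,1)$, and the golden-mean-type $Y$, where the $1$'s are isolated) do not reintroduce parity dependence. This is sharp: in the complementary anti-diagonal cases — either $a_1\neq a_2$, where already at $2^{\infty}$ the ratio $g_n/(g_{n-1}\circ\sigma_Y)$ oscillates between two distinct values, or $a_1=a_2$ without the symmetry hypothesis, where the oscillation persists at points such as $12^{\infty}$ — the increment limit (\ref{defineg}) fails to exist or $g$ is discontinuous, which is precisely why these cases are deferred in Corollary \ref{best} and Lemma \ref{exception}. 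A secondary technical point is the uniform treatment, via the Jordan form, of the degenerate reducible sub-cases of $B$ together with the bookkeeping of the boundary factor in (\ref{beq}).
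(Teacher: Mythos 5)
Your overall strategy is the paper's own: identify $g_n$ (up to uniformly bounded factors) with sums of entries of products of the matrices $M_{b_1b_2}$, analyze long runs of $2$'s through the real Jordan form of $B:=M_{22}\vert_{\pi^{-1}(22)}$, and transfer Gibbs properties via Corollary \ref{basic0}; your converse direction in \ref{g1} (comparing the Gibbs estimates for $\G$ and for $\hat g$ to force almost additivity) is also exactly the paper's. However, there is a genuine gap in your proof of the forward direction of \ref{g1}: the statement must cover the anti-diagonal case $B=\left(\begin{smallmatrix}0&a_1\\a_2&0\end{smallmatrix}\right)$, and your argument does not. You assert that the anti-diagonal cases are ``deferred''; but only the \emph{continuity} refinements \ref{revise1}--\ref{revise2} exclude them, while the Borel statement \ref{g1} does not. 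When $a_1\neq a_2$ the increment limit (\ref{defineg}) genuinely fails to exist at $2^{\infty}$ and $12^{\infty}$ (the ratios oscillate with parity between $2a_1a_2/(a_1+a_2)$ and $(a_1+a_2)/2$, resp.\ between $(z+w)/2$ and $(za_1+wa_2)/(a_1+a_2)$), even though the sequence can perfectly well be almost additive there. So ``take $\hat g=g$ and invoke Corollary \ref{basic0}'' is unavailable, and your phrase ``the spectral data produce a Borel $\hat g$'' is not backed by a construction. The paper fills this with Lemma \ref{exceptionF}: it defines $\hat h_2$ to agree with the increment function away from $\{2^{\infty},12^{\infty}\}$, assigns the period-averaged value $\tfrac12\log(a_1a_2)$ at those two points, and then verifies by direct computation the comparability $e^{-C_n}\leq h_n(y)/e^{(S_n\hat h_2)(y)}\leq e^{C_n}$ along orbits falling onto $2^{\infty}$, with uniform constants when $\H$ is almost additive. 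Without a substitute for this step, \ref{g1} is unproven precisely in this case.

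There is a second gap in \ref{revise1}: your claim that every non-anti-diagonal $B$ has $B^{\ell}/\rho(B)^{\ell}$ converging to a rank-one matrix, whence projective (Birkhoff) contraction and $g\in C(Y)$, is false. The scalar case $B=\alpha I$ is allowed in \ref{revise1} and exhibits no contraction at all; and in the nontrivial Jordan-block case the boundary vectors can annihilate the dominant direction (the degeneracy $(wc+az)(-c\bar x+a\bar y)=0$), so that subleading terms govern the limits and finite runs of $2$'s need not see the same direction as the infinite run. In both situations the paper shows (Lemma \ref{conti12}) that $\hat h$ is continuous at $12^{\infty}$ \emph{if and only if} $(\bar x-\bar y)(w-z)=0$; otherwise it is genuinely discontinuous there despite $B$ not being anti-diagonal. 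Accordingly, the paper's proof of \ref{revise1} does not establish continuity of the increment function; it instead performs a single-point surgery (Lemma \ref{lastlemma}): redefine the value at $12^{\infty}$ to obtain $\hat h_1\in C(Y)$ and check that the ratio $e^{(S_n\hat h)(y)}/e^{(S_n\hat h_1)(y)}$ is uniformly bounded (it equals $(z+w)(\bar x+\bar y)/\bigl(2(\bar xz+\bar yw)\bigr)$ once the orbit passes through $12^{\infty}$), so that $\hat h_1$ and $\hat h$ have the same invariant (weak) Gibbs measures. Your proposal would prove \ref{revise1} only under an extra hypothesis excluding these degenerate sub-cases. By contrast, your parity/swap-symmetry analysis of \ref{revise2} is essentially the computation of Lemma \ref{exception} and is sound.
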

\begin{proof}
See the proof in Section \ref{nicephd}.
\end{proof}
\begin{rem} Let $A=(a_{ij})_{k\times k}$ be a transition matrix of an irreducible shift of finite type $X$ over $k$ symbols.  Let  
 $P=(P_{ij})$ be a stochastic matrix where 
$P_{ij}>0$ exactly when $a_{ij}>0$ and let $p=(p_1, \dots p_k)$ be the unique probability vector 
with $pP=p$. 
Then a one-step Markov measure $\mu\in M(X,\sigma_X)$ is defined by
$\mu[x_1\dots x_k]=p_1P_{12} \dots P_{k-1k}.$
If a Markov measure $\mu$ is defined by a $3 \times 3$ stochastic matrix $P=(P_{ij})$, 
set $f(x):=\log P_{i j}$ for $x\in [i,j]$. Then $\mu$ is an equilibrium state for $f$ with the Gibbs property. Hence 
Theorem \ref{phd1} is valid for any one-step Markov measure $\mu$ on $X$.
\end{rem}

\begin{coro}\label{phd2}
Under Setting (C), let $\mu$ be the measure of maximal entropy for $\sigma_X$, i.e., 
$h_{\mu}(\sigma_X)=\sup\{h_{\nu}(\sigma_X):\nu\in M(X,\sigma_X)\}$. 
 Let $\Phi$ be the sequence associated to the relative pressure function $P(\sigma_X, \pi, 0)$.  
Then $\Phi$ is almost additive on $Y$ if and only if there exists a function $\hat g \in C(Y)$ for 
which $\pi\mu$ is a unique  invariant Gibbs measure. If  $\Phi$ is merely weakly almost additive on $Y$, then there exists $\hat g\in C(Y)$ for which $\pi\mu$ is a unique invariant weak Gibbs measure. 
\end{coro}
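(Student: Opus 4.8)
The plan is to derive the statement from Theorem \ref{phd1} by specializing to the zero potential. The measure of maximal entropy $\mu$ for an irreducible shift of finite type is the Parry measure, a one-step Markov measure, and it is the unique invariant Gibbs measure and equilibrium state for the constant function $f\equiv 0$ (with $P=\log\lambda$, where $\lambda$ is the Perron eigenvalue of the transition matrix $A$ of $X$), since $\mu[x_1\dots x_n]\asymp\lambda^{-n}$. As $f\equiv 0$ is trivially a function of two coordinates and $\Phi$ is by definition the sequence $\G$ associated to $P(\sigma_X,\pi,f)$ for this $f$, Theorem \ref{phd1} applies verbatim with $M=A$. Part \ref{g1} then yields at once the asserted equivalence ``$\Phi$ almost additive $\iff$ $\pi\mu$ is a unique invariant Gibbs measure for some Borel measurable $\hat g$'' together with the weak-Gibbs conclusion in the weakly almost additive case. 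It remains only to promote $\hat g$ to a continuous function.

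For this I would use parts \ref{revise1} and \ref{revise2}. When $f\equiv 0$ the matrix $M$ coincides with $A$, so its entries lie in $\{0,1\}$ and $M_{22}\vert_{\pi^{-1}(22)}=\begin{pmatrix} a_{22} & a_{23}\\ a_{32} & a_{33}\end{pmatrix}$ is a $0$--$1$ matrix. If this submatrix is not of the excluded form $\begin{pmatrix} 0 & a_1\\ a_2 & 0\end{pmatrix}$ with $a_1,a_2>0$, then \ref{revise1} gives $\hat g\in C(Y)$ directly. The only remaining possibility is $a_{22}=a_{33}=0$ and $a_{23}=a_{32}=1$, that is $M_{22}\vert_{\pi^{-1}(22)}=\begin{pmatrix} 0 & 1\\ 1 & 0\end{pmatrix}$, which is exactly the form treated in \ref{revise2} with $a_1=1$.

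The crux is then to verify that Setting (C) forces the extra hypothesis of \ref{revise2}, namely $M_{21}=M_{23}$ or $M_{12}=M_{13}$, i.e.\ (since $a_{23}=1$) $a_{21}=1$ or $a_{12}=a_{13}$. In this configuration the symbols $2,3$ form a $2$-cycle $2\leftrightarrow 3$, so a lift of a block $1\,2^{k}\,1\in B(Y)$ must enter the cycle from symbol $1$ (using $a_{12}$ or $a_{13}$), alternate between $2$ and $3$, and leave it to symbol $1$ (using $a_{21}$ or $a_{31}$), the parity of $k$ deciding whether entry and exit occur on the same symbol. Because $Y$ is the full shift or the shift of finite type with matrix $\begin{pmatrix} 0 & 1\\ 1 & 1\end{pmatrix}$, both blocks $121$ and $1221$ lie in $B(Y)$ and hence lift through $\pi$. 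Assuming toward a contradiction that $a_{21}=0$ and $a_{12}\neq a_{13}$, liftability of $121$ (odd run) forces $a_{13}=a_{31}=1$, while liftability of $1221$ (even run) forces $a_{12}=a_{31}=1$; together these give $a_{12}=a_{13}=1$, contradicting $a_{12}\neq a_{13}$. Hence $a_{21}=1$ or $a_{12}=a_{13}$, so \ref{revise2} applies and $\hat g$ can be taken in $C(Y)$.

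I expect this last combinatorial step to be the main obstacle, and it is where the strength of Setting (C) is used: the hypothesis that $Y$ is the prescribed shift of finite type (rather than merely $Y=\pi(X)$) is precisely what excludes, for the measure of maximal entropy, the genuinely discontinuous situation of Corollary \ref{best} and Lemma \ref{exception}. Once the two short liftability constraints above are recorded, continuity of $\hat g$ follows in every case, and combining this with part \ref{g1} of Theorem \ref{phd1} completes the proof.
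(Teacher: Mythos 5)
Your proposal is correct, and its skeleton is the same as the paper's: specialize Theorem \ref{phd1} to $f=0$ (so $M$ is the transition matrix $A$ of $X$), get the Borel-measurable equivalence from part \ref{g1}, invoke part \ref{revise1} when $M_{22}\vert_{\pi^{-1}(22)}$ is not antidiagonal, and reduce the only remaining $0$--$1$ possibility, $a_{22}=a_{33}=0$, $a_{23}=a_{32}=1$, to part \ref{revise2}. Where you genuinely diverge is in how the extra hypothesis of \ref{revise2} is verified in that last case. The paper simply cites Lemma \ref{exception}, whose proof asserts that having both $\bar x\neq\bar y$ and $z\neq w$ (i.e.\ $a_{21}\neq a_{31}$ and $a_{12}\neq a_{13}$) contradicts the irreducibility of $X$; you instead derive the contradiction from surjectivity of $\pi$ onto the prescribed $Y$, via liftability of the blocks $121$ and $1221$. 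Your route is the more solid one: irreducibility alone does not suffice, since for instance $a_{12}=a_{21}=0$, $a_{13}=a_{31}=1$ together with the antidiagonal block gives an irreducible $X$ whose image is a proper subshift of the golden-mean shift, so what really does the work is the standing assumption that $Y$ is the full shift or the golden-mean shift and $\pi(X)=Y$ --- exactly what you use. One small caution: you verify the hypothesis of \ref{revise2} as literally printed ($M_{21}=M_{23}$, which you read as $a_{21}=a_{23}=1$), whereas comparison with Lemma \ref{exception} shows the intended condition is $\bar x=\bar y$ or $z=w$, i.e.\ $a_{21}=a_{31}$ or $a_{12}=a_{13}$ (the subscript $23$ is evidently a typo for $31$); fortunately your liftability analysis excludes every configuration violating either version --- all admissible patterns of $(a_{12},a_{13},a_{21},a_{31})$ satisfy $a_{21}=a_{31}$ or $a_{12}=a_{13}$ --- so your proof goes through unchanged after this cosmetic adjustment.
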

\begin{proof}
Set $f=0$ in Setting (C) in Theorem \ref{phd1}. Then $M_{22}\vert_{\pi^{-1}(22)}= \begin{pmatrix}
0 & 1 \\
1 & 0 
\end{pmatrix}$. By Lemma \ref{exception}, the assumption of Theorem \ref{phd1} \ref{revise2}  holds. 
 

\end{proof}

\begin{coro}\label{best}
Under Setting (C),  the following hold. \begin{enumerate}[label=(\roman*)]
\item \label{rr1}
The sequence $\G$ is almost additive if and only if there exists a Borel measurable function $\hat g$ on $Y$ for which there exists a unique invariant  Gibbs measure with the following property
\begin{equation}\label{special}\lim_{n\rightarrow \infty} \frac{1}{n} 
\lVert \log g_{n}-S_n \hat g \rVert_{\infty}=0.
\end{equation} If $\G$ is merely weakly almost additive, then there exists a Borel measurable function $\hat g$ satisfying (\ref{special}) for which there is a unique invariant weak Gibbs measure.
\item \label{r1}
Under the assumption of Theorem \ref{phd1}\ref{revise1} or \ref{revise2}, \ref{rr1} holds for  $\hat g\in C(Y)$. 
\end{enumerate}
\end{coro}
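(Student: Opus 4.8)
The plan is to read off Corollary \ref{best} from Theorem \ref{phd1} together with the explicit construction behind Corollary \ref{aplrelative} and Theorem \ref{go1}. First I would note that, under Setting (C), the two‑coordinate function $f$ is locally constant on the irreducible shift of finite type $X$, which therefore has the weak specification property; hence $f$ possesses a unique invariant Gibbs measure $\mu$, the image $\pi\mu$ is well defined, and Theorem \ref{phd1} is available. The crucial remark is that (\ref{special}) is nothing but (\ref{cuneothm1}) for the choice $\F=\G$, $f_n=g_n$, $\hat f=\hat g$; so the only distance between Corollary \ref{best}\ref{rr1} and Theorem \ref{phd1}\ref{g1} is that $\hat g$ is now required to satisfy (\ref{special}) and that ``$\pi\mu$'' is relaxed to ``some unique invariant Gibbs measure.''

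For the forward implication I would assume $\G$ almost additive and take for $\hat g$ the function $g$ of (\ref{defineg}) produced in the proof of Theorem \ref{phd1}. Since $\G$ has bounded variation, Theorem \ref{go1}\ref{v1} (equivalently Corollary \ref{aplrelative}\ref{almostcase}) guarantees that this $g$ satisfies (\ref{special}), while Theorem \ref{phd1}\ref{g1} identifies $\pi\mu$ as its unique invariant Gibbs measure; this is exactly the assertion of \ref{rr1}. The weakly almost additive case is handled identically, using Theorem \ref{go1}\ref{v2} for (\ref{special}) and the weak‑Gibbs half of Theorem \ref{phd1}\ref{g1} to obtain $\pi\mu$ as the unique invariant weak Gibbs measure for $g$.

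The converse carries the real content. Given an arbitrary Borel $\hat g$ that satisfies (\ref{special}) and admits a unique invariant Gibbs measure $\nu$, I must show $\nu=\pi\mu$ before Theorem \ref{phd1}\ref{g1} can be applied in reverse. Combining the Gibbs bounds of $\nu$ for $\hat g$ with (\ref{special}) --- the computation is the one in the proof of Lemma \ref{bounded}\ref{wp1}, now read with the roles of $g_n$ and $e^{S_n\hat g}$ interchanged --- shows that $\nu$ is an invariant weak Gibbs measure for the asymptotically additive sequence $\G$. The weak Gibbs bounds for $\G$, together with the variational principle recorded in Section \ref{back}, then give $h_{\nu}(\sigma_Y)+\lim_{n}\frac1n\int\log g_n\,d\nu=P(\G)$, the topological pressure of $\G$, so that $\nu$ is an equilibrium state for $\G$; since $\pi\mu$ is the unique such state by Lemma \ref{fact1}, we conclude $\nu=\pi\mu$. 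Then Theorem \ref{phd1}\ref{g1}, applied backwards, gives the almost additivity of $\G$. I expect this identification $\nu=\pi\mu$ to be the main obstacle: one cannot invoke uniqueness of the Gibbs measure of $\hat g$ directly, because $\pi\mu$ is in general only weak Gibbs --- not Gibbs --- for $\hat g$, so the two measures must be matched through their common equilibrium‑state characterization for $\G$.

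Finally, for part \ref{r1} I would observe that under the hypotheses of Theorem \ref{phd1}\ref{revise1} or \ref{revise2} the same function $g$ of (\ref{defineg}) used above is in fact continuous; thus the forward construction already yields a $\hat g\in C(Y)$ satisfying (\ref{special}) with the stated (weak) Gibbs property, and the converse argument is unchanged. This establishes \ref{rr1} with $\hat g\in C(Y)$.
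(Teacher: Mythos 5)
Your reverse implication is correct and is essentially the paper's own argument: you identify the unique invariant Gibbs measure $\nu$ of $\hat g$ with $\pi\mu$ by showing that $\nu$ is an equilibrium state for $\G$ and invoking the uniqueness from Lemma \ref{fact1}, and then read Theorem \ref{phd1}\ref{g1} backwards. (The paper gets ``$\nu$ is an equilibrium state for $\G$'' by noting that (\ref{special}) forces $\G$ and $\hat g$ to share equilibrium states, whereas you first transfer the Gibbs bounds of $\nu$ to weak Gibbs bounds for $\G$; both routes rest on the same variational-principle facts, and your caution that $\nu$ cannot be matched to $\pi\mu$ directly through the Gibbs property of $\hat g$ is exactly right.)

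The gap is in your forward direction. You take $\hat g$ to be ``the function $g$ of (\ref{defineg})'' and cite Theorem \ref{go1}\ref{v1} (equivalently Corollary \ref{aplrelative}\ref{almostcase}) to get (\ref{special}). But those statements carry the standing hypothesis that the limit $\lim_{n\to\infty}\log\bigl(g_n(y)/g_{n-1}(\sigma_Y y)\bigr)$ exists at \emph{every} $y\in Y$, and neither Setting (C) nor the assumed almost additivity of $\G$ gives this. In fact the proof of Theorem \ref{phd1} never produces the function (\ref{defineg}): it replaces $\G$ by the uniformly comparable sequence $\H=\{\log h_n\}_{n=1}^{\infty}$ (Proposition \ref{connectgh}), computes the ratios $h_n(y)/h_{n-1}(\sigma_Y y)$ by matrix products, and even for $\H$ the pointwise limit can fail: when $M_{22}\vert_{\pi^{-1}(22)}=\begin{pmatrix}0 & a_1\\ a_2 & 0\end{pmatrix}$ with $a_1\neq a_2$, the ratios $h[2^{n+1}]/h[2^{n}]$ oscillate between $(a_1+a_2)/2$ and $2a_1a_2/(a_1+a_2)$, which is precisely why Lemma \ref{exceptionF} introduces the modified Borel function $\hat h_2$. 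So the function you name need not be well defined, and the results you invoke do not apply. The repair is the paper's: take $\hat g$ to be the function that Theorem \ref{phd1} actually produces ($\hat h$, $\hat h_1$ or $\hat h_2$), and obtain (\ref{special}) either from (\ref{weakGh}) (resp.\ (\ref{modify})) combined with the uniform comparability $g_n\asymp h_n$ of Proposition \ref{connectgh}\ref{11}, or, as the paper phrases it, immediately from the fact that $\pi\mu$ is Gibbs for both $\G$ (Lemma \ref{fact1}) and $\hat g$ (Theorem \ref{phd1}), since two Gibbs inequalities for the same measure force $\log g_n-S_n\hat g$ to stay bounded. The same correction is needed in your part \ref{r1}: under the hypotheses of Theorem \ref{phd1}\ref{revise1} or \ref{revise2} the continuous function is $\hat h$ or $\hat h_1$ of Lemma \ref{lastlemma}, not (\ref{defineg}).
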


\begin{proof}
Recall from Lemma \ref{fact1} that if $\mu$ is the unique Gibbs measure for $f$ then $\pi\mu$ is the unique Gibbs measure for $\G$ and it is the unique equilibrium state for $\G$.  
If $\G$ is almost additive, we take $\hat g$ from Theorem \ref{phd1}\ref{rr1}. 
(\ref{special}) follows immediately since $\pi\mu$ is Gibbs for $\G$ and 
$\hat g$.
For the reverse implication, note that $\G$ and $\hat g$ have the same equilibrium states. 
If $\nu$ is a Gibbs measure 
associated to $\hat g$, then it is an equilibrium state for $\hat g$ and hence $\pi\mu=\nu$. 
Since $\pi\mu$ is a Gibbs measure for $\G$ and $\hat g$, $\G$ is almost additive. Similar arguments yield the second statement of \ref{rr1}. \ref {r1} follows from Theorem \ref{phd1}.


\end{proof}

\section{Proof of Theorem \ref{phd1}} \label{nicephd}
This section is devoted to the proof of Theorem \ref{phd1}. Given a  factor map $\pi: X \to Y$ between shifts of finite type,
define for $n\geq 2, y\in Y$, 
\begin{eqnarray} \label{useful2}
\begin{split}
h_n(y):=&\sup\{\sum_{x\in [x_1\dots x_n], \pi(x_1\dots x_n)=y_1\dots y_n}e^{(S_{n-1}f)(x)}\}\\
\end{split}
\end{eqnarray}
where $x$ is a point  chosen from each cylinder set $[x_1\dots x_n]$.
For $n=1, y\in Y$, define $h_1(y):=g_1(y)$  where $g_1$ is defined in Corollary \ref{aplrelative} and 
let  $\H:=\{\log h_n\}_{n=1}^{\infty}$ on $Y$.
\begin{prop}\label{connectgh}
Let $\pi: X \to Y$ be a factor map between shifts of finite type 
and $\G=\{\log g_n\}_{n=1}^{\infty}$ be the sequence defined  in Corollary \ref{aplrelative}. Then the following hold.
\begin{enumerate}[label=(\roman*)]
\item \label{11}
 There exists a constant $A>0$ such that for every $n\in \N, y \in Y$,
$$\frac{1}{A}\leq \frac{g_n(y)}{h_n(y)}\leq {A}.$$
\item \label{111}
$P(\sigma_X, \pi, f)(y)=\limsup _{n\rightarrow\infty} (1/n) \log {g_n}(y) =
\limsup _{n\rightarrow\infty} (1/n) \log {h_n}(y)$, 
$\mu$-almost everywhere for any  $\mu\in M(Y,\sigma_Y)$.
\item \label{12} $\G$ is almost additive (resp. weakly almost additive) if and only if 
$\H$ is almost additive (resp. weakly almost additive). 
\item \label{13}
Define $\hat {h}(y):=\lim_{n\to \infty} h_n(y)/h_{n-1}(\sigma_Y y)$ for each $y\in Y$  if the limit exists. If $\H$ is weakly almost additive, then for every $\mu\in M(Y, \sigma_Y)$,
\begin{equation}\label{key2} 
\int P(\sigma_X, \pi, f)d\mu=\lim_{n\to \infty}\frac{1}{n}\int \log g_n d\mu=\lim_{n\to \infty} \frac{1}{n}\int \log h_nd\mu=\int \hat{h} d\mu
\end{equation}
Then there exists  $C_n>0$ such that for every $n\in \N, y \in Y$,
\begin{equation}\label{weakGh}
\frac{1}{C_n}\leq \frac{h_n(y)}{e^{(S_n\hat h)(y)}}\leq {C_n}.
\end{equation}
If $\H$ is almost additive, then there exists $C>0$ such that $C_n=C$ for all $n\in\N$.
\item \label{14} If $f=0$, then  $g_n(y)=h_n(y)$ for all $n\in \N$, that is, $\G=\H$, and $\hat {h}=g$ where 
$g$ is defined in (\ref{defineg}).
\end{enumerate}
\end{prop}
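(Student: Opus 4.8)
The plan is to derive the entire proposition from the single uniform comparison in part \ref{11}: once $g_n$ and $h_n$ are known to be comparable up to a multiplicative constant independent of $n$ and $y$, parts \ref{111}, \ref{12}, and the identities in \ref{13} reduce to bookkeeping, while \ref{14} is a direct computation. So the work concentrates entirely on \ref{11}.

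To prove \ref{11} I would first record that both quantities split over cylinders. In each definition the representatives are chosen independently in the finitely many length-$n$ cylinders $[x_1\dots x_n]$ with $\pi(x_1\dots x_n)=y_1\dots y_n$, so the supremum distributes over the sum, giving $g_n(y)=\sum_{[x_1\dots x_n]}\sup_{x\in[x_1\dots x_n]}e^{(S_nf)(x)}$ and $h_n(y)=\sum_{[x_1\dots x_n]}\sup_{x\in[x_1\dots x_n]}e^{(S_{n-1}f)(x)}$, the sums ranging over those cylinders. Writing $S_nf=S_{n-1}f+f\circ\sigma_X^{n-1}$ and using $\|f\circ\sigma_X^{n-1}\|_\infty=\|f\|_\infty$ yields the pointwise bound $e^{-\|f\|_\infty}e^{(S_{n-1}f)(x)}\le e^{(S_nf)(x)}\le e^{\|f\|_\infty}e^{(S_{n-1}f)(x)}$ on each cylinder; taking the supremum over the cylinder (monotonicity, since $e^{\pm\|f\|_\infty}$ are constants) and summing gives $e^{-\|f\|_\infty}h_n(y)\le g_n(y)\le e^{\|f\|_\infty}h_n(y)$, so $A=e^{\|f\|_\infty}$ works.

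Parts \ref{111} and \ref{12} then follow formally. From $A^{-1}\le g_n/h_n\le A$ we get $|\log g_n-\log h_n|\le\log A$, hence $(1/n)|\log g_n-\log h_n|\to 0$; this makes the two $\limsup$'s in \ref{111} coincide (the first equality there being the relative-pressure identity recalled before the proposition), and it lets the fixed constant $\log A$ be absorbed into the (weak) almost-additivity constants: if $\G$ is almost additive with constant $C$ then $\H$ is almost additive with constant $C+3\log A$, and symmetrically, with $C$ replaced by $C_n$ in the weakly almost additive case. This gives \ref{12} in both directions.

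For \ref{13} I would view $\H$ as a sequence in Setting (A) on $Y$, with $\hat h$ (the pointwise limit of the \emph{log}-ratios $\log(h_n/h_{n-1}\circ\sigma_Y)$, assumed to exist, consistently with (\ref{weakGh})) playing the role of $\hat f$. Applying Lemma \ref{prego1}\ref{co2}--\ref{co1} to $\H$ produces exactly the two-sided estimate (\ref{weakGh}), with constants equal to the exponentials of the weak-almost-additivity constants of $\H$ (which vanish after division by $n$), reducing to a single constant $e^{C}$ in the almost additive case. These are precisely (\ref{cuneothm1}) for $\H$, so (\ref{propas}) gives $\lim_{n}(1/n)\int\log h_n\,d\mu=\int\hat h\,d\mu$. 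The chain (\ref{key2}) is then completed by the standard relative-pressure identity $\int P(\sigma_X,\pi,f)\,d\mu=\lim_n(1/n)\int\log g_n\,d\mu$ (the subadditive ergodic theorem applied to $\G$, as in \cite{Fe1,Y3}) and by the middle equality $\lim_n(1/n)\int\log g_n\,d\mu=\lim_n(1/n)\int\log h_n\,d\mu$, which is again immediate from \ref{11}. Finally \ref{14} is a direct computation: when $f=0$ every summand equals $1$, so both $g_n(y)$ and $h_n(y)$ count the length-$n$ cylinders over $y_1\dots y_n$, whence $\G=\H$ and $\hat h=g$ by comparing the defining limits. The only genuinely substantive step is \ref{11}, namely the per-cylinder decomposition together with the interchange of supremum and the bounded perturbation $f\circ\sigma_X^{n-1}$; everything else rests on the already-established Theorem \ref{go1} and Lemma \ref{prego1}.
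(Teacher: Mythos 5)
Your proposal is correct and follows essentially the same route as the paper: part \ref{11} comes from bounding the single extra term $f\circ\sigma_X^{n-1}$ separating $S_nf$ from $S_{n-1}f$ (the paper uses $e^{m}h_n\leq g_n\leq e^{M}h_n$ with $m=\min f$, $M=\max f$, which is your $e^{\pm\|f\|_\infty}$ bound), parts \ref{111} and \ref{12} follow formally from \ref{11}, part \ref{13} is obtained exactly as you do it, by applying Lemma \ref{prego1} (with the constants $C_n$) to $\H$ together with Theorem \ref{go1} and Corollary \ref{aplrelative}, and \ref{14} is a direct check. Your write-up merely makes explicit the per-cylinder supremum decomposition that the paper treats as immediate from the definitions.
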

\begin{proof}
For \ref{11}, let $M=\max\{f(x): x\in X\}$ and $m=\min\{f(x): x\in X\}$
It is easy to see from the definitions of $\G$ and $\H$ that we have $e^{m}h_{n}(y)\leq  g_n(y)\leq e^{M}h_{n}(y)$ for every $n\in\N$ and $y\in Y$. 
\ref {111} and \ref {12} follow from \ref{11}. Theorem \ref{go1} combined with Corollary \ref{aplrelative} and \ref{12} yields the equalities in \ref{13}. To see (\ref{weakGh}),
recall that Lemma \ref{prego1}\ref{co2} holds by setting $C=C_n$ if (\ref{aa1}) holds by setting $C=C_n$ 
where $\lim_{n\to \infty}(1/n)C_n=0$. Similarly Lemma \ref{prego1} \ref{co1} holds by 
setting $C=C_n$ if (\ref{aa2}) holds by setting $C=C_n$ 
where $\lim_{n\to \infty}(1/n)C_n=0$. 
Now define for $y \in Y$, $\underline{\hat {h}}(y):=\liminf_{n\to \infty}\log (h_n(y)/h_{n-1}(\sigma_Y(y))$  and 
$\overline{\hat {h}}(y):=\limsup_{n\to \infty}\log (h_n(y)/h_{n-1}(\sigma_Y(y))$. By setting $\underline{f}=\underline{\hat {h}}$ and $\overline{f} =\overline{\hat {h}}$ in Lemma  \ref{prego1}, we obtain the results.
\ref{14} is obvious.  
\end{proof}
\begin{coro}\label{keycoro}
Corollary  \ref{aplrelative} \ref{almostcase} \ref{walmostcase} and Corollary \ref{basic0} hold if we replace $\G$ and $g$ by $\H$ and by $\hat h$ respectively.
\end{coro}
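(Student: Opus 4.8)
The plan is to observe that every structural property used in the proofs of Corollary \ref{aplrelative} and Corollary \ref{basic0} is shared by $\H$ and by $\hat h$, and then simply to rerun those proofs with $\G$ and $g$ replaced by $\H$ and $\hat h$. The engine is again Theorem \ref{go1}, now applied with $f_n = h_n$; note that the function $\hat h$ of Proposition \ref{connectgh}\ref{13} is precisely the function that Theorem \ref{go1} constructs from the sequence $\H$, so no new construction is required.

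First I would record the preliminary facts that make Theorem \ref{go1} applicable to $\H$. The base space $Y$ inherits the weak specification property from $X$ under the factor map $\pi$, which is already implicit in the proof of Corollary \ref{aplrelative}. By Proposition \ref{connectgh}\ref{11} the sequences $\G$ and $\H$ differ only by uniformly bounded multiplicative factors, namely $1/A \le g_n(y)/h_n(y) \le A$ for all $n$ and $y$; combined with the bounded variation of $\G$ this gives at once that $\H$ has bounded variation. The same comparison shows, for every $\mu \in M(Y,\sigma_Y)$, that $|\frac{1}{n}\int \log g_n\,d\mu - \frac{1}{n}\int \log h_n\,d\mu| \le \frac{1}{n}\log A \to 0$, so $\G$ and $\H$ have identical equilibrium states and the same topological pressure, and a measure is (weak) Gibbs for $\G$ if and only if it is (weak) Gibbs for $\H$.

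Next I would invoke Proposition \ref{connectgh}\ref{12}, which says that $\H$ is almost additive (respectively weakly almost additive) exactly when $\G$ is, so the hypotheses of the two cases translate directly. Applying Theorem \ref{go1}\ref{v1} to $\H$ in the almost additive case, using the bounded variation just established, yields that $\hat h$ is bounded and Borel measurable and that the unique equilibrium state for $\H$, which is the unique invariant Gibbs measure for $\H$, is the unique equilibrium state for $\hat h$ and its unique invariant Gibbs measure; Theorem \ref{go1}\ref{v2} gives the weak Gibbs analogue in the weakly almost additive case. The replacement of (\ref{G1}) by $\int P(\sigma_X,\pi,f)\,d\mu = \int \hat h\,d\mu$ is supplied by the chain of equalities in Proposition \ref{connectgh}\ref{13}, which already identifies $\int P\,d\mu$ with $\lim_n \frac{1}{n}\int \log h_n\,d\mu = \int \hat h\,d\mu$. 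This establishes the analogues of Corollary \ref{aplrelative}\ref{almostcase} and \ref{walmostcase}.

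Finally, for the analogue of Corollary \ref{basic0}, I would feed in Lemma \ref{fact1}: if $\mu$ is an invariant Gibbs measure for $f$, then $\pi\mu$ is the unique equilibrium state for $\G$ and its unique invariant Gibbs measure. By the comparison of the previous paragraph $\pi\mu$ is then also the unique equilibrium state for $\H$ and its unique invariant Gibbs measure, and the already-proved analogue of Corollary \ref{aplrelative} identifies $\pi\mu$ as the unique invariant Gibbs (respectively weak Gibbs) measure for $\hat h$ and the unique equilibrium state for $\hat h$. I do not expect a genuine obstacle here, since the content is entirely contained in Proposition \ref{connectgh}; the only point requiring a moment's care is checking that the uniform factor comparison in \ref{11} simultaneously transports bounded variation, the identification of the limits $\lim_n \frac{1}{n}\int \log f_n\,d\mu$, and the (weak) Gibbs property from $\G$ to $\H$, after which everything is a verbatim repetition of the earlier arguments.
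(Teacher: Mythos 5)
Your proposal is correct and takes essentially the same route as the paper: the paper's own proof is the one-line observation that one applies Theorem \ref{go1} to $\H$ (with $\hat h$ playing the role of $\hat f$), using Proposition \ref{connectgh} to ensure that $\G$ and $\H$ share the same equilibrium states and (weak) Gibbs measures, together with Lemma \ref{fact1} for the part concerning Corollary \ref{basic0}. Your write-up simply fills in the transfer details (bounded variation, weak specification of $Y$, equality of the integral limits) that the paper leaves implicit.
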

\begin{proof}
We apply Theorem \ref{go1} for $\H$ and $h$, noting that $\G$ and $\H$ have the same equilibrium state with 
the Gibbs property by Proposition \ref{connectgh}.
\end{proof}
We study $\H$ and find $\hat h$ explicitly  to prove Theorem \ref{phd1}. 
From now on, assume that $f$ is function of two coordinates on $X$. Then for each $n\geq 2, y\in Y$ we have 
\begin{equation} \label{useful22}
h_n(y)=\sum_{x_1\dots x_n\in B_n(X), \pi(x_1\dots x_n)=y_1\dots y_n}e^{f[x_1x_2]+f[x_2x_3]+\cdots +f[x_{n-1}x_n]}
\end{equation}
Since $h_n$ is a function of $n$ coordinates, define $h[b_1\dots b_n]:=h_n(y)$ for $y\in [b_1\dots b_n]$.
\begin{lemma} \label{thesisE}
 Let $X\subseteq \{1,\dots, l\}^{\N}, l>2$ and $Y\subseteq\{1,2\}^{\N}$ be irreducible subshifts.  Let $\pi: X\rightarrow Y$ be a one-block factor map such that $\pi(1)=1, 
\pi(i)=2$, for  $i=2,\dots, l$.  Let  $f$ be a  function on $X$ of two coordinates. 
Suppose that 
$$\lim_{n\to\infty} \frac{h[2^n]}{h[2^{n-1}]}\text{and} \lim_{n\to\infty} \frac{h[12^n]}{h[2^{n}]} \text{exist.} $$ 
 Then $\hat{h}$ defined in Proposition \ref{connectgh} \ref{13} is given by
  \begin{equation} \label{specifich}
  \hat{h}(y)=
\begin{cases}
\log h[21]   & \mbox{  $ y\in [21], $}\\
\log \big (\frac{h[2^n1]}{h[2^{n-1}1]\vert} \big)
   & \mbox{  $ y\in [2^n1], n\geq 1 $}\\
 \lim_{n\rightarrow\infty} \log \big(\frac{h[2^n]}{h[2^{n-1}]}\big) & \mbox{  $ y=2^{\infty} $}\\
  {f[11]}   & \mbox{  $ y\in [1^n2], n\geq 2,\text{ or }  y=1^{\infty} $}\\
  \log \big(\frac{h[12^n1]}{h[2^{n}1]} \big)
   & \mbox{  $ y\in [12^n1], n\geq 1 $}\\
 \lim_{n\rightarrow\infty} \log \big(\frac{h[12^n]}{h[2^{n}]} \big)& \mbox{  $y=12^{\infty} $}.
 \\
\end{cases}
 \end{equation} 
 If $h$ is continuous at $2^{\infty}$ and $21^{\infty}$, then $h$ is continuous on $Y$. 
\end{lemma}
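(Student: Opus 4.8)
The plan is to prove Lemma~\ref{thesisE}, whose final sentence asserts that if $\hat h$ is continuous at the two exceptional points $2^\infty$ and $21^\infty$ then it is continuous on all of $Y$. The formula~(\ref{specifich}) gives $\hat h$ explicitly as a locally constant function on cylinders of the form $[2^n 1]$, $[12^n 1]$, $[1^n 2]$, together with two limiting values at the non-isolated points $2^\infty$ and $12^\infty$ (the latter being $\sigma_Y(2^\infty)$-type, i.e. $21^\infty$ in the irreducible case when $Y$ is the golden-mean shift, or handled directly for the full shift). First I would verify that $\hat h$ is well-defined and that the six cases in~(\ref{specifich}) genuinely partition $Y$: every point of $Y$ is either $1^\infty$, $2^\infty$, $12^\infty$, or eventually enters a block $2^n 1$ or $1^n 2$ reading from the left, so the cases are exhaustive and mutually exclusive.

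**Reducing continuity to the two exceptional points.** Let me establish the main statement. Since $Y$ is a subshift over $\{1,2\}$, a function is continuous precisely when it is continuous at every point, and continuity at a point $y$ means $\hat h$ is eventually constant on shrinking cylinders $[y_1\dots y_n]$ unless $y$ is an accumulation point of points on which $\hat h$ takes different values. For any point $y$ that is \emph{not} one of $1^\infty$, $2^\infty$, $12^\infty$, $21^\infty$, the value $\hat h(y)$ is determined by finitely many leading coordinates (the first run of $1$'s and $2$'s), so $\hat h$ is locally constant—hence continuous—at such $y$. I would then check continuity at $1^\infty$ and $12^\infty$ directly from the formula: at $1^\infty$ all nearby points lie in $[1^n 2]$ or equal $1^\infty$, and on all of these $\hat h = f[11]$, a single constant, so continuity at $1^\infty$ is automatic; at $12^\infty$ the value $\lim_n \log(h[12^n]/h[2^n])$ is exactly the limit of the values $\log(h[12^n 1]/h[2^n 1])$ assigned on the approximating cylinders $[12^n 1]$, provided this latter limit agrees, which follows from the almost-additive (or weakly almost-additive) control on $\H$ via Proposition~\ref{connectgh}\ref{13}.

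**The two remaining points.** This leaves exactly $2^\infty$ and $21^\infty$ as the only points where continuity can fail, because these are the two accumulation points where the formula switches between a convergent limit and a sequence of cylinder values that need not match it a priori. Concretely, continuity at $2^\infty$ requires $\lim_n \hat h$ along $[2^n 1]$, namely $\lim_n \log(h[2^n 1]/h[2^{n-1}1])$, to equal the assigned limiting value $\lim_n \log(h[2^n]/h[2^{n-1}])$; continuity at $21^\infty$ requires the analogous matching for the $[12^n 1]$ family against values near $21^\infty$. Thus the hypothesis that $\hat h$ is continuous at $2^\infty$ and $21^\infty$ is \emph{exactly} the condition that closes these two gaps, and once they are assumed the local-constancy argument covers every other point. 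I would conclude that $\hat h$ is continuous on $Y$.

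**Main obstacle.** The delicate step is the bookkeeping at $12^\infty$ and the precise identification of which limits the cylinder-values approach. The quantities $h[2^n 1]$, $h[12^n 1]$, $h[2^n]$ satisfy the recursive/matrix structure set up before the lemma, and I expect the hard part to be showing that the ratios such as $h[12^n 1]/h[2^n 1]$ and $h[12^n]/h[2^n]$ converge to the \emph{same} limit, so that the value assigned at $12^\infty$ is genuinely the limit of the nearby cylinder-values. This requires comparing sums over fibers differing by one coordinate and invoking the (weak) almost-additivity of $\H$ from Proposition~\ref{connectgh}; the asymptotic $\lim_n (1/n)\log C_n = 0$ ensures the multiplicative error disappears after taking logarithmic differences, but one must check it does not survive in the single-step ratio, which is where the explicit matrix computation promised in Section~\ref{nicephd} becomes essential.
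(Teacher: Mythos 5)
The heart of this lemma is the formula (\ref{specifich}) itself --- that $\lim_{n\to\infty}\log\bigl(h_n(y)/h_{n-1}(\sigma_Y y)\bigr)$ exists at \emph{every} $y\in Y$ and takes the stated values --- and your proposal never proves it: you take the formula as given and defer the substance to ``the recursive/matrix structure'' and ``the explicit matrix computation promised in Section \ref{nicephd}.'' The paper's proof rests on a single elementary mechanism that your proposal never identifies: because $\pi^{-1}\{1\}=\{1\}$ and $f$ depends on two coordinates only, the sum (\ref{useful22}) defining $h_n(y)$ factors \emph{exactly} at each occurrence of the symbol $1$ in $y_1\dots y_n$. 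Thus for $y\in[2^k1]$ and all $n\ge k+2$ one has $h_n(y)=h[2^k1]\,h_{n-k}(\sigma_Y^k y)$ and $h_{n-1}(\sigma_Y y)=h[2^{k-1}1]\,h_{n-k}(\sigma_Y^k y)$, so the ratio is \emph{constant}, equal to $h[2^k1]/h[2^{k-1}1]$, for all large $n$ --- no limiting argument, no almost additivity, and no matrices are involved. The same telescoping handles $[21]$, $[12^k1]$, $[1^k2]$ and $1^\infty$, while for $y=2^\infty$ and $y=12^\infty$ the ratio is literally $h[2^n]/h[2^{n-1}]$, resp.\ $h[12^{n-1}]/h[2^{n-1}]$, which converge precisely by the lemma's two hypotheses. (Matrices and Jordan forms enter only in the subsequent lemmas under Setting (C); the present lemma is stated for general $l>2$ and proved without them.)

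Your continuity argument is also flawed on two counts. First, it is circular in this context: the lemma does \emph{not} assume $\H$ is (weakly) almost additive --- its only hypotheses are the existence of the two limits --- so you may not invoke Proposition \ref{connectgh}\ref{13} to conclude that continuity at $12^\infty$ is automatic. It is not: the possibility that $\hat h$ is discontinuous exactly at $12^\infty$ is the reason for Lemmas \ref{conti12}, \ref{lastlemma} and \ref{exception} later in the paper. Second, you name the wrong exceptional point: $21^\infty$ can never be a discontinuity, since by the formula $\hat h\equiv\log h[21]$ on the whole cylinder $[21]$; and the cylinders $[12^n1]$, whose values you propose to match ``against values near $21^\infty$,'' accumulate at $12^\infty$, not at $21^\infty$ (the ``$21^\infty$'' in the statement is evidently a typo for $12^\infty$). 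Your local-constancy reduction is the right idea for the final sentence, but as assembled --- continuity at $12^\infty$ asserted unconditionally, a matching condition demanded at $21^\infty$ where nothing accumulates --- it does not close.
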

\begin{rem}
In general $\hat h$ is not a function of finitely many coordinates. 
\end{rem}

\begin{proof}
Noting that $\pi^{-1}\{1\}=\{1\}$ 
we study the following cases.

Case1: If $y\in [21]$,  then for $n\geq 3$, it is easy to see that 
$$\frac{h_{n}(y)}{h_{n-1}(\sigma_Y y)}=\frac{h_2(y)h_{n-1}(\sigma_Y y)}{h_{n-1}(\sigma_Y y)}=h[21]$$  

Case 2: If $y\in [2^{k}1]$, for some $k\geq 2$, then for any  $n\geq k+2$, it is easy to see that 
$$\frac{h_{n}(y)}{h_{n-1}(\sigma_Y y)}=\frac{h_{k+1}(y)h_{n-k}(\sigma^k_Y y)}{h_{k}(\sigma_Yy)h_{n-k}(\sigma^k_Y y)}=\frac{h[2^k1]}{h[2^{k-1}1]}.$$

Case 3: If $y\in [12^{k}1]$, for some $k\geq 1$, then for any  $n\geq k+3$ we have
$$\frac{h_{n}(y)}{h_{n-1}(\sigma_Y y)}=\frac{h_{k+2}(y)h_{n-k-1}(\sigma^{k+1}_Y y)}{h_{k+1}(\sigma_Yy)h_{n-k-1}(\sigma^{k+1}_Y y)}=\frac{h[12^k1]}{h[2^k1]}.$$

Case 4: If $y\in [1^{k}2]$, for some $k\geq 2$, then for any  $n\geq k+3$ we obtain 
$$\frac{h_{n}(y)}{h_{n-1}(\sigma_Y y)}=\frac{h_{k}(y)}{h_{k-1}(\sigma_Yy)}=\frac{h[1^k]}{h[1^{k-1}]}=e^{f[11]}.$$
Similarly, if $y=1^{\infty}$, for $n>2$ we obtain that  $h_{n}(y)/h_{n-1}(\sigma_Yy)=e^{f[11]}.$

Case 5: If $y=12^{\infty}$, then 
$h_{n}(y)/h_{n-1}(\sigma_Yy)=h[12^{n-1}]/h[2^{n-1}]$. 

Case 6:If $y=2^{\infty}$, then 
$h_{n}(y)/h_{n-1}(\sigma_Yy)=h[2^{n}]/h[2^{n-1}]$. 

Hence we obtain the result.
\end{proof}

Now we study $\hat h$. The weak almost additivity of $\H$ plays an important role to obtain the existence of the limits in 
(\ref{specifich}) and continuity. To this end,  we apply some ideas found in \cite{CU1, Yo} and study the product of matrices associated to $\H$. Given $y\in Y$, we identify $h_n(y)$ with the sum of entries of product of matrices described below and calculate $h_n(y)$ directly.

In the rest of the section, we assume Setting (C). Recall the definitions of the matrix $M$ with entries $M(i,j)$ and the submatirx
$M_{22}\vert_{\pi^{-1}(22)}$ from the previous section. 
 We write $M_{12} =
\begin{pmatrix}
0 & z&w \\
0 & 0&0\\
 0 & 0&0
\end{pmatrix}$, for some $z,w\geq 0, (z, w)\neq (0,0)$ and  
$M_{21} =
\begin{pmatrix}
0 & 0&0 \\
\bar {x} & 0&0\\
\bar {y} & 0&0
\end{pmatrix}$, for some $\bar{x},\bar{y} \geq 0, (\bar{x}, \bar{y})\neq (0,0)$. 
 For each $b_1\dots b_n\in B_n(Y)$, $n\geq 2$, define $M_{b_1\dots b_n}:=M_{b_1b_2}
M_{b_2b_3}\dots M_{b_{n-1}b_n}$. Then  for $y\in [b_1\dots b_n]$, we have
$$ h_n(y)=\text{sum of all entries of }
 M_{b_1\dots b_n}.$$
 Since $M_{22}\vert_{\pi^{-1}(22)}$ is a nonnegative real $2 \times 2$ matrix, it has real eigenvalues and
eigenvectors  $\subset \R^2$ exist. 
 Hence $M_{22}\vert_{\pi^{-1}(22)}$ is similar to a matrix of real Jordan form $J$ and there exists an invertible matrix   
$P=\begin{pmatrix}
a & b \\
c & d
\end{pmatrix}$, $a, b, c, d\in \R$, $ad-bc\neq 0$  such that $J=P^{-1}M_{22}\vert_{\pi^{-1}(22)}P$. If a nonnegative matrix $M_{22}\vert_{\pi^{-1}(22)}$ has two distinct eigenvalues $\alpha, \beta$, then $\vert \alpha\vert > \vert \beta\vert$
implies that $\alpha>0$.
If $M_{22}\vert_{\pi^{-1}(22)}$ is similar to $J$, we write  $M_{22}\vert_{\pi^{-1}(22)} \thicksim J$. 
In the following lemmas, we continue to use the notation.  
We make straight forward arguments to obtain the lemmas, using the weak almost additivity of $\H$ and studying 
eigenvalues of real Jordan canonical forms. 

\begin{lemma}\label{2inf}
 \begin{enumerate} 
\item \label{J1} If $M_{22}\vert_{\pi^{-1}(22)} \thicksim
\begin{pmatrix}
\alpha & 1 \\
0 & \alpha 
\end{pmatrix}$, $\alpha\in \R$, 
then $\alpha>0$ and \\
$\lim_{n\to \infty} \log (h[2^{n+1}]/[2^n]) =\log \alpha$.
\item \label{dia1}
If $M_{22}\vert_{\pi^{-1}(22)} \thicksim
\begin{pmatrix}
\alpha & 0 \\
0 & \beta 
\end{pmatrix}$, where $\beta\neq -\alpha$,  
then $(\alpha, \beta)\neq (0,0)$ and \\
$\lim_{n\to \infty} \log (h[2^{n+1}]/[2^n])$ exists.
\end{enumerate} 
\end{lemma}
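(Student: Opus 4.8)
The plan is to reduce everything to the asymptotics of powers of the single $2\times 2$ matrix $N:=M_{22}\vert_{\pi^{-1}(22)}$. Writing $\mathbf 1=(1,1)^{T}$, the identity ``$h_n(y)=$ sum of all entries of $M_{b_1\dots b_n}$'' specialized to $y=2^{\infty}\in[2^n]$ gives $h[2^n]=\mathbf 1^{T}N^{\,n-1}\mathbf 1$, because the $3\times 3$ matrix $M_{22}$ is supported on the $\{2,3\}$-block, which is exactly $N$. Since $Y$ contains $2^{\infty}$ and $\pi$ is surjective with $\pi^{-1}\{1\}=\{1\}$, the block $2^n$ has a preimage in $X$ over $\{2,3\}$, so $h[2^n]>0$ for every $n$; equivalently $N^{\,n-1}\neq 0$ for all $n$, i.e. $N$ is not nilpotent. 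First I would record this together with the Perron--Frobenius fact (the remark preceding the lemma) that the spectral radius $\rho(N)$ is itself an eigenvalue of the nonnegative matrix $N$.

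For the positivity assertions: in case \ref{J1} both eigenvalues equal $\alpha$, so $\rho(N)=|\alpha|=\alpha\ge 0$, and non-nilpotency gives $\rho(N)>0$, whence $\alpha>0$. In case \ref{dia1}, if both $\alpha$ and $\beta$ vanished then $N$, being diagonalizable, would be the zero matrix, contradicting $N\neq 0$; hence $(\alpha,\beta)\neq(0,0)$.

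Next I would compute the ratio $h[2^{n+1}]/h[2^n]$ from the real canonical form $J=P^{-1}NP$. In case \ref{J1}, $J^{\,n-1}=\begin{pmatrix}\alpha^{n-1}&(n-1)\alpha^{n-2}\\0&\alpha^{n-1}\end{pmatrix}$, so $h[2^n]=\alpha^{\,n-2}\big(c_1+(n-1)c_2\big)$ for constants $c_1,c_2$ read off from $P^{-1}\mathbf 1$ and $\mathbf 1^{T}P$; since $h[2^n]>0$ and $\alpha>0$ the bracket is positive, forcing $c_2\ge 0$, and in the ratio $\alpha\cdot\tfrac{c_1+nc_2}{c_1+(n-1)c_2}\to\alpha$, giving $\lim_n\log\big(h[2^{n+1}]/h[2^n]\big)=\log\alpha$. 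In case \ref{dia1}, $h[2^n]=c_\alpha\alpha^{\,n-1}+c_\beta\beta^{\,n-1}$. When $\alpha=\beta$ one gets $N=\alpha I$, and the ratio is identically $\alpha>0$. When $\alpha\neq\beta$, the hypothesis $\beta\neq-\alpha$ forces $|\alpha|\neq|\beta|$, say $|\alpha|>|\beta|$, and the quoted Perron--Frobenius remark gives $\alpha>0$; dividing by the dominant term $\alpha^{\,n-1}$ and using $(\beta/\alpha)^{n}\to 0$ shows the ratio converges (to $\alpha$ if $c_\alpha\neq 0$, and to $\beta$ in the degenerate subcase $c_\alpha=0$, where $h[2^n]=c_\beta\beta^{\,n-1}>0$ forces $\beta>0$). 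In every subcase the limit exists.

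The main obstacle is that the assertion concerns the multiplicative ratio $h[2^{n+1}]/h[2^n]$, which is strictly finer than the growth rate $\tfrac1n\log h[2^n]\to\log\rho(N)$ already furnished by the weak almost additivity of $\H$: the ratio can fail to converge precisely when the eigenvalues are $\pm\alpha$, and the whole purpose of the hypotheses (a single Jordan block, or diagonalizable with $\beta\neq-\alpha$) is to exclude this oscillatory antidiagonal case. Thus the delicate work is controlling the subleading contributions --- the polynomial factor $(n-1)$ in case \ref{J1} and a possibly vanishing leading coefficient $c_\alpha$ in case \ref{dia1} --- and invoking $h[2^n]>0$ to rule out the sign patterns that would otherwise break convergence.
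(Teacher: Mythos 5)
Your proposal is correct and follows essentially the same route as the paper: reduce $h[2^n]$ to the sum of entries of powers of $M_{22}\vert_{\pi^{-1}(22)}$ via the real Jordan form, expand $h[2^n]$ as a combination of $\alpha^{n}$, $n\alpha^{n-1}$, $\beta^{n}$, and run a case analysis on vanishing coefficients, using $h[2^n]>0$ to exclude the bad sign patterns (your coefficients $c_1, c_2, c_\alpha, c_\beta$ are exactly the paper's explicit expressions $2(ad-bc)$, $a^2-c^2$, $(a+c)(d-b)$, $(a-c)(d+b)$). The only cosmetic difference is that you obtain $\alpha>0$ in the Jordan-block case up front from Perron--Frobenius plus non-nilpotency, whereas the paper reads positivity off from the limit of the positive ratios; both are valid.
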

\begin{proof}
We first show (\ref{J1}). Let $n\geq 4$. 
Since $(M_{22}\vert_{\pi^{-1}(22)})^n=P\begin{pmatrix}
\alpha^n & n\alpha^{n-1} \\
0 & \alpha^n 
\end{pmatrix}P^{-1}$, for an invertible matrix $P$,
a straight forward calculation shows that 
\begin{eqnarray}\label{2nJ}
\begin{split}
h[2^{n+2}]&=\text{sum of all entries of the matrix $M_{22}^{n+1}$}\\
&=\frac{2(ad -bc)\alpha^{n+1}+(n+1)(a^2-c^2)\alpha^{n}}{ad-bc}>0.
\end{split}
\end{eqnarray}
Note that $\alpha\neq 0$ since $2^{n+2}\in B_{n+2}(Y)$. 

Case 1. $a^2-c^2\neq 0$. We obtain $\lim_{n\to \infty}h[2^{n+2}]/h[2^{n+1}]=\alpha$.  Hence $\alpha>0$.

Case 2. $a^2-c^2=0$. Since $ad-bc\neq 0$, we obtain $\lim_{n\to \infty}h[2^{n+2}]/h[2^{n+1}]=\alpha$. Hence $\alpha>0$.

Next we show (\ref{dia1}). Since
$(M_{22}\vert_{\pi^{-1}(22)})^n=P\begin{pmatrix}
\alpha^n & 0 \\
0 & \beta^n 
\end{pmatrix}P^{-1}$
for an invertible matrix 
$P$, 
simple calculations show that 
\begin{eqnarray}\label{2nD}
\begin{split}
h[2^{n+2}]=&\text{sum of all  entries of $M_{22}^{n+1}$}\\
&=\frac{(a+c)(d-b)\alpha^{n+1}+(a-c)(d+b)\beta^{n+1}}{ad-bc}>0.
\end{split}
\end{eqnarray}
Case 1. $\alpha$ or  $\beta$ is zero. Note that $(\alpha, \beta)\neq (0,0)$. Without loss of generality, assume $\beta=0$. Then $(a+c)(d-b)\neq 0$ and 
$\lim_{n\to \infty}h[2^{n+2}]/h[2^{n+1}]=\alpha$.  Hence $\alpha>0$.

\noindent Case 2. $\alpha\neq 0$ and $\beta\neq 0$

(1) $\vert \alpha\vert =\vert \beta\vert $. 
\begin{enumerate} [label=(\roman*)]
\item $\alpha = \beta$. Then $\lim_{n\to \infty}h[2^{n+2}]/h[2^{n+1}]=\alpha=\beta.$ Hence $\alpha=\beta>0$.



\end{enumerate}

(2) $\vert \alpha \vert \neq \vert \beta \vert $. 
The Jordan canonical form is unique up to rearranging the Jordan blocks.
Hence without loss of generality, assume that $\vert \alpha\vert>\vert \beta\vert$.
\begin{enumerate} [label=(\roman*)]
\item $(a+c)(d-b)\neq 0.$ \text{Then} $\lim_{n\to \infty}h[2^{n+2}]/h[2^{n+1}]=\alpha$.  Hence $\alpha>0$.
 \item $(a+c)(d-b)= 0.$ \text{Then} $(a-c)(d+b)\neq 0$ \text{and} $\lim_{n\to \infty}h[2^{n+2}]/h[2^{n+1}]=\beta.$ Hence $\beta>0$.
 \end{enumerate}
 \end{proof}

\begin{lemma}\label{12inf}
Suppose $\H=\{\log h_n\}_{n=1}^{\infty}$ is weakly almost additive.
\begin{enumerate} 
\item \label{J2} If $M_{22}\vert_{\pi^{-1}(22)}\thicksim
\begin{pmatrix}
\alpha & 1 \\
0 & \alpha 
\end{pmatrix}$,
then $\lim_{n\to \infty} \log (h[12^{n+1}]/[2^{n+1]})$ exists.
\item \label{dia2}
If $M_{22}\vert_{\pi^{-1}(22)} \thicksim
\begin{pmatrix}
\alpha & 0 \\
0 & \beta 
\end{pmatrix}$,  where $\beta\neq -\alpha$,
then $\lim_{n\to \infty} \log (h[12^{n+1}]/[2^{n+1}])$ exists. 
\end{enumerate} 

\end{lemma}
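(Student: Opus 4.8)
The plan is to turn both blocks into sums of entries of matrix products and then read off the limit from the real Jordan form of $N:=M_{22}\vert_{\pi^{-1}(22)}$. Since $M_{22}$ is supported on the $\{2,3\}$-block $N$ and $M_{12}$ has its only nonzero entries $z,w$ in the first row, we have $M_{12^{n+1}}=M_{12}M_{22}^{\,n}$, so summing all entries (with $\mathbf 1=(1,1)^{T}$)
\[
h[2^{n+1}]=\mathbf 1^{T}N^{n}\mathbf 1,\qquad h[12^{n+1}]=(z,w)\,N^{n}\mathbf 1 .
\]
Thus I must show $\log R_n$ converges, where $R_n=(z,w)N^{n}\mathbf 1/(\mathbf 1^{T}N^{n}\mathbf 1)$, and inserting $N=PJP^{-1}$ reduces this to the two Jordan shapes in the hypotheses.

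For the defective case \ref{J2}, Lemma~\ref{2inf}\ref{J1} gives $\alpha>0$, and $N=\alpha I+E$ with $E=\left(\begin{smallmatrix}0&N(2,3)\\N(3,2)&0\end{smallmatrix}\right)$ nilpotent and nonzero, so $N^{n}=\alpha^{n-1}(\alpha I+nE)$ and
\[
R_n=\frac{\alpha(z+w)+n\,(z,w)E\mathbf 1}{2\alpha+n\,\mathbf 1^{T}E\mathbf 1}.
\]
This is a ratio of affine functions of $n$, hence has a limit in $[0,\infty]$; evaluating the inner products (exactly one off-diagonal entry of $N$ is positive) yields $\lim R_n=z$ if $N(3,2)=0$ and $\lim R_n=w$ if $N(2,3)=0$. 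For the diagonalizable case \ref{dia2}: if $\alpha=\beta$ then $N=\alpha I$ and $R_n\equiv(z+w)/2$; otherwise $\alpha\neq\beta$, and the hypothesis $\beta\neq-\alpha$ forces $|\alpha|\neq|\beta|$, so by the Perron remark preceding the lemma the dominant eigenvalue satisfies $\alpha>0$ and $|\beta|<\alpha$. Writing $N^{n}=\alpha^{n}\Pi_\alpha+\beta^{n}\Pi_\beta$ and dividing by $\alpha^{n}$, the term $(\beta/\alpha)^{n}\to0$ gives $\lim R_n=(z,w)\Pi_\alpha\mathbf 1/(\mathbf 1^{T}\Pi_\alpha\mathbf 1)$.

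In every case the limit exists in $[0,\infty]$; the substance of the lemma is that it lies in $(0,\infty)$, so that $\log R_n$ has a finite limit. The denominator poses no problem: $h[2^{n+1}]>0$ for all $n$ and its leading coefficient is positive (by Perron when $N$ is irreducible, by inspection when $N$ is triangular). The only way the limit could collapse to $0$ or blow up to $\infty$ is a degenerate vanishing of a leading numerator coefficient, such as $z=0$ in the subcase $N(3,2)=0$, or $(z,w)\Pi_\alpha\mathbf 1=0$. I would rule these out using the irreducibility of $X$ from Setting~(C): if, say, the transition $3\to2$ is forbidden ($N(3,2)=0$) and simultaneously $1\to2$ is forbidden ($z=0$), then the symbol $2$ can only be entered from itself, contradicting irreducibility; hence $z>0$, and the symmetric argument gives $w>0$ when $N(2,3)=0$. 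When $N$ is irreducible its Perron eigenvector $v$ is strictly positive, so $(z,w)\Pi_\alpha\mathbf 1/(\mathbf 1^{T}\Pi_\alpha\mathbf 1)=(zv_1+wv_2)/(v_1+v_2)>0$.

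This case bookkeeping — defective versus diagonalizable $J$, irreducible versus triangular $N$, and the sign of $\beta$ — is the main obstacle, because it is precisely here that the limit could a priori fail to be finite and positive; the finiteness and positivity are forced by the explicit spectral formulas together with the irreducibility of $X$. The weak almost additivity of $\H$ enters mainly by placing us in the setting of Proposition~\ref{connectgh}\ref{13}, so that these limits are the values of the function $\hat h$ at stake, and by supplying the a priori subexponential control $\tfrac1n\log R_n\to0$ that is consistent with the formulas above.
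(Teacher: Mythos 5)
Your proposal is correct, and it reaches the conclusion by a genuinely different route than the paper at the decisive step. Both arguments begin identically: $h[2^{n+1}]=\mathbf 1^{T}N^{n}\mathbf 1$ and $h[12^{n+1}]=(z,w)N^{n}\mathbf 1$ for $N=M_{22}\vert_{\pi^{-1}(22)}$, followed by a spectral analysis of $N$. The divergence is in how the degenerate alternatives (ratio tending to $0$ or $\infty$) are excluded. The paper conjugates by an explicit $P=\bigl(\begin{smallmatrix}a&b\\c&d\end{smallmatrix}\bigr)$, expands $h[12^{n+2}]$ and $h[2^{n+2}]$ in terms of $a,b,c,d,z,w$, and uses the hypothesis that $\H$ is weakly almost additive to kill the bad coefficient configurations: in part (1), for instance, if $(az+cw)(a-c)\neq0$ and $a+c=0$ the ratio grows linearly in $n$, and if $(az+cw)(a-c)=0$ and $a^{2}-c^{2}\neq0$ it tends to $0$; either violates the two-sided bound $e^{-C_1}h_1(12^{\infty})h[2^{m}]\le h[12^{m}]\le e^{C_1}h_1(12^{\infty})h[2^{m}]$ obtained from weak almost additivity at $12^{\infty}$ with first block of length one. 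You never genuinely use weak almost additivity; you replace it by nonnegativity and Setting (C): a defective nonnegative $2\times2$ matrix must be $\alpha I$ plus a one-sided off-diagonal nilpotent (the discriminant $(N(2,2)-N(3,3))^{2}+4N(2,3)N(3,2)$ can vanish only if both terms vanish), Perron--Frobenius settles the irreducible diagonalizable case, and irreducibility of $X$ forces $z>0$ when $N(3,2)=0$ and $w>0$ when $N(2,3)=0$. This buys a slightly stronger statement---under Setting (C) the limits exist whether or not $\H$ is weakly almost additive---and it explains why some of the paper's subcases are vacuous (for a triangular defective $N$ the eigenvector is a coordinate vector, so $a+c=0$ can never occur). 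What the paper's route buys is uniformity: the same weak-almost-additivity exclusions are recycled in Lemmas \ref{conti2}, \ref{conti12} and \ref{lastlemma} for the continuity analysis at $2^{\infty}$ and $12^{\infty}$, and the argument does not lean on irreducibility of $X$.

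Two points to tighten. First, the structural fact about defective nonnegative matrices is asserted rather than proved; it needs the one-line discriminant argument above, since a general nilpotent $2\times2$ matrix need not have zero diagonal. Second, in the diagonalizable triangular case your explicit list of degeneracies ($z=0$ with $N(3,2)=0$, or $(z,w)\Pi_{\alpha}\mathbf 1=0$ with $N$ irreducible) omits the configuration where, say, $N(3,2)=0$ but the dominant eigenvalue is $N(3,3)$; there the leading numerator coefficient is $zN(2,3)/(N(3,3)-N(2,2))+w$, and you should note that your two irreducibility facts still make it positive.
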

\begin{proof}
We show (\ref{J2}). 
Using a real Jordan form of $M_{22}\vert_{\pi^{-1}(22)} $ as in the proof of Lemma \ref{2inf}, we obtain 
 \begin{eqnarray} \label{J12n}
\begin{split}
h[12^{n+2}]&=\text{sum of all entries of the matrix $M_{12}M_{22}^{n+1}$}\\
&=\frac{(ad-bc)(z+w)\alpha^{n+1}+(n+1)(az+cw)(a-c)\alpha^{n}}{ad-bc},
\end{split}
\end{eqnarray}
We use  (\ref{2nJ}) and (\ref{J12n}), 

 Case 1. $(az+cw)(a-c)\neq 0$. 
By (\ref{2nJ}), we note  that $a+c= 0$ gives a contradiction to
the weak almost additivity of $\H$. Hence $a+c\neq 0$. We obtain $\lim_{n\to \infty}h[12^{n+2}]/h[2^{n+2}]=(az+cw)/(a+c).$

Case 2. $(az+cw)(a-c)= 0$.
By (\ref{2nJ}), we note  that $a+c= 0$ by the weak almost additivity 
of $\H$. Then $z+w\neq 0$ and $\lim_{n\to \infty}h[12^{n+1}]/h[2^{n}]=(z+w)/2$.



 Next we show (\ref{dia2}).
 Using a real Jordan form of $M_{22}\vert_{\pi^{-1}(22)}$ as in the proof of Lemma \ref{2inf},
  \begin{eqnarray}\label{dia121}
\begin{split}
h[12^{n+2}]&=\text{sum of all entries of the matrix $M_{12}M_{22}^{n+1}$}\\
&=\frac{\alpha^{n+1}(d-b)(az+cw)+\beta^{n+1} (a-c)(bz+dw)}{ad-bc}.
\end{split}
\end{eqnarray} 
We apply (\ref{2nD}) and (\ref{dia121}). Note that $h[12^{n+2}]\neq 0$ and $h[2^{n+2}]\neq 0$ because $12^{n+2}$, $2^{n+2}\in B(Y)$. 

Case 1. $\beta=0$. Then $(a+c)(d-b)\neq 0$ and $(d-b)(az+cw)\neq 0$. Then
$\lim_{n\to \infty}h[12^{n+1}]/h[2^{n}]=(az+cw)/(a+c)$.

Case 2. $\alpha\neq 0$ and $\beta\neq 0$.

(1) $\alpha  =\beta $. 
By a direct computation with $ad-bc\neq 0$, $\lim_{n\to \infty}h[12^{n+1}]/h[2^{n}]=(z+w)/2$.

(2) $\vert \alpha \vert \neq  \vert \beta \vert $. Assume that $ \vert \alpha\vert >\vert  \beta \vert $.
\begin{enumerate} [label=(\roman*)]
\item $(d-b)(az+cw)\neq 0.$ We note that the weak almost additivity of $
\H$ implies that $(d-b)(az+cw)\neq 0 \iff (a+c)(d-b)\neq 0.$
Hence $\lim_{n\to \infty}h[12^{n+1}]/h[2^{n}]=(az+cw)/(a+c)$.
 \item $(d-b)(az+cw)=0.$  Then $(d-b)(az+cw)=0 \iff (a+c)(d-b)=0.$ Thus $\lim_{n\to \infty}h[12^{n+1}]/h[2^{n}]=(bz+dw)/(b+d).$
 \end{enumerate} 
 \end{proof}
 
 
 \begin{lemma}\label{conti2}
 Suppose $\H=\{\log h_n\}_{n=1}^{\infty}$ is weakly almost additive. 
If $M_{22}\vert_{\pi^{-1}(22)} \nsim
\begin{pmatrix}
\alpha & 0 \\
0 & -\alpha
\end{pmatrix}$,  $\alpha\neq 0$, 
 then $\hat h$ is continuous at $2^{\infty}\in Y$.
 \end{lemma}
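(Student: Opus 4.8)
The plan is to reduce continuity at $2^{\infty}$ to the comparison of two explicit limits, and then to read off both limits from the dominant eigenvalue of $N:=M_{22}\vert_{\pi^{-1}(22)}$. Since $Y\subseteq\{1,2\}^{\N}$, every point $y\neq 2^{\infty}$ near $2^{\infty}$ agrees with $2^{\infty}$ on a long prefix of $2$'s and then carries its first $1$, so $y\in[2^{n}1]$ for large $n$. By the formula (\ref{specifich}) of Lemma \ref{thesisE}, $\hat h$ is constant on each cylinder $[2^{n}1]$ with value $\log\bigl(h[2^{n}1]/h[2^{n-1}1]\bigr)$, while $\hat h(2^{\infty})=\lim_{n\to\infty}\log\bigl(h[2^{n}]/h[2^{n-1}]\bigr)$. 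Hence continuity at $2^{\infty}$ amounts to showing that the limit
\[
\lim_{n\to\infty}\log\frac{h[2^{n}1]}{h[2^{n-1}1]}\quad\text{exists and equals}\quad\lim_{n\to\infty}\log\frac{h[2^{n}]}{h[2^{n-1}]}.
\]

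First I would record two matrix identities. Writing $\mathbf 1=(1,1)^{T}$ and letting $\mathbf w=(\bar x,\bar y)^{T}\geq 0$, $\mathbf w\neq 0$, be the restriction to $\{2,3\}$ of the nonzero column of $M_{21}$, the identification of $h_n(y)$ with the sum of entries of $M_{b_1\cdots b_n}$ gives, by a direct computation, $h[2^{n}]=\mathbf 1^{T}N^{\,n-1}\mathbf 1$ and $h[2^{n}1]=\mathbf 1^{T}N^{\,n-1}\mathbf w$ (here $M_{22}$ restricts to $N$ on the block $\{2,3\}$ and has vanishing first row and column). Since $N$ is nonnegative it has real eigenvalues; let $\alpha$ be its spectral radius, which is positive because $2^{n}\in B(Y)$ forces $N$ to be non‑nilpotent. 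The hypothesis $N\nsim\begin{pmatrix}\alpha&0\\0&-\alpha\end{pmatrix}$ says precisely that $-\alpha$ is not an eigenvalue of $N$ (an eigenvalue of modulus $\alpha$ of the opposite sign forces $\operatorname{tr}N=0$, i.e. the positive‑antidiagonal form); thus $\alpha$ is the \emph{only} eigenvalue of modulus $\alpha$, the second eigenvalue $\lambda$ satisfying $|\lambda|<\alpha$ or $\lambda=\alpha$. By Lemma \ref{2inf}, $h[2^{n}]/h[2^{n-1}]\to\alpha$, so $\hat h(2^{\infty})=\log\alpha$ and $\tfrac1n\log h[2^{n}]\to\log\alpha$.

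Next I would control the left‑hand limit. Applying the weak almost additivity of $\H$ to the splitting of $2^{n}1$ as $2^{n}$ followed by $1$ yields constants $C_n>0$ with $C_n/n\to0$ such that
\[
e^{-C_n}\,h[2^{n}]\,h[1]\ \le\ h[2^{n}1]\ \le\ e^{C_n}\,h[2^{n}]\,h[1],
\]
where $h[1]$ is $h_1$ evaluated at the symbol $1$. Dividing $\log h[2^{n}1]$ by $n$ and letting $n\to\infty$ gives $\tfrac1n\log h[2^{n}1]\to\log\alpha$. Now I would expand $h[2^{n}1]=\mathbf 1^{T}N^{\,n-1}\mathbf w=p(n-1)\alpha^{\,n-1}+q(n-1)\lambda^{\,n-1}$, with $p,q$ polynomials of degree at most $1$ coming from the Jordan structure of the $2\times2$ matrix $N$. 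Because the exponential growth rate is $\log\alpha$ and $\alpha$ is the unique eigenvalue of maximal modulus, $p$ cannot vanish identically; hence
\[
\frac{h[2^{n}1]}{h[2^{n-1}1]}=\alpha\cdot\frac{p(n-1)+q(n-1)(\lambda/\alpha)^{\,n-1}}{p(n-2)+q(n-2)(\lambda/\alpha)^{\,n-2}}\longrightarrow\alpha,
\]
using $|\lambda/\alpha|<1$ (or absorbing $\lambda=\alpha$ into $p$) and $p(n-1)/p(n-2)\to1$. Combining this with $\hat h(2^{\infty})=\log\alpha$ gives the displayed equality and hence continuity of $\hat h$ at $2^{\infty}$.

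The main obstacle is exactly the step $p\not\equiv0$, i.e. that the dominant eigenvalue's contribution to $h[2^{n}1]$ does not cancel: a priori one could have $h[2^{n}1]\sim q(n)\lambda^{\,n}$ with $0<\lambda<\alpha$, sending the left‑hand ratio to $\lambda\neq\alpha$ and breaking continuity. It is precisely the weak almost additivity of $\H$ (which pins the growth rate to $\log\alpha$) together with the exclusion of the eigenvalue $-\alpha$ (which makes $\alpha$ the unique eigenvalue of maximal modulus) that rules this out. The remaining ingredients—the two matrix identities and the elementary fact $p(n-1)/p(n-2)\to1$ for a nonzero polynomial—are routine.
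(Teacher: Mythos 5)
Your proof is correct, and it sets up the same reduction as the paper — via the formula (\ref{specifich}) of Lemma \ref{thesisE}, continuity at $2^{\infty}$ becomes the equality of the two ratio limits, which is exactly the paper's (\ref{hconat2}) — but from there it takes a genuinely different route. The paper never identifies the common limit: it fixes a Jordan basis with entries $a,b,c,d$, computes $h[2^{n+2}]$ and $h[2^{n+2}1]$ explicitly (see (\ref{2nD}), (\ref{J21n}), (\ref{dia21n})), and runs an exhaustive case analysis on which coefficient products vanish, invoking the weak almost additivity of $\H$ in each degenerate subcase to force the vanishing pattern for $h[2^{n}1]$ to match that of $h[2^{n}]$; it even allows the possibility that both ratios converge to the subdominant eigenvalue $\beta$. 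You instead identify both limits as $\log\alpha$, where $\alpha$ is the spectral radius of $N=M_{22}\vert_{\pi^{-1}(22)}$: nonnegativity pins the growth rate of $h[2^{n}]=\mathbf 1^{T}N^{n-1}\mathbf 1$ at $\log\alpha$, weak almost additivity transfers that rate to $h[2^{n}1]=\mathbf 1^{T}N^{n-1}\mathbf w$, and the eigenvalue expansion together with the hypothesis (which, as you note, says precisely that $\alpha$ is the unique eigenvalue of maximal modulus) forces the dominant coefficient of $h[2^{n}1]$ to be nonzero. This buys a unified treatment of the Jordan and diagonal cases, eliminates the coefficient case analysis, shows as a by-product that the paper's degenerate subcases (both limits equal to $\beta$) are in fact vacuous, and yields the extra information $\hat h(2^{\infty})=\log\alpha$. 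One point to tighten: Lemma \ref{2inf}, as stated, only asserts in the diagonal case that $\lim_{n}h[2^{n+1}]/h[2^{n}]$ \emph{exists}, not that it equals the spectral radius, so your opening step needs the standard supplementary fact that for a nonnegative non-nilpotent matrix the entry sum satisfies $\mathbf 1^{T}N^{n}\mathbf 1\geq \alpha^{n}$ (take a nonnegative eigenvector $v$ with $Nv=\alpha v$ and compare $\mathbf 1$ with $v/\lVert v\rVert_{\infty}$), hence grows at rate exactly $\log\alpha$; combined with the existence of the ratio limit from Lemma \ref{2inf}, this identifies that limit as $\alpha$. With that one-line supplement — which is the same mechanism you already run for $h[2^{n}1]$ — your argument is complete.
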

\begin{proof}
We apply the proof of Lemma \ref{2inf}. By the definition of $\hat h$, 
it is enough to show that 
\begin{equation}\label{hconat2}
\lim_{n\to \infty} \log \frac{h[2^{n+2}]}{h[2^{n+1}]}=
\lim_{n\to \infty} \log \frac{h[2^{n+2}1]}{h[2^{n+1}1]}.
\end{equation}
Suppose
$M_{22}\vert_{\pi^{-1}(22)} \thicksim
\begin{pmatrix}
\alpha & 1 \\
0 & \alpha 
\end{pmatrix}$.
Then we obtain that
 \begin{eqnarray} \label{J21n}
\begin{split}
h[2^{n+2}1]&=\text{sum of all  entries of the matrix $M_{21}M_{22}^{n+1}$}\\
&=\frac{(ad -bc)(\bar{x}+\bar{y})\alpha^{n+1}+(n+1)(a+c)(-c\bar{x}+a\bar{y})\alpha^{n}}{ad-bc}
\end{split}
\end{eqnarray}

 Case 1. $(a+c)(-c\bar{x}+a\bar{y})\neq 0$. Then $\lim_{n\to \infty}h[2^{n+2}1]/h[2^{n+1}1]=\alpha$.

 
Case 2. $(a+c)(-c\bar{x}+a\bar{y})= 0$.  Then $(ad -bc)(\bar{x}+\bar{y})\neq 0$ and we obtain that  $\lim_{n\to \infty}h[2^{n+2}1]/h[2^{n+1}1]=\alpha$.

Next we next show (\ref{hconat2}) when $ M_{22}\vert_{\pi^{-1}(22)}\thicksim
\begin{pmatrix}
\alpha & 0 \\
0 & \beta 
\end{pmatrix}$. By simple calculations  \begin{equation} \label{dia21n}
h[2^{n+2}1]=\frac{(d\bar{x}-b\bar{y})(a+c)\alpha^{n+1}+(a\bar{y}-c\bar{x})(b+d)\beta^{n+1}}
{ad-bc}.
\end{equation}
Case 1. $\beta=0$. Then $\alpha\neq 0$. Since $h[2^{n+2}1]\neq 0$ for any $n\in\N$, we obtain
$\lim_{n\to \infty}h[2^{n+1}1]/h[2^{n}1]=\alpha$. Hence (\ref{hconat2}) holds. 

Case 2. $\alpha\neq 0$ and $\beta\neq 0$.

 (1) $\alpha  =\beta $. 
Since $h[2^{n+2}1]\neq 0$ for any $n\in\N$,  
$\lim_{n\to \infty}h[2^{n+1}1]/h[2^{n}1]=\alpha=\beta$.
Hence (\ref{hconat2}) holds.

(2) $\vert \alpha \vert \neq \vert \beta \vert $. Assume that $\vert \alpha\vert >  \vert \beta\vert$. 
\begin{enumerate} [label=(\roman*)]
\item  $(d\bar{x}-b\bar{y})(a+c)= 0.$ \text{Then} 
$(a\bar{y}-c\bar{x})(b+d)\neq 0$ and 
$ \lim_{n\to \infty}h[2^{n+1}1]/h[2^{n}1]=\beta.$ We claim that $(a+c)(d-b)=0$.
Suppose not. 
Then by (\ref{2nD}) and 
(\ref{dia21n}), we obtain $ \lim_{n\to \infty}h[2^{n+1}1]/h[2^{n}]=0$, which is a contradiction to the weak almost additivity of $\H$. 
By the proof of Lemma \ref{2inf}  (\ref{dia1}) Case 2 (2), (\ref{hconat2}) holds.
\item $(d\bar{x}-b\bar{y})(a+c)\neq 0.$ \text{Then} 
$\lim_{n\to \infty}h[2^{n+1}1]/h[2^{n}1]=\alpha$. 
Suppose that $(a+c)(d-b)= 0$. Then  by (\ref{2nD}) and 
(\ref{dia21n}), we obtain $\lim_{n\to \infty}h[2^{n+1}1]/h[2^{n}]=\infty$, which is a contradiction to the weak almost additivity of $\H$.  Hence  $(a+c)(d-b)\neq 0$ and the proof of Lemma \ref{2inf}  (\ref{dia1}) Case 2 (2) implies (\ref{hconat2}). 
\end{enumerate}
\end{proof}

\begin{lemma}\label{conti12}
Suppose $\H=\{\log h_n\}_{n=1}^{\infty}$ is weakly almost additive.
\begin{enumerate} 
\item \label{J4} If $M_{22}\vert_{\pi^{-1}(22)} \thicksim
\begin{pmatrix}
\alpha & 1 \\
0 & \alpha 
\end{pmatrix}$, then $\alpha>0$ and the following hold.
\begin{enumerate}
\item
If $(wc+az)(-c\bar x+a\bar y)\neq 0$, then $\hat h$ is continuous at $12^{\infty}\in Y$. 
\item If $(wc+az)(-c\bar x+a\bar y)= 0$, then $\hat h$ is continuous at $12^{\infty}\in Y$ if and only if $(z+w)/2=(\bar x w+\bar y w)/(\bar x+\bar y)$.
\end{enumerate}

\item \label{dia4}
If $M_{22}\vert_{\pi^{-1}(22)} \thicksim
\begin{pmatrix}
\alpha & 0 \\
0 & \beta 
\end{pmatrix}$, then $(\alpha, \beta)\neq (0,0)$ and  the following hold.
\begin{enumerate}
\item  If $ \alpha \neq \beta$, where $\beta\neq -\alpha$, then $\hat h$ is continuous at $12^{\infty}\in Y$.
\item If $ \alpha  = \beta  $, then $\hat h$ is continuous at $12^{\infty}\in Y$ if and only if
$(\bar{x}-\bar{y})(w-z)=0$. 
\end{enumerate}
\end{enumerate}
\end{lemma}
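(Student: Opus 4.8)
The plan is to reduce continuity of $\hat h$ at $12^{\infty}$ to a single limit identity and then evaluate both sides through the real Jordan form of $M_{22}\vert_{\pi^{-1}(22)}$. Since $\pi^{-1}\{1\}=\{1\}$, the only points of $Y$ tending to $12^{\infty}$ other than $12^{\infty}$ itself lie in the cylinders $[12^{m}1]$, on each of which, by Lemma \ref{thesisE}, $\hat h$ takes the constant value $\log(h[12^{m}1]/h[2^{m}1])$; also $\hat h(12^{\infty})=\lim_{m\to\infty}\log(h[12^{m}]/h[2^{m}])$, which exists by Lemma \ref{12inf}. Hence continuity at $12^{\infty}$ is equivalent to
\[
\lim_{m\to\infty}\frac{h[12^{m}1]}{h[2^{m}1]}=\lim_{m\to\infty}\frac{h[12^{m}]}{h[2^{m}]},
\]
and the whole proof comes down to computing and comparing these two ratios.

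I would then write the four relevant blocks as entry-sums of matrix products reduced to the $\{2,3\}$ block, as in the earlier lemmas. The blocks $h[2^{n+2}]$, $h[12^{n+2}]$, $h[2^{n+2}1]$ are already given by (\ref{2nJ}), (\ref{J12n}), (\ref{J21n}) in the Jordan case and by (\ref{2nD}), (\ref{dia121}), (\ref{dia21n}) in the diagonalizable case, so the single new computation is
\[
h[12^{n+2}1]=(z,w)\,\bigl(M_{22}\vert_{\pi^{-1}(22)}\bigr)^{n+1}(\bar x,\bar y)^{T},
\]
which the same real-Jordan-form calculation evaluates, in the Jordan case, to $\bigl[(ad-bc)(z\bar x+w\bar y)\alpha^{n+1}+(az+cw)(a\bar y-c\bar x)(n+1)\alpha^{n}\bigr]/(ad-bc)$, with the analogous two-eigenvalue expression in the diagonalizable case. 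Recall that $\alpha>0$, respectively $(\alpha,\beta)\neq(0,0)$, is already supplied by Lemma \ref{2inf}.

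For the non-degenerate parts \ref{J4}(a) and \ref{dia4}(a) the argument is purely asymptotic. In the Jordan case the hypothesis $(az+cw)(a\bar y-c\bar x)\neq0$ gives $az+cw\neq0$ and $a\bar y-c\bar x\neq0$; weak almost additivity, applied by peeling the leading symbol $1$ so that $h[12^{m}]/h[2^{m}]$ and $h[12^{m}1]/h[2^{m}1]$ are bounded above and below away from $0$, rules out $a+c=0$ exactly as in Lemma \ref{12inf}. A short asymptotic analysis of the four formulas then shows that both ratios converge to the common value $(az+cw)/(a+c)$, so $\hat h$ is continuous at $12^{\infty}$. The diagonalizable case \ref{dia4}(a), with $\vert\alpha\vert\neq\vert\beta\vert$ after reordering the Jordan blocks, is handled identically with $\alpha^{n+1}$ replacing $(n+1)\alpha^{n}$: both ratios collapse to $(az+cw)/(a+c)$, any vanishing of the leading coefficients being excluded by weak almost additivity as in Lemma \ref{conti2}.

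The delicate part, and where I expect the main obstacle, is \ref{J4}(b). When $(az+cw)(a\bar y-c\bar x)=0$ the leading $(n+1)\alpha^{n}$ term disappears from some of the four blocks, so the two limits are no longer forced to share a coefficient and must be read off from the $\alpha^{n+1}$ terms. I would split according to which factor vanishes; in each branch weak almost additivity again pins down $a\pm c$ (typically forcing $a+c=0$, whence $\hat h(12^{\infty})=(z+w)/2$), and equating the two resulting constant ratios yields the stated relation among $z,w,\bar x,\bar y$. By contrast the subcase $\alpha=\beta$ of \ref{dia4}(b) is immediate: a nonnegative $2\times2$ matrix with a repeated eigenvalue that is diagonalizable must equal $\alpha I$, so $h[12^{m}]/h[2^{m}]\to(z+w)/2$ and $h[12^{m}1]/h[2^{m}1]\to(z\bar x+w\bar y)/(\bar x+\bar y)$, and these coincide precisely when $(\bar x-\bar y)(w-z)=0$. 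Tracking which term survives in each branch of \ref{J4}(b) and verifying that the algebra collapses to the asserted equality is the step demanding the most care.
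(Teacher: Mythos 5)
Your proposal is correct and is essentially the paper's own proof: the same reduction of continuity at $12^{\infty}$ to the identity $\lim_{n} h[12^{n}1]/h[2^{n}1]=\lim_{n} h[12^{n}]/h[2^{n}]$ via Lemma \ref{thesisE}, the same entry-sum computation of $h[12^{n+2}1]$ from $M_{12}\bigl(M_{22}\vert_{\pi^{-1}(22)}\bigr)^{n+1}M_{21}$ through the real Jordan form (your bilinear formula matches the paper's (\ref{12nJ}) and (\ref{12ndia})), and the same case analysis in which weak almost additivity of $\H$ controls which coefficients may vanish. One caution as you flesh out \ref{dia4}(a): weak almost additivity does not outright exclude the vanishing of $(az+cw)(d\bar{x}-b\bar{y})$; rather, as in the paper, it forces the matching coefficients in $h[2^{n+2}1]$ and $h[2^{n+2}]$ to vanish as well, so in that subcase both ratios converge to $(bz+dw)/(b+d)$ instead of $(az+cw)/(a+c)$, with continuity holding all the same.
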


\begin{proof}
We apply the proof of Lemma \ref{12inf}. It is enough to show that 
\begin{equation}\label{hcon12}
\lim_{n\to \infty} \log \frac{h[12^{n+2}1]}{h[2^{n+2}1]}=
\lim_{n\to \infty} \log \frac{h[12^{n+2}]}{h[2^{n+2}]}.
\end{equation}

We show (\ref{J4}). Some calculations show that 
\begin{eqnarray}\label{12nJ}
\begin{split}
h[12^{n+2}1]&=\text{sum of all entries of the matrix $M_{12}M_{22}^{n+1}M_{21}$}\\
&=\frac{(ad-bc)(\bar x z+\bar y w)
\alpha^{n+1}+(n+1)\alpha^{n}(wc+az)(-c\bar x+a \bar y)}{ad-bc}
\end{split}
\end{eqnarray}
We study the following cases by using (\ref{2nJ}) and (\ref{J21n}). 

Case 1. $(wc+az)(-c\bar x+a\bar y)\neq 0$. Suppose that 
$a+c=0.$ Then ${h[12^{n+2}1]}/{h[2^{n+2}1]}$ is not bounded and it is a contradiction to the weak almost additivity of $\H$. Hence $a+c\neq 0$ and  
$ \lim_{n\to \infty}h[12^{n+2}1]/h[2^{n+2}1]=(wc+az)/(a+c).$ In (\ref{2nJ}), if $a-c=0$, then 
$\lim_{n\to \infty} h[12^{n+2}1]/h[2^{n+2}]=\infty$. Hence $a-c\neq 0$.
By the proof of Lemma \ref{12inf} (\ref{J2}) Case 1, $\hat h$ is continuous at $12^{\infty}$. 

Case 2. $(wc+az)(-c\bar x+a \bar y)= 0$. Then $(ad-bc)(\bar x z+\bar y w)\neq 0$. 
\begin{enumerate}
\item $wc+az=0$ and  $-c\bar x+a \bar y\neq 0$. Then 
(\ref{J21n}) and the almost additivity of $\H$  imply $a+c=0$.
$\lim_{n\to \infty} h[12^{n+2}1]/h[2^{n+2}1]=(\bar x z+\bar y w)/(\bar x+\bar y)$. 
\item $wc+az=0$ and  $-c\bar x+a \bar y= 0$.
Then (\ref{J21n}) implies that $\bar x+\bar y\neq 0$ and $\lim_{n\to \infty} h[12^{n+2}1]/h[2^{n+2}1]=(\bar x z+\bar y w)/(\bar x+\bar y)$ 
\item $wc+az\neq 0$ and  $-c\bar x+a \bar y= 0$.
Then (\ref{J12n}) implies that $a-c=0$. Then $\lim_{n\to \infty}h[12^{n+2}1]/h[2^{n+2}1]=(\bar x z+\bar y w)/(\bar x+\bar y)$. 
\end{enumerate}
Hence in this case, by the result of Case 2 of the proof of Lemma   \ref{12inf} (\ref{J2})  $\hat h$ is continuous  at $12^{\infty}$ if and only if $(z+w)/2=(\bar x z+\bar y w)/(\bar x+\bar y).$

 Next we show (\ref{dia4}). 
 We have 
\begin{eqnarray}\label{12ndia}
\begin{split}
h[12^{n+2}1]&=\text{sum of all entries of the matrix $M_{12}M_{22}^{n+1}M_{21}$}\\
&=\frac{(az+cw)(d\bar{x}-b\bar{y})\alpha^{n+1} +(a\bar{y}-c\bar{x})(bz+dw)\beta^{n+1}}{ad-bc}
 \end{split}
 \end{eqnarray}
 Case 1. $\beta=0$. Then $\alpha\neq 0$ and,  by (\ref{dia21n}), $(a+c)(d\bar{x}-b\bar{y})\neq 0$.
 Then we have 
$\lim_{n\to \infty}h[12^{n+2}1]/h[2^{n+2}1]=(az+cw)/(a+c)$ and (\ref{hcon12}) holds.

Case 2. $\alpha\neq 0$ and $\beta\neq 0$.

(1) $\alpha = \beta $. 
A simple computation shows that 
$\lim_{n\to \infty}h[12^{n+1}1]/h[2^{n+1}1]=(\bar{x}z+\bar{y}w)/(\bar{x}+\bar{y})$. Hence 
$h$ is continuous at $12^{\infty}$ if and only if $(z+w)/2=(\bar{x}z+\bar{y}w)/(\bar{x}+\bar{y})$, that is, $(\bar{x}-\bar{y})(w-z)=0$. 

(2) $\vert \alpha\vert   \neq \vert \beta \vert$. Assume that $\vert \alpha \vert > \vert \beta\vert $. We use the weak almost additivity of $\H$ as in the proof of Lemma \ref{conti2}. We have 
\begin{equation}
\frac{h[12^{n+2}1]}{h[2^{n+2}1]}=\frac{(az+cw)(d\bar{x}-b\bar{y})\alpha^{n+1} +(a\bar{y}-c\bar{x})(bz+dw)\beta^{n+1}}{
(d\bar{x}-b\bar{y})(a+c)\alpha^{n+1}+(a\bar{y}-c\bar{x})(b+d)\beta^{n+1}}
 \end{equation}

\begin{enumerate} [label=(\roman*)]
\item $(az+cw)(d\bar{x}-b\bar{y})=0.$ \text{Then} $(a\bar{y}-c\bar{x})(bz+dw)\neq 0.$ By the weak almost additivity of $\H$ we have $(d\bar{x}-b\bar{y})(a+c)=0$ and $h[12^{n+2}1]/h[2^{n+2}1]=(bz+dw)/(b+d)$. 
Suppose $(a+c)(d-b)\neq0$. Then 
by (\ref{2nD}) $\lim_{n\to \infty}h[12^{n+2}1]/h[2^{n+2}]$\\$=0$, which is a contradiction to the   weak almost additivity of $\H$. 
By the proof of Lemma \ref{12inf} (\ref{dia2}), 
 $h$ is continuous at $12^{\infty}$.

\item $(az+cw)(d\bar{x}-b\bar{y})\neq 0.$ \text{Then} by the weak almost additivity of $\H$,
$(d\bar{x}-b\bar{y})(a+c)\neq0$ and $\lim_{n\to \infty}h[12^{n+2}1]/h[2^{n+2}1]=(az+cw)/(a+c)$.  
Suppose $(a+c)(d-b)=0$. Then by (\ref{2nD}) $\lim_{n\to \infty}h[12^{n+1}1]/h[2^{n+1}]=\infty$, which is a contradiction. Hence $(a+c)(d-b)\neq0$. Now we apply the proof of Lemma \ref{12inf} (\ref{dia2}).  
\end{enumerate}
\end{proof}

\begin{lemma}\label{lastlemma}
Suppose $\H=\{\log h_n\}_{n=1}^{\infty}$ is weakly almost additive and the matrix $M$ has one of the following 
forms. 
\begin{enumerate}[label=(\roman*)]
\item   \label{lastp2} $M_{22}\vert_{\pi^{-1}(22)} \thicksim
\begin{pmatrix}
\alpha & 1 \\
0 & \alpha 
\end{pmatrix}$ and 
$(wc+az)(-c\bar x+a\bar y)= 0$.

\item \label{lastp1}
$M\vert_{\pi^{-1}(22)} =
\begin{pmatrix}
\alpha & 0 \\
0 & \alpha 
\end{pmatrix}$.
\end{enumerate}
 Then define for $y\in Y$
\begin{equation} \label{specifichR}
   \hat{h}_1(y)=
\begin{cases}
\log \big(\frac{\bar{x}z+\bar{y}w}{\bar{x}+\bar{y}}\big)     & \mbox{  $ y=12^{\infty}, $}\\
\hat {h}(y)    & \mbox{  \text{ otherwise .}}
 \end{cases}
 \end{equation}
Then $\hat{h}_1\in C(Y)$. A measure $\mu$ is an invariant Gibbs 
(resp. weak Gibbs) measure 
for $\hat h_1$ if and only if $\mu$ is an invariant Gibbs 
(resp. weak Gibbs) measure for $\hat h$. 
\end {lemma}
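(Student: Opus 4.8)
The plan is to prove the two assertions of the lemma separately: first that $\hat h_1\in C(Y)$, and then that the Gibbs (resp. weak Gibbs) property transfers between $\hat h$ and $\hat h_1$. The guiding observation is that $\hat h_1$ is obtained from $\hat h$ by altering a single value, at $12^{\infty}$, and that this point sits at the end of the $\sigma_Y$-orbit structure (its forward orbit is eventually the fixed point $2^{\infty}$), so the modification is harmless for both continuity and the thermodynamic comparison.

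First I would locate the discontinuities of $\hat h$. From the explicit formula \eqref{specifich}, $\hat h$ is locally constant (hence continuous) at every point whose itinerary is not one of the infinite-prefix points $2^{\infty}$, $12^{\infty}$, $1^{\infty}$; and near $1^{\infty}$ (together with the cylinders $[1^n2]$) $\hat h$ takes the constant value $f[11]$, so continuity there is immediate. In both hypotheses \ref{lastp2} and \ref{lastp1} the matrix $M_{22}\vert_{\pi^{-1}(22)}$ is either a genuine Jordan block or a scalar matrix, hence not of the excluded form $\begin{pmatrix}\alpha & 0\\0&-\alpha\end{pmatrix}$; thus Lemma \ref{conti2} applies and $\hat h$ is continuous at $2^{\infty}$. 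Therefore the only possible discontinuity of $\hat h$ is at $12^{\infty}$.

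Next I would verify continuity of $\hat h_1$ at $12^{\infty}$. The points accumulating at $12^{\infty}$ are exactly those lying in the cylinders $[12^n1]$ for large $n$, on which $\hat h$ is constant and equal to $\log\big(h[12^n1]/h[2^n1]\big)$. By the computations already carried out in Lemma \ref{conti12} (case \ref{J4}, Case 2, under hypothesis \ref{lastp2}, and case \ref{dia4}, Case 2(1), under hypothesis \ref{lastp1}), these values converge to $\log\big((\bar{x}z+\bar{y}w)/(\bar{x}+\bar{y})\big)$, which is precisely the value assigned to $\hat h_1(12^{\infty})$ in \eqref{specifichR}. Hence $\lim_{y\to 12^{\infty}}\hat h_1(y)=\hat h_1(12^{\infty})$. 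Since redefining $\hat h$ at the single point $12^{\infty}$—which lies at distance $1$ from $2^{\infty}$ and is isolated from the other accumulation points—does not affect continuity anywhere else, the previous paragraph gives $\hat h_1\in C(Y)$.

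Finally, for the Gibbs equivalence, I would note that $\hat h_1-\hat h$ vanishes off $12^{\infty}$ and equals the finite constant $D_0:=\log\big((\bar{x}z+\bar{y}w)/(\bar{x}+\bar{y})\big)-\hat h(12^{\infty})$ there, $\hat h$ being bounded. The decisive step is that for every $y\in Y$ and every $n$ the orbit segment $y,\sigma_Y y,\dots,\sigma_Y^{n-1}y$ meets $12^{\infty}$ at most once: if $\sigma_Y^i y=12^{\infty}$ then $y_{i+1}=1$ and $y_j=2$ for all $j>i+1$, and this cannot hold simultaneously for two distinct indices. Consequently $\lvert (S_n\hat h_1)(y)-(S_n\hat h)(y)\rvert\le \lvert D_0\rvert$ uniformly in $y$ and $n$, so $e^{(S_n\hat h_1)(y)}$ and $e^{(S_n\hat h)(y)}$ differ by at most the fixed factor $e^{\pm\lvert D_0\rvert}$. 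Substituting into the (weak) Gibbs inequality \eqref{gibbsd} (with $f_n$ replaced by $e^{S_n\hat h}$, resp. $e^{S_n\hat h_1}$) then shows that $\mu$ is Gibbs (resp. weak Gibbs) for $\hat h$ with pressure $P$ and constant $C$ (resp. $C_n$) if and only if it is so for $\hat h_1$ with the same $P$ and the constant multiplied by $e^{\pm\lvert D_0\rvert}$; this preserves both the uniform bound needed for the Gibbs case and the condition $\lim_{n\to\infty}(1/n)\log C_n=0$ needed for the weak Gibbs case. I expect the main obstacle to be exactly this orbit-hitting argument, since it is what upgrades a pointwise change of the potential into a uniformly bounded change of the Birkhoff sums, to which the Gibbs property is insensitive.
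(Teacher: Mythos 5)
Your proof is correct and follows essentially the same route as the paper's: continuity of $\hat h_1$ at $12^{\infty}$ is deduced from the limits $\lim_{n\to\infty}h[12^{n+1}1]/h[2^{n+1}1]=(\bar xz+\bar yw)/(\bar x+\bar y)$ computed in Lemma \ref{conti12}, and the (weak) Gibbs equivalence from the fact that an orbit can pass through $12^{\infty}$ at most once, so the Birkhoff sums of $\hat h$ and $\hat h_1$ differ by a uniformly bounded constant. The only cosmetic difference is that you bound this constant by $\lvert D_0\rvert$ while the paper computes the exact ratio $(z+w)(\bar x+\bar y)/\bigl(2(\bar xz+\bar yw)\bigr)$ using $\hat h(12^{\infty})=(z+w)/2$; your explicit ``hits at most once'' justification is a point the paper leaves implicit.
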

\begin{proof}
Note that $\alpha>0$. By Lemmas \ref{2inf}, \ref{12inf} and  \ref{conti2}, $\hat h$ is defined on $Y$ and possibly has a discontinuity at $12^{\infty}$.  If $M\vert_{\pi^{-1}(22)}$ satisfies \ref{lastp2} or \ref{lastp1}, we obtain that 
$\lim_{n\to \infty}h[12^{n+1}1]/h[2^{n+1}1]=(\bar{x}z+\bar{y}w)/(\bar{x}+\bar{y})$ (see the proof of Lemma \ref{conti12}). 
Hence $\hat h_1$ is continuous at $12^{\infty}$. 
Let $y=(y_1, \dots, y_n\dots)\in Y$.
Suppose there is no $p\in \N$ such that $\sigma_Y^p(y)=12^{\infty}$. Then for every $n\in\N$, 
$e^{(S_n{\hat h})(y)}/e^{(S_n{\hat {h}_1})(y)}=1$. 
Suppose there exists $p\in \N$ such that $\sigma_Y^p(y)=12^{\infty}$. For $n\leq p$, we have $e^{(S_n{\hat h})(y)}/e^{(S_n{\hat h})(y)}=1$.  For $n\geq p+1$, if \ref{lastp2} or \ref{lastp1} occurs, then 
 the proof of Lemma \ref{12inf} implies 
$\hat h (12^{\infty})=(z+w)/2$. Hence 
$$\frac{e^{(S_n{\hat h})(y)}}{e^{(S_n {\hat h}_1)(y)}}
=\frac{(z+w)(\bar x+\bar y)}{2( \bar x z+\bar y w)}.$$
The results follow  by the definitions of the (weak) Gibbs measures. 
\end{proof}
We are left with one more case to study. 

\begin{lemma}\label{exception}
Suppose that $M_{22}\vert_{\pi^{-1}(22)} \thicksim
\begin{pmatrix}
\alpha & 0 \\
0 & -\alpha
\end{pmatrix}$ for some $\alpha$. Then $\alpha\neq 0$.
If $M_{23}=M_{32}$, then $\hat h$ is continuous on $Y$ except at the point $12^{\infty}$.
If, in addition $\bar x=\bar y$ or $z=w$, then $\hat h$ is continuous on $Y$.
In particular, if $f=0$, then $\hat h\in C(Y)$ and $\hat h=g$ where $g$ is defined in (\ref{defineg}). 
 \end{lemma}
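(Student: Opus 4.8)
The plan is to reduce the whole lemma to one algebraic fact: in this case $(M_{22}\vert_{\pi^{-1}(22)})^2$ is a scalar matrix. First I would note that $M_{22}\vert_{\pi^{-1}(22)}$ is nonnegative and its eigenvalues $\alpha,-\alpha$ have sum $0$, so its trace $M(2,2)+M(3,3)$ vanishes; since both diagonal entries are nonnegative, they are both $0$. Hence $M_{22}\vert_{\pi^{-1}(22)}=\begin{pmatrix}0 & M_{23}\\ M_{32} & 0\end{pmatrix}$ with $M_{23},M_{32}\ge 0$ and $\alpha^2=M_{23}M_{32}$; thus $\alpha\neq 0$ is equivalent to $M_{23},M_{32}>0$, which proves the first assertion. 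Taking $\alpha>0$, the Cayley--Hamilton relation (trace zero) gives $(M_{22}\vert_{\pi^{-1}(22)})^2=\alpha^2 I$. This identity is the engine of the proof: it evaluates every power of $M_{22}\vert_{\pi^{-1}(22)}$ in closed form, namely $\alpha^{2k}I$ and $\alpha^{2k}M_{22}\vert_{\pi^{-1}(22)}$, so every $h_n$ along the relevant orbits can be computed, split by the parity of the exponent.

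Next, recalling that $h_n(y)$ equals the sum of all entries of the matrix product $M_{b_1\cdots b_n}$ (the representation established before Lemma \ref{thesisE}), I would write $h[2^n]$, $h[2^n1]$, $h[12^n]$, $h[12^n1]$ as products of powers of $M_{22}\vert_{\pi^{-1}(22)}$ with the boundary weights $(z,w)$ coming from $M_{12}$ and $(\bar x,\bar y)$ coming from $M_{21}$. Under the hypothesis $M_{23}=M_{32}$ (so $M_{22}\vert_{\pi^{-1}(22)}=\begin{pmatrix}0&\alpha\\\alpha&0\end{pmatrix}$) the two parities collapse for the families at $2^\infty$: one gets $h[2^n]=2\alpha^{n-1}$ and $h[2^n1]=\alpha^{n-1}(\bar x+\bar y)$ for all $n$. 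Hence $\hat h(2^\infty)=\log\alpha$ and $\log(h[2^n1]/h[2^{n-1}1])\to\log\alpha$, so $\hat h$ is continuous at $2^\infty$. By Lemma \ref{thesisE} the value of $\hat h$ at every other point is determined by a finite prefix and is therefore locally constant, hence continuous there; the only point left to inspect is $12^\infty$, which establishes that $\hat h$ is continuous on $Y$ except possibly at $12^\infty$.

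For $12^\infty$ the same computation, using $(M_{22}\vert_{\pi^{-1}(22)})^2=\alpha^2 I$ and $M_{23}=M_{32}$, gives $h[12^n]=\alpha^{n-1}(z+w)$ for all $n$, so $\hat h(12^\infty)=\log\frac{z+w}{2}$, while $h[12^n1]$ oscillates with parity between $\alpha^{n-1}(z\bar x+w\bar y)$ and $\alpha^{n-1}(w\bar x+z\bar y)$. Therefore $\log(h[12^n1]/h[2^n1])$ converges --- and then necessarily to $\hat h(12^\infty)$ --- if and only if $z\bar x+w\bar y=w\bar x+z\bar y$, i.e. $(z-w)(\bar x-\bar y)=0$. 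This is exactly the extra hypothesis $z=w$ or $\bar x=\bar y$, so under it $\hat h$ is continuous at $12^\infty$ and hence on all of $Y$.

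Finally, for $f=0$ every entry of $M$ lies in $\{0,1\}$, and $\alpha\neq 0$ forces $M_{23}=M_{32}=1$, so the symmetric case holds automatically. The decisive point is that $12^n1\in B(Y)$ for every $n$ under Setting (C) (both admissible forms of $Y$ permit $12^n1$), so $h[12^n1]>0$ in both parities; with $\{0,1\}$-valued $z,w,\bar x,\bar y$ and $(z,w),(\bar x,\bar y)\neq(0,0)$, a short case check shows that $z\bar x+w\bar y>0$ and $w\bar x+z\bar y>0$ cannot both hold when $z\neq w$ and $\bar x\neq\bar y$, so $(z-w)(\bar x-\bar y)=0$. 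Thus $\hat h\in C(Y)$, and $\hat h=g$ by Proposition \ref{connectgh}\ref{14}. I expect the main obstacle to be exactly this parity oscillation forced by the eigenvalue $-\alpha$: the argument depends on recognizing that nonnegativity collapses the diagonal and yields the clean square $\alpha^2 I$, and that for $f=0$ the combinatorial admissibility of $12^n1$ replaces the symmetry condition $(z-w)(\bar x-\bar y)=0$ that a general potential need not satisfy; the attendant bookkeeping of boundary weights and the matching of the oscillating limits to the Lemma \ref{thesisE} values is the computational crux.
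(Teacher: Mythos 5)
Your proposal is correct, and its computational core coincides with the paper's: both reduce to the anti-diagonal form $\begin{pmatrix}0 & a_1\\ a_2 & 0\end{pmatrix}$ (trace zero plus nonnegativity), compute the entry sums $h[2^n]$, $h[2^n1]$, $h[12^n]$, $h[12^n1]$ explicitly by parity, and compare the limits along $[2^n1]$ and $[12^n1]$ with the values of $\hat h$ at $2^{\infty}$ and $12^{\infty}$. Your Cayley--Hamilton packaging $(M_{22}\vert_{\pi^{-1}(22)})^2=\alpha^2 I$ is a tidier way of doing the same bookkeeping that the paper carries out with general $a_1,a_2$ (the paper's general computation also shows why the symmetry $M_{23}=M_{32}$ is needed: otherwise the limit defining $\hat h(2^{\infty})$ oscillates between $2a_1a_2/(a_1+a_2)$ and $(a_1+a_2)/2$). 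The one genuine divergence is the $f=0$ endgame. The paper derives $(\bar x-\bar y)(z-w)=0$ by asserting that its failure contradicts ``the irreducibility of $X$''; you derive it from the fact that $12^n1\in B(Y)$ for \emph{every} $n$ under Setting (C), so surjectivity of $\pi$ forces both parity values $z\bar x+w\bar y$ and $z\bar y+w\bar x$ to be positive. Your version is the one that actually closes the argument: the transition matrix $\begin{pmatrix}0&1&0\\ 1&0&1\\ 0&1&0\end{pmatrix}$ with $\pi(1)=1$, $\pi(2)=\pi(3)=2$ is irreducible and has $z\neq w$, $\bar x\neq\bar y$; what fails there is not irreducibility of $X$ but surjectivity onto the prescribed $Y$ (the word $1221$ has no preimage), which is exactly the contradiction you invoke. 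So on this step your route is a genuine (and welcome) repair of the paper's justification. One small slip: the equivalence ``$\alpha\neq 0\iff M_{23},M_{32}>0$'' does not by itself prove the first assertion; you still need to observe that $22\in B(Y)$ has a preimage, so $M_{22}\vert_{\pi^{-1}(22)}\neq 0$, while a matrix similar to $\mathrm{diag}(0,0)$ is the zero matrix --- the same ``clearly'' the paper elides.
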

 \begin{rem}
 We can study  ${q} (y):=\lim_{n\to \infty} \log (h_n(y)/h_{n-2}({\sigma_Y}^2(y)))^{1/2}$, $y \neq 12^{\infty}$ 
 in Theorem \ref{go1}. 
 Then $q$ is continuous at $2^{\infty}$ but the limit does not exist at $12^{\infty}$.
 \end{rem}
\begin{proof}
Clearly $\alpha\neq 0$. Since $M_{22}\vert_{\pi^{-1}(22)}$ has two distinct eigenvalues $\alpha$ and -$\alpha$, we obtain 
 $M_{22}\vert_{\pi^{-1}(22)}=\begin{pmatrix}
0 & a_1 \\
a_2 & 0
\end{pmatrix}$ for some $a_1, a_2>0$. 
For $k\in\N$,
$h[2^{2k+1}]=2{a_1}^k{a_2}^k$ and 
$h[2^{2k+2}]={a_1}^k{a_2}^k(a_1+a_2).$
Hence $h[2^{2k+1}]/h[2^{2k}]=2a_1a_2/(a_1+a_2)$ and 
$h[2^{2k+2}]/h[2^{2k+1}]=(a_1+a_2)/2$.  Thus $\hat h$ is defined at $2^{\infty}$ if 
$a_1=a_2$. By a straight forward calculation, 
$\lim_{k\to \infty} h[2^{2k+1}1]/h[2^{2k}1]=a_1a_2(\bar x+\bar y)/(a_2 \bar x+ a_1 \bar y)$ and $\lim_{k\to \infty} h[2^{2k+2}1]/h[2^{2k+1}1]=(a_2\bar x+a_1 \bar y)/(\bar x+\bar y)$. Hence $a_1=a_2$ implies the continuity at $2^{\infty}$. Similarly, we obtain
 $\lim_{k\to \infty} h[12^{2k+1}]/h[2^{2k+1}]=(z+w)/2$ and $\lim_{k\to \infty} h[12^{2k+2}]/h[2^{2k+2}]=(za_1+wa_2)/(a_1+a_2)$ and  $\hat h$ is defined at $12^{\infty}$ if 
$a_1=a_2$ or $z=w$.  Since $\lim_{k\to \infty} h[12^{2k+1}1]/h[2^{2k+1}1]=(z\bar x+w\bar y)/(\bar x+\bar y)$ and 
$\lim_{k\to \infty} h[12^{2k}1]/h[2^{2k}1]=(\bar yza_1+\bar x wa_2)/ (a_1 \bar y+a_2 \bar x)$ (by direct computations),
 $\hat h$ is continuous at $12^{\infty}$ if  $a_2{\bar x}^2=a_3{\bar y}^2$ or $z=w$.  
 If $f=0$, then the matrix $M$=transition matrix of $X$ and $a_1=a_2=1$.  Suppose neither $\bar x=\bar y$ nor $z=w$.
 Then we have a contradiction to the irreducibility of a shift of finite type $X$. 
Hence $\hat h\in C(Y)$. If $f=0$, then $\G=\H$ and $\hat h=g$.
\end{proof}

 \begin{lemma}\label{exceptionF}
For the submatrix $M_{22}\vert_{\pi^{-1}(22)} =
\begin{pmatrix}
0 & a_1 \\
a_2 &  0
\end{pmatrix}$, $a_1, a_2>0$, 
define for $y\in Y$
\begin{equation} \label{specifichR}
  \hat{h}_2(y):=
\begin{cases}
\frac{1}{2}\log \big(a_1a_2)    & \mbox{  $ y=12^{\infty}, y=2^{\infty}$}\\
\hat {h}(y)    & \mbox{  \text{ otherwise .}}
 \end{cases}
 \end{equation}
Then $\hat{h}_2$ is 
Borel measurable on $Y$ and there exists ${C}_n>0$, $\lim_{n\to \infty} (1/n)\log {C}_n=0$ such that
\begin{equation}\label{modify}
e^{-{C}_n}\leq h_n(y)/e^{(S_n \hat{h}_2)(y)}\leq e^{{C}_n} \text { for every } y\in Y, n\in\N.
\end{equation}
The unique invariant Gibbs 
(resp. weak Gibbs) measure for $\H$ on $Y$  is the unique invariant Gibbs 
(resp. weak Gibbs) measure for $\hat{h}_2$ on $Y$. 
\end{lemma}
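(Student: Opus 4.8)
The plan is to prove the three assertions of the lemma in order: Borel measurability of $\hat h_2$, the weak-Gibbs comparison (\ref{modify}), and the transfer of (weak) Gibbs measures between $\H$ and $\hat h_2$. Throughout I read the lemma under the standing hypothesis of Lemmas \ref{12inf}--\ref{lastlemma} that $\H$ is weakly almost additive, and I record where almost additivity upgrades the conclusion from ``weak Gibbs'' to ``Gibbs''. For measurability I would argue as follows. By Lemma \ref{thesisE} the function $\hat h$ is constant on each of the cylinders $[2^n1]$, $[12^n1]$, $[1^n2]$ and is given there by explicit finite formulas, hence it is a countable combination of indicators of clopen sets and so is Borel measurable on $Y\setminus\{2^{\infty},12^{\infty}\}$. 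Since $\hat h_2$ agrees with $\hat h$ off the two closed singletons $\{2^{\infty}\}$ and $\{12^{\infty}\}$ and takes the value $\tfrac12\log(a_1a_2)$ there, and modifying a Borel function on a finite set preserves measurability, $\hat h_2$ is Borel measurable on $Y$.

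The heart of the argument is (\ref{modify}), and here I would split $Y$ according to whether the forward orbit ever reaches $2^{\infty}$. Let $D$ be the (countable) set of points ending in $2^{\infty}$; then $\sigma_Y^i y\in\{2^{\infty},12^{\infty}\}$ for some $i$ exactly when $y\in D$. First I would handle $y\notin D$: then $\sigma_Y^i y\notin\{2^{\infty},12^{\infty}\}$ for every $i$, so $\underline{\hat h}(\sigma_Y^i y)=\overline{\hat h}(\sigma_Y^i y)=\hat h_2(\sigma_Y^i y)$ for all $i$, whence $(S_n\underline{\hat h})(y)=(S_n\hat h_2)(y)=(S_n\overline{\hat h})(y)$. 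Applying Lemma \ref{prego1} with $C$ replaced by the weak-almost-additive constants $C_n$ of $\H$ then yields $e^{-C_n}\le h_n(y)/e^{(S_n\hat h_2)(y)}\le e^{C_n}$ at once.

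For $y\in D$ the orbit spends infinitely many steps at $2^{\infty}$, the naive sandwich breaks down, and the precise value $\tfrac12\log(a_1a_2)$ must be used. The explicit antidiagonal computations of Lemma \ref{exception} are decisive: since $(M_{22}\vert_{\pi^{-1}(22)})^m$ equals $(a_1a_2)^{m/2}$ up to a bounded factor, one obtains the uniform two-sided bounds $h[2^m]\asymp (a_1a_2)^{m/2}$ and $h[12^m]\asymp (a_1a_2)^{(m+1)/2}$ with constants depending only on $a_1,a_2,z,w$. For $y=2^{\infty}$ this gives $h_n(2^{\infty})/e^{(S_n\hat h_2)(2^{\infty})}=h[2^n]/(a_1a_2)^{n/2}$ inside a fixed band. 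For $y\in D\setminus\{2^{\infty}\}$ with last symbol $1$ at position $q$, I would split the word at that last $1$: since $\sigma_Y^{q-1}y=12^{\infty}$, weak almost additivity of $\H$ gives $h_n(y)\asymp_{C_{q-1}} h_{q-1}(y)\,h[12^{n-q}]$, while the definition of $\hat h_2$ gives $(S_n\hat h_2)(y)=(S_{q-1}\hat h_2)(y)+(n-q+1)\tfrac12\log(a_1a_2)$. Using the non-exceptional bound $e^{(S_{q-1}\hat h_2)(y)}\asymp_{C_{q-1}} h_{q-1}(y)$ together with $h[12^{n-q}]\asymp (a_1a_2)^{(n-q+1)/2}$, the factors $h_{q-1}(y)$ and $(a_1a_2)^{(n-q+1)/2}$ cancel and the ratio stays in a band of multiplicative width $O(C_{q-1})$. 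Since $q\le n$ and $C_j/j\to 0$ forces $\max_{j\le n}C_j=o(n)$, all of these estimates combine into a single sequence $C_n$ with $\lim_n (1/n)\log C_n=0$, yielding (\ref{modify}) for every $y\in Y$; when $\H$ is in fact almost additive the $C_n$ reduce to a fixed constant and every band above is a genuine constant, so (\ref{modify}) holds with $C_n\equiv C$.

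Finally, (\ref{modify}) is precisely hypothesis (\ref{simpimp}) of Lemma \ref{bounded} with $f_n=h_n$, $f=\hat h_2$, $A_n=C_n$. In the weakly almost additive case Lemma \ref{bounded}\ref{wp1} shows the unique invariant Gibbs measure $\pi\mu$ for $\H$ (Lemma \ref{fact1}) is an invariant weak Gibbs measure for $\hat h_2$; in the almost additive case Lemma \ref{bounded}\ref{sp2} gives the full Gibbs equivalence. For uniqueness I would note that (\ref{modify}) forces $\lim_n (1/n)\int\log h_n\,d\nu=\int \hat h_2\,d\nu$ for every $\nu\in M(Y,\sigma_Y)$, so $\H$ and $\hat h_2$ have the same pressure and the same equilibrium states; since any invariant (weak) Gibbs measure for $\hat h_2$ is an equilibrium state for $\hat h_2$, it must coincide with the unique equilibrium state $\pi\mu$ of $\H$. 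I expect the main obstacle to be the case $y\in D$: one has to certify that the single value $\tfrac12\log(a_1a_2)$ captures the exact exponential growth rate of both $h[2^m]$ and $h[12^m]$, and that the resulting bands, together with the splitting constants $C_{q-1}$, assemble into one admissible subexponential sequence $C_n$ valid simultaneously for all $y$.
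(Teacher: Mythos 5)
Your proposal is correct and follows essentially the same route as the paper's proof: the same decomposition of $Y$ into the points whose forward orbit reaches $2^{\infty}$ and the rest, the same treatment of the exceptional points via explicit powers of the antidiagonal matrix (giving $h[2^m]$ and $h[12^m]$ comparable to $(a_1a_2)^{m/2}$), the same splitting of a point $y_1\dots y_{q-1}12^{\infty}$ at its last symbol $1$ using the weak almost additivity constant together with the comparison (\ref{weakGh}) for the prefix, and the same transfer of the (weak) Gibbs property through (\ref{modify}). The only differences are cosmetic: you split approximately via weak almost additivity where the paper uses the exact factorization at the symbol $1$ (available since $\pi^{-1}\{1\}=\{1\}$), and you make explicit the uniformity point $\max_{j\le n}C_j=o(n)$ and the almost-additive versus weakly-almost-additive distinction that the paper leaves implicit.
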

\begin{proof}
Let 
$P:=\{y\in Y: \text { there exists } p\in \N \cup\{0\} \text { such that } \sigma^p_Yy=2^{\infty}\}$.  By Proposition \ref{connectgh}
\ref{13}, using the definitions of $\hat h$ and $\hat{h}_2$, 
(\ref{modify}) holds  for $y\notin P$. It is enough to show that 
we can find $C'_{n}>0$ such that  $\lim_{n\to \infty} (1/n)\log C'_{n}=0$ satisfying
\begin{equation}\label{modi1}
e^{-C'_{n}}\leq h_n(y)/e^{(S_n \hat{h}_2)(y)}\leq e^{C'_{n}}   \text { for } y\in P, \text {every } n\in\N.
\end{equation} 
Case 1. $y=2^{\infty}$. For $k\in \N$, by the proof of Lemma \ref{exception}
we obtain $h_{2k}(y)/e^{(S_{2k}\hat{h}_2)(y)}=(a_1+a_2)/a_1a_2$ and 
$h_{2k+1}(y)/e^{(S_{2k+1}\hat{h}_2)(y)}=2.$

Case 2
$y=12^{\infty}$ or $y=y_1\dots y_{p-1} 12^{\infty}$ for some $p\geq 2$. We consider for the second case. We can study the first case by using a similar proof.
We obtain
\begin{eqnarray*}
\begin{split}
&\frac{h_n(y)}{e^{(S_n \hat{h}_2)(y)}}=\frac{h[y_1\dots y_{p-1}12^{n-p}]}{e^{(S_n\hat{h}_2)(y)}}=
\frac{h[y_1\dots y_{p-1}1]h[12^{n-p}]}{e^{(S_{p-1}\hat{h}_2)(y)}
e^{\hat{h}_2(12^{\infty})}e^{(S_{n-p}\hat{h}_2)(2^{\infty})}}\\
\leq 
&\frac{h[y_1\dots y_{p-1}]e^{A_1}h[12^{n-p}]}{e^{S_{p-1}\hat{h}(y)}
e^{\hat{h}_2(12^{\infty})}e^{S_{n-p}\hat{h}_2(2^{\infty})}}\leq \frac{e^{C_{p-1}+A_1}h[1]h[12^{n-p}]}{(a_1a_2)^{\frac{1}{2}}(a_1a_2)^{\frac{n-p}{2}}},
\end{split}
\end{eqnarray*}
where $A_1>0$ is obtained from the weak almost additivity of $\H$ and  $C_{p-1}$ is defined in (\ref{weakGh}). 
Similarly,
\begin{eqnarray*}
\begin{split}
&\frac{h_n(y)}{e^{(S_n \hat{h}_2)(y)}}=\frac{h[y_1\dots y_{p-1}12^{n-p}]}{e^{S_n\hat{h}_2(y)}}
\geq \frac{e^{-C_{p-1}-A_1}h[1]h[12^{n-p}]}{(a_1a_2)^{\frac{1}{2}}(a_1a_2)^{\frac{n-p}{2}}}
\end{split}
\end{eqnarray*}

\begin{enumerate} [label=(\roman*)]
\item $n-p=2k, k\in \N.$
A simple calculation shows that 
$h[12^{n-p}]/(a_1a_2)^{\frac{n-p}{2}}=(z/a_2)+(w/a_1)$.
\item $n-p=2k+1, k\in \N.$
Then
$h[12^{n-p}]/(a_1a_2)^{\frac{n-p}{2}}=z+w.$
\end{enumerate}
By Case 1 and Case 2, the results follow.
\end{proof}

 \noindent \textbf{[Proof of Theorem \ref{phd1}]}
 We show \ref{g1}. If $\pi\mu$ is Gibbs for $\hat g$ and $\G$, then it is clear that the sequence $\G$ is almost additive. 
For the reverse implication, first note that $\H$ is almost additive by Proposition \ref{connectgh}.
If $M_{22}\vert_{\pi^{-1}(22)} \neq
\begin{pmatrix}
0 & a_1 \\
a_2 & 0
\end{pmatrix}$,  $a_1, a_2>0$, then
Lemmas \ref{2inf} and \ref{12inf} imply that $\hat h$ is defined on $Y$ and Borel measurable. 
In this case set $\hat g=\hat h$.
Otherwise, 
we apply Lemma \ref{exceptionF} and set $\hat g={\hat h}_2.$  
By Corollary \ref {keycoro}, we obtain the first statement. The second statement follows similarly.
Next we show \ref{revise1}. If $M\vert_{\pi^{-1}(22)}$ satisfies neither Lemma \ref{lastlemma} \ref{lastp2} nor  \ref{lastp1}, then we can set  $\hat g=\hat h$ since Lemmas \ref{conti2} and \ref{conti12} imply that $\hat h\in C(Y)$.
If $M\vert_{\pi^{-1}(22)}$ satisfies one of the conditions in Lemma \ref{lastlemma}, 
we apply Lemma \ref{lastlemma} and set $\hat g= \hat {h}_1$.  
Lemma \ref{exception} proves \ref{revise2}.

\section{Examples}\label{examplesagain}
In this section we give examples that illustrate Theorem \ref{go1} (Corollary \ref{aplrelative}) and Theorem \ref{phd1}. Example \ref{simpleE}  proves Proposition \ref{aafiber} \ref{simple2}.  See  Section \ref{nicephd} for the notation.
\begin{ex} \label{simpleE}
Given the factor map $\pi:X\to Y$ between subshifts from 
 \cite[Example 3.2]{S1},  we study the subadditive sequence $\G$ associated to the relative pressure function $P(\sigma_X, \pi, 0)$. Let $X\subset \{1,2,3\}^{\N}$ be the topologically mixing shift of finite type with the transition matrix $A$ given by 
\begin{equation*}
A=\left(
\begin{array}{ccc}
0 & 1 & 1 \\
1 & 1 & 0 \\
1 & 0 & 1
\end{array} \right).
\end{equation*}
and $\pi: X\to Y$ be the one-block factor map determined by $\pi(1)=1$ and $\pi(2)=\pi(3)=2$. Then $Y$ is the topologically mixing shift of finite type with the transition matrix $B$ where 
\begin{equation*}
B=\left(
\begin{array}{ccc}
0 & 1  \\
1 & 1  \\
\end{array} \right).
\end{equation*}
Then $\pi$ is not fiber-wise sub-positive mixing because $23\notin B_2(X)$. 
We consider the image of the Gibbs measure for $f=0$ on $X$. By Proposition \ref{connectgh} $\G$=$\H$. Then 
$$h[12^n1]=h[2^n]=h[2^n1]=h[12^n]=2 \text{ for each }n\in \N,$$   and $\H$ is almost additive. By Theorem \ref{phd1} there exists a continuous function for which $\pi\mu$ is Gibbs for a continuous function and it is given by $\hat h(=g \text{ in } (\ref{defineg}))$. 
\end{ex}

 \begin{ex}\label{ex3}
 \cite[Example 5.1]{Y1} 
 Let $X\subset \{1,2,3\}^{\N}$ be the topologically mixing shift of finite type with the transition matrix $A$ given by 
\begin{equation*}
A=\left(
\begin{array}{ccc}
0 & 1 & 0 \\
1 & 1 & 1 \\
1 & 0 & 1
\end{array} \right)
\end{equation*}
and $\pi: X\to Y$ be the one-block factor map determined by $\pi(1)=1$ and $\pi(2)=\pi(3)=2$. Then $Y$ is the topologically mixing shift of finite type with the transition matrix $B$ where 
\begin{equation*}
B=\left(
\begin{array}{ccc}
0 & 1  \\
1 & 1  \\
\end{array} \right).
\end{equation*}
Then $\pi$ is not fiber-wise sub-positive mixing. Let $\mu$ be the unique Gibbs measure for $f=0$ on $X$. Hence $\G=\H$.
 It is not difficult to show that $\H$ is weakly almost additive by using that $h[2^n]=h[2^n1]=n+1$ and  
$h[12^n]=h[12^n1]=n$ for $n\geq 1$ and setting $C_n=\log(3(n+1))$ in the definition of the weakly almost additive sequence.  Since $\H$ is weakly almost additive, by Theorem \ref{phd1}, $\pi\mu$ is a unique weak Gibbs for
a continuous function and it is given by $\hat h(=g \text{ in } (\ref{defineg}))$. 
 
 \end{ex}

{{Acknowledgements.}}
{The author would like to thank Professor De-Jun Feng for useful discussion.}





\begin{thebibliography}{99}
\bibitem{b2} L.\ Barreira, {\em Nonadditive thermodynamic formalism: equilibrium and Gibbs measures}, {Discrete Contin. Dyn. Syst.}, \textbf{16} (2006), 279--305.


      
    
  \bibitem{BP} M. \ Boyle and  K. Petersen,  {\em Hidden {M}arkov processes in the context of symbolic dynamics},
 {Entropy of hidden {M}arkov processes and connections to
              dynamical systems 
     ({London Math. Soc. Lecture Note Ser.}
    {385})},
{Cambridge Univ. Press, Cambridge},
      {2011}, 5--71.
     
  
 


\bibitem{CFH} Y. L.\ Cao, D.J.\ Feng and  W.\ Huang, {\em The thermodynamic formalism for sub-additive potentials}, { Discrete Contin. Dyn. Syst.}, \textbf{20} (2008), 639--657.

\bibitem{CU1} J. R.\ Chazottes and  E.\ Ugalde,
{\em Projection of Markov measures may be Gibbsian,} {J. Statist. Phys.}, \textbf{111} (2003), 1245--1272.

\bibitem {CU2} J. R.\ Chazottes and  E.\ Ugalde,
{\em On the preservation of {G}ibbsianness under symbol
              amalgamation,} { Entropy of hidden {M}arkov processes and connections to
              dynamical systems (London Math. Soc. Lecture Note Ser. {385})},
{Cambridge Univ. Press, Cambridge}, {2011}, {72--97}.



 \bibitem{Cu} N. \ Cuneo, {\em Additive, almost additive and asymptotically additive potential sequences are equivalent,} { Comm. Math. Phys.},
 \textbf{37} (2020), 2579--2595.

\bibitem{CJ} N. \ Cuneo, V.\  Jaksic, C.-A. Pillet and A. Shirikyan, 
{\em Fluctuation theorem and thermodynamic formalism,} preprint, 2017, {\em arXiv: 1712.05167v2.}




\bibitem{Fe1} D. J.\ Feng,  {\em Equilibrium states for factor maps
between subshifts,} {Adv. Math.}, \textbf{226} (2011), 2470--2502.


\bibitem{FH} D. J.\ Feng and W.\ Huang,
{\em Lyapunov spectrum of asymptotically sub-additive potentials,}
{Comm. Math. Phys.}, \textbf{297}  (2010), 1--43.






              











\bibitem{CE} C. E. \ Holanda and E. \ Santana, {\em A Livsic-type theorem and some regularity properties for nonadditive sequences of potentials}, arXiv: 2307.111322v1

\bibitem{LW}F.\ Ledrappier and  P.\ Walters, {\em A relativised variational principle for continuous transformations,}
{J. London Math. Soc.}, \textbf{16} (1977), 568-576.





\bibitem{K}T.\ Kempton, {\em Factors of Gibbs measures for subshifts of finite type,} { Bull. Lond. Math. Soc.,}  \textbf{43} (2011), 751--764. 


\bibitem{m} A.\ Mummert,  {\em The thermodynamic formalism for almost-additive sequences,} { Discrete Contin. Dyn. Syst.}, \textbf{16} (2006), 435-454.


\bibitem{PS}K. \ Petersen and  S.\ Shin, {\em On the definition of relative pressure for factor maps on
             shifts of finite type,} { Bull. London Math. Soc.},
             \textbf{37} (2005), {601--612}.











\bibitem{Pi} M.\ Piraino, {\em Projections of Gibbs states for H\"older Potentials,}  
   {J. Stat. Phys.,} \textbf{170} (2018), {952--961}.
 \bibitem{Pi2}   M.\ Piraino, {\em Single site factors of {G}ibbs measures,}
   {Nonlinearity,} \textbf{33} (2020), {742--761}.
   
\bibitem{PK} M.\ Pollicott and T.\ Kempton,
{\em Factors of {G}ibbs measures for full shifts,}
{ Entropy of hidden {M}arkov processes and connections to
              dynamical systems (London Math. Soc. Lecture Note Ser.{385}), }
{Cambridge Univ. Press, Cambridge}, {2011}, {246--257}.

\bibitem{S1}S.\ Shin, {\em Measures that maximize weighted entropy for factor maps between subshifts of finite type,} 
{Ergod. Th. \&  Dynam. Sys.}, \textbf{21} (2001), 1249--1272.





\bibitem{V} E.\ Verbitskiy, {\em On factors of {$g$}-measures,} {Indag. Math. (N.S.),} \textbf{22}  (2011), {315--329}.






\bibitem {Y1} Y. \ Yayama, 
{\em Dimension of compact invariant sets of some expanding maps,}  {Ergod. Th. \& Dynam. Sys.,} \textbf{29}  (2009),  281--315.
\bibitem {Y2} Y.\ Yayama, {Existence of a measurable saturated compensation function between subshifts and
its applications}, {\em Ergod. Th. \&  Dynam. Sys.,} {31} (2011), 1563--1589.


\bibitem {Y3} Y.\ Yayama, {\em On factors of Gibbs measures for almost additive potentials},  {Ergod. Th. \&  Dynam. Sys.,} \textbf{36}  (2016), 276--309.

\bibitem {Y4} Y.\ Yayama, {\em Relative pressure functions and their equilibrium stares,}  { Ergod. Th. \&  Dynam. Sys.,} \textbf{43} (2023), 2111--2136.

\bibitem{Yo} J.\ Yoo, {\em On factor maps that send Markov measures to Gibbs
              measure,} {J. Stat. Phys.,} \textbf{141} (2010), 1055--1070.
\end{thebibliography}
\end{document}